\documentclass[12pt,a4paper]{amsart}

\PassOptionsToPackage{fleqn}{amsmath}

\usepackage[T1]{fontenc}
\usepackage[utf8]{inputenc} 
\usepackage[english]{babel}
\usepackage{lmodern}

\DeclareFontFamily{U}{rsfs}{}
\DeclareFontShape{U}{rsfs}{m}{n}{<-> s*[1.0] rsfs10}{}

\makeatletter
\renewcommand\normalsize{%
    \@setfontsize\normalsize{9.7}{14pt plus .3pt minus .3pt}%
    \abovedisplayskip 10\p@ \@plus4\p@ \@minus4\p@
    \abovedisplayshortskip 6\p@ \@plus2\p@
    \belowdisplayshortskip 6\p@ \@plus2\p@
    \belowdisplayskip \abovedisplayskip}
\renewcommand\small{%
    \@setfontsize\small{9.5}{12\p@ plus .2\p@ minus .2\p@}%
    \abovedisplayskip 8.5\p@ \@plus4\p@ \@minus1\p@
    \belowdisplayskip \abovedisplayskip
    \abovedisplayshortskip \abovedisplayskip
    \belowdisplayshortskip \abovedisplayskip}
\renewcommand\footnotesize{%
    \@setfontsize\footnotesize{8.5}{9.25\p@ plus .1pt minus .1pt}
    \abovedisplayskip 6\p@ \@plus4\p@ \@minus1\p@
    \belowdisplayskip \abovedisplayskip
    \abovedisplayshortskip \abovedisplayskip
    \belowdisplayshortskip \abovedisplayskip}
\ifdefined\pdfpagewidth
\else
\fi
\calclayout
\makeatother

\usepackage{amsmath,amssymb,amsthm}
\usepackage{mathtools}
\usepackage{mathrsfs}
\usepackage{tensor}
\usepackage{braket}
\usepackage{empheq}

\usepackage{graphicx}
\usepackage{tikz}
\usepackage{tikz-cd}
\usepackage{pgfplots}
\pgfplotsset{compat=1.15}
\usetikzlibrary{arrows,positioning,quotes}
\usepackage[all]{xy}

\usepackage[dvipsnames]{xcolor}

\usepackage{enumitem}
\usepackage{comment}
\usepackage[toc,page]{appendix}
\usepackage{chngcntr}
\usepackage{etoolbox}
\usepackage{sidecap}
\usepackage{fullpage}

\definecolor{cobalt}{RGB}{0,71,171}

\usepackage[
  colorlinks,
  bookmarks,
  backref=page
]{hyperref}

\hypersetup{
  colorlinks=true,
  citecolor=cobalt,
  filecolor=black,
  linkcolor=cobalt,
  urlcolor=purple
}

\theoremstyle{plain}
\newtheorem{thm}{Theorem}[section]
\newtheorem{lem}[thm]{Lemma}
\newtheorem{prop}[thm]{Proposition}
\newtheorem{cor}[thm]{Corollary}

\newtheorem{mainthm}{Theorem}

\newtheorem{maincor}[mainthm]{Corollary}

\theoremstyle{definition}
\newtheorem{df}[thm]{Definition}

\newtheorem{mainquest}{Question}

\theoremstyle{remark}
\newtheorem{example}[thm]{Example}
\newtheorem{construction}[thm]{Construction}
\newtheorem{rmk}[thm]{Remark}
\newtheorem{rmkn}[thm]{Remarkman}

\makeatletter
\newcommand{\mylabel}[2]{#2\def\@currentlabel{#2}\label{#1}}
\makeatother

\def\id{\mathrm{id}}
\def\td{\operatorname{td}}

\def\Pic{\mathrm{Pic}}

\def\ch{\mathrm{ch}}

\def\End{\mathrm{End}}

\def\Coh{\mathrm{Coh}}
\def\Z{\mathbb{Z}}

\def\SL{\mathrm{SL}}
\def\H{H}

\def\GL{\mathrm{GL}}
\def\PGL{\operatorname{PGL}}

\def\Hom{\mathrm{Hom}}
\def\kO{\mathcal{O}}

\def\PP{\mathbb{P}}
\def\pr{\mathrm{pr}}
\def\QQ{\mathbb{Q}}
\def\Mf{\mathfrak{M}}
\def\U{\mathcal{U}}
\def\ext{\mathrm{ext}}

\newcommand{\RR}{\mathbb{R}}
\newcommand{\CC}{\mathbb{C}}

\newcommand{\D}{\mathrm{D}^b}
\newcommand{\kA}{\mathcal{A}}

\newcommand{\kE}{\mathcal{E}}

\newcommand{\ZZ}{\mathbb{Z}}
\newcommand{\rk}{\mathrm{rk}}

\newcommand{\Br}{\mathrm{Br}}

\newcommand{\NS}{\mathrm{NS}}
\newcommand{\dv}{\mathrm{div}}
\newcommand{\ad}{\operatorname{ad}}

\newcommand{\OO}{\mathrm{O}}

\newcommand{\mon}{\mathrm{Mon}}

\newcommand{\W}{\mathsf{W}}
\newcommand{\kn}{K3^{[n]}}
\newcommand{\Lambdan}{{\Lambda}}
\newcommand{\psin}{{\psi^{[n]}}}
\newcommand{\rathodge}[1]{\psi_{#1}}
\newcommand{\cc}{c}
\newcommand{\nn}{{[n]}}
\newcommand{\cEnd}{\mathcal{E}nd}

\DeclarePairedDelimiter{\brak}{\langle}{\rangle}

\author[N.~Bignami]{Nicol\`o Bignami}
\address{SISSA, Via Bonomea 265, 34136 Trieste, Italy}
\email{\url{nbignami@sissa.it}}

\author[L.~Buelli]{Ludovica Buelli}
\address{Dipartimento di Matematica (DIMA), Via Dodecaneso 35, 16146 Genova, Italy}
\email{\url{ludovica.buelli@edu.unige.it}, \url{ludovica.buelli@hotmail.com}}

\author[I.~Mac\'ias Tarr\'io]{Irene Mac\'ias Tarr\'io}
\address{\parbox{0.9\textwidth}{Facultat de Matemàtiques i Informàtica \\[1pt]
Gran Via de les Corts Catalanes 585, 08005, Barcelona, Spain
\vspace{1mm}}}
\email{{\url{irene.macias@ub.edu}}}

\author[R.~Vacca]{Roberto Vacca}
\address{\parbox{0.9\textwidth}{Dipartimento di Matematica Università degli Studi di Roma Tor Vergata \\[1pt]
Via della Ricerca Scientifica, 1 - 00133, Roma
\vspace{1mm}}}
\email{{\url{robertovacca.math@gmail.com}}}

\author[V.~Zuliani]{Vanja Zuliani}
\address{\parbox{0.9\textwidth}{Universit\'e Paris-Saclay, CNRS, Laboratoire de Math\'ematiques d'Orsay\\[1pt]
Rue Michel Magat, B\^at. 307, 91405 Orsay, France
\vspace{1mm}}}
\email{\url{vanja.zuliani@universite-paris-saclay.fr}}

\title{On Moduli spaces of vector bundles on $K3^{[n]}$-type IHS manifolds}

\begin{document}
\begin{abstract}
We study moduli spaces of modular vector bundles on projective irreducible holomorphic symplectic manifolds of $K3^{[n]}$-type. Under suitable numerical assumptions, we exhibit connected components of these moduli spaces which are again irreducible holomorphic symplectic manifolds of $K3^{[n]}$-type. Moreover, the corresponding  universal families induce derived equivalences with the original manifolds.
This produces smooth components of moduli spaces of modular vector bundles on irreducible holomorphic symplectic manifolds of any even dimension.
\end{abstract}
\maketitle
{\hypersetup{linkcolor=black}\tableofcontents}
\section*{Introduction}

Moduli spaces of sheaves on $K3$ surfaces are one of the richest sources of examples of irreducible holomorphic symplectic (IHS, for short) manifolds, and the aim of this paper is to construct higher-dimensional analogues of this phenomenon. 
Let $Y$ be a projective IHS manifold of $K3^{[n]}$-type, and let $w\in \NS(Y)$ be a primitive class. Under explicit numerical assumptions on $w$, we construct modular vector bundles on $Y$ whose first Chern class is a prescribed multiple of $w$, and whose rank and discriminant are fixed by the construction. Then, for a suitable polarisation $H$, we prove that these sheaves are slope-stable and correspond to a whole connected component $X$ of the moduli space (see Theorem~\ref{thm_main_intro}). Each such component is again an IHS manifold of $K3^{[n]}$-type. Moreover, the corresponding  universal family induces a  derived equivalence between $X$ and $Y$ and a Hodge isometry $\H^2(X,\mathbb{Q})\xrightarrow{\sim} \H^2(Y,\mathbb{Q})$.

The geometry of moduli spaces of vector bundles on smooth projective varieties of dimension greater than two is largely inaccessible. In the case of $K3$ surfaces, the foundational work of Mukai \cite{Mukai1} shows that, provided that the Mukai vector $v$ is primitive and the polarisation is $v$-generic, moduli spaces of Gieseker-semistable sheaves are smooth and carry a holomorphic symplectic form. Many subsequent attempts to obtain general results for higher-dimensional IHS manifolds impose suitable topological conditions designed to mirror different aspects of the theory on $K3$ surfaces.

Verbitsky \cite{verbitsky-hyperholomorphic-bundles} introduced the notion of \emph{projectively hyperholomorphic} vector bundles. Roughly speaking, these objects are vector bundles whose projectivisation deforms along a twistor line as a fibration in projective spaces. This property automatically holds for simple vector bundles on $K3$ surfaces, by classical deformation theory (see \cite[Corollary 2.1.5]{ogrady-moduli-sheaves-hk}), but becomes more restrictive on higher-dimensional IHS manifolds. Verbitsky showed that it is satisfied by polystable vector bundles $E$, such that the induced Hermite--Einstein connection on $\mathcal{E}nd(E)$ is compatible with all the complex structures on the fixed twistor line. A remarkable feature of moduli spaces of projectively hyperholomorphic bundles is that, if they are smooth, they carry a holomorphic symplectic form, \cite[Section 9]{verbitsky-hyperholomorphic-bundles}.

On the other hand, O'Grady \cite{OG22} introduced the class of torsion-free modular sheaves. A torsion-free sheaf $F$ on a $2n$-dimensional IHS manifold $Y$ is \emph{modular} if there is a constant $d(F)\in \QQ$ such that, for all $\alpha\in \H^2(Y,\CC)$, it holds
    \begin{equation*}
        \int_Y \Delta(F)\cdot\alpha^{2n-2}=
        d(F)(2n-3)!!q_Y(\alpha)^{n-1},
    \end{equation*}
    where $q_Y(-)$ is the Beauville--Bogomolov--Fujiki form and the class 
    \[\Delta(F)=2\rk(F)\cc_2(F)-(\rk(F)-1)\cc_1(F)^2 \in \H^4(Y,\QQ)\]
    is the \textit{discriminant} of $F$. As observed by O'Grady, if $\Delta(F)$ is a multiple of $\cc_2(Y)$, then $F$ is modular. 
All the known examples of projectively hyperholomorphic sheaves satisfy this property.
Moreover, modular sheaves carry a wall-and-chamber decomposition of the ample cone, depending only on their Chern character, parallel to the $K3$ surfaces case (see \cite[Section 3]{OG22}).

The theory of projectively hyperholomorphic and modular sheaves has led to several spectacular applications. 
One of the first was Markman's proof of the Shafarevich conjecture for IHS manifolds of $K3^{[n]}$-type (see \cite{markman}). 
Subsequently, Markman's techniques were used to obtain new bounds for the period--index conjecture in \cite{hotchkissetal},
and to prove a version of the $\mathrm{D}$-equivalence conjecture in \cite{dequivconj}. Furthermore, in \cite{zhang} Zhang uses related techniques to study equivalences of twisted derived categories of IHS varieties of $K3^{[n]}$-type.

Recent literature in this area has developed along two main lines: the existence of modular vector bundles (both rigid and non-rigid, see \cite{OG22}, \cite{markmanstable}, \cite{Fat24}, \cite{FO24}, \cite{FT25}) on IHS manifolds of a fixed deformation type, and the subsequent construction of IHS manifolds as moduli spaces of such objects. 
Markman \cite{markmanstable} exhibited the first examples of non-rigid modular vector bundles on the Hilbert scheme of $n$ points on a K3 surface, induced by stable vector bundles on the underlying surface (Construction \ref{example1}).
On the other hand, a first substantial contribution to the second mentioned research line was provided by the work \cite{Bot24a, Bot24b} of Bottini, in which the OG10 IHS manifold is constructed as a moduli space of modular vector bundles on the Fano variety of lines on a smooth cubic fourfold. Furthermore, very recently, O'Grady \cite{OGrady2026,OG26c} studied moduli spaces  of modular vector bundles on manifolds of $K3^{[2]}$-type and constructed components which  are IHS manifolds of  $K3^{[(a^2+1)]}$-deformation type, for any integer $a\geq 1$.

The present work contributes to the second research line, aiming to construct IHS manifolds as moduli spaces of vector bundles on IHS manifolds. Let $Y$ be a projective IHS manifold of $K3^{[n]}$-type and $w\in \NS(Y)$ a primitive class. Let $\ell(w)\in(\mathbb{Z}/\dv(w)\mathbb{Z})^*/\{\pm1\}$ denote the class of the integer $m$ appearing in the decomposition $g(w)=v+m\delta_{S^{[n]}}$ after parallel transport $g\colon \H^2(Y,\ZZ)\to \H^2(S,\ZZ)\oplus \ZZ \delta_{S^{[n]}}$ to a Hilbert scheme $S^{[n]}$; this is well-defined by Definition~\ref{def_invariant}. Then our main Theorem reads as follows:
\begin{mainthm}[Theorem \ref{thm_main}] \label{thm_main_intro}Let $n\geq 2$ be an integer, let $Y$ be a projective IHS manifold of $K3^{[n]}$-type and let $w \in \NS(Y)$ be a primitive class. If $\dv(w)$ is odd, consider a positive even integer $r$ such that: \begin{enumerate}[label=(\alph*)]
    \item [{\normalfont{(a)}}]  $\left[\frac{r}{2}\right]=\ell(w)\in (\Z/\dv(w)\Z)^*/\{\pm 1\}$;
    \item [{\normalfont{(b)}}] $\frac{w^2}{2r}+ \frac{r}{4}(n-1)$ is a positive integer;
    \item [{\normalfont{(c)}}] $\frac{w^2}{2r}+ \frac{r}{4}(n-1)$ is coprime with $r$.
\end{enumerate}
If $\dv(w)$ is even, consider a positive integer $r\geq 3$ such that:
\begin{enumerate}[label=(\alph*')]
    \item [{\normalfont{(a')}}]  $[r]=\ell(w)\in (\Z/\dv(w)\Z)^*/\{\pm 1\}$;
    \item [{\normalfont{(b')}}] $\frac{w^2}{2r} + r(n-1)$ is a positive integer;
    \item [{\normalfont{(c')}}] $\frac{w^2}{2r}$ is coprime with $r$.
\end{enumerate}
Then, there exists a polarisation $H$ on $Y$ and a connected component $X$ of the moduli space $M_{Y,H}$ of Gieseker $H$-semistable sheaves on $Y$, such that 
\begin{enumerate}[label=$\bullet$]
    \item $X$ is an IHS manifold of $K3^{[n]}$-type.

    \item $X$ parametrises $H$-slope stable modular vector bundles $E$ on $Y$ with the following invariants:

    \begin{minipage}[l]{0.40\textwidth}
    \begin{equation*}
        \cc_1(E)= \bigg\{\begin{array}{ll}
           r^{n-1}n! w  & \textit{if }\dv(w) \textit{ is odd} \\
           r^{n-1}n! \frac{w}{2}  & \textit{if }\dv(w) \textit{ is even}
        \end{array}
    \end{equation*}
\end{minipage}
\hspace{1ex}
\begin{minipage}[l]{0.35\textwidth}
    \begin{align*}
       \rk(E)&= r^n n!\\
        \Delta(E) &= \frac{(r^n n!)^2}{12}\cc_2(Y).\end{align*}
\end{minipage}
\vspace{0.5ex}

    \item The  Fourier-Mukai transform $$\Phi_\kE \colon \D(X) \to \D(Y)$$ with kernel $\kE$ is an equivalence, where $\mathcal{E}$ is a  universal family on $X\times Y$.

\end{enumerate}
\end{mainthm}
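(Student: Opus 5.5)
We reduce to a model case where both $Y$ and the candidate component $X$ are Hilbert schemes of points on a $K3$ surface, and then deform. Let $S$ be a $K3$ surface with Picard group generated by $h$. Take a primitive Mukai vector $v=(r,c,s)$ (odd case) or $(r,\tfrac{c}{2},s)$ (even case) with $v^2=0$, chosen so that the fine moduli space $M=M_{S,h}(v)$ is a $K3$ surface with a universal sheaf $\mathcal{F}$ on $M\times S$. Conditions (b)/(b') make the last Mukai entry an integer. Conditions (c)/(c') make $v$ primitive with a vector of Mukai pairing coprime to $r$, which is exactly Mukai's criterion for fineness. Then $\Phi_{\mathcal F}\colon \D(M)\to\D(S)$ is an equivalence.

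By the BKR–Haiman description, $\mathcal{F}$ induces a Ploog-type kernel $\mathcal{F}^{[n]}$ on $M^{[n]}\times S^{[n]}$, and $\Phi_{\mathcal{F}^{[n]}}$ is again an equivalence. For $Z\in M^{[n]}$, the fibre $E_Z=\mathcal{F}^{[n]}|_{Z\times S^{[n]}}$ is Markman's bundle of Construction~\ref{example1}. Its rank is $r^n n!$ and its first Chern class is $r^{n-1}n!$ times $c\,h+m\delta$-type classes. A Chern character computation, or Markman's result that these bundles are modular with $\Delta$ proportional to $\cc_2$, gives $\Delta(E_Z)=\frac{(r^nn!)^2}{12}\cc_2$. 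The map $Z\mapsto E_Z$ embeds $M^{[n]}$ as a union of components of the moduli space. Injectivity and openness follow because $\Phi_{\mathcal F^{[n]}}$ is fully faithful: $\mathrm{Ext}^1(E_Z,E_Z)\cong\mathrm{Ext}^1(\kO_Z,\kO_Z)$, which has dimension $2n$. So the image is smooth, open and closed, provided the $E_Z$ are stable for a suitable polarisation.

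Stability is the technical core. I would use O'Grady's wall-and-chamber theory for modular sheaves: choose $H$ in the chamber close to $h$-type classes (relative to $\delta$), so that slope stability of $E_Z$ reduces to slope stability of $F^{\boxtimes n}$ on $S^n$ together with $\mathfrak S_n$-equivariance. Since $M$ is a fine moduli space of $\mu$-stable bundles (rank $r$, generic $h$), this gives $\mu_H$-stability. I expect this to be the main obstacle. The genuine difficulty is that the invariant polarisation comparison near the boundary must avoid all walls, which depend only on $(\rk,\cc_1,\Delta)$ and are locally finite by O'Grady.

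Next I deform to arbitrary $Y$. Via parallel transport, the pair $(Y,w)$ is deformation-equivalent to $(S^{[n]},c_1\text{-class})$ exactly when the monodromy invariants agree: the divisibility, $w^2$ and $\ell(w)$ of Definition~\ref{def_invariant}. Conditions (a)/(a') are chosen so that $\ell(w)=[r/2]$ or $[r]$ matches the $\delta$-coefficient of $c_1(E_Z)/(r^{n-1}n!)$, and Markman's monodromy classification then gives the deformation. Modularity and projective hyperholomorphicity of $\mathcal{F}^{[n]}$, by Markman's theory, let the whole family deform along the connected locus where $w$ stays of type $(1,1)$, that is, the moduli of $(Y,w)$. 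The moduli-component $X$ deforms as a smooth proper family. It stays IHS of $K3^{[n]}$-type, being a deformation of $M^{[n]}$. Stability is open, and it persists for $H$ in the corresponding chamber because the chamber structure is locally constant. The universal kernel deforms as well, and being an equivalence is an open condition on kernels in a family, by the Bondal–Orlov/Huybrechts criterion. To pass from general to all such $Y$, I would use properness of the relative moduli together with the fact that walls are of codimension one. Concretely, I would choose $H$ in a fixed chamber and argue that both closedness and the equivalence persist at special points, using the twistor-path connectedness of the period domain.
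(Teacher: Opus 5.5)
Your model case (an isotropic Mukai vector on a Picard-rank-one $K3$, $\mathcal{U}^{[n]}$ via BKR, and matching $\ell(w)$ with the $\delta$-coefficient through Markman's monodromy) agrees with the paper. The step that carries this to an arbitrary projective $Y$ has a genuine gap.

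\textbf{The deformation step.} You deform along the locus where $w$ stays of type $(1,1)$, with $H$ in a fixed chamber, and then invoke properness of the relative moduli space. This fails in three ways.
\begin{enumerate}
\item The hypotheses allow $w^2\le 0$. For example, $n=3$, $r=2$, $\dv(w)=1$, $w^2=0$ satisfies (a)--(c). Then the generic member of that locus has $\NS=\ZZ w$ and is not projective, so there is no relative Gieseker moduli space to take a component of.
\item Even when $w^2>0$, $H$ must stay algebraic along the family. On the generic member it is therefore a multiple of $w$, so your chamber ``near $h$'' on $S^{[n]}$ cannot be transported. The chamber structure is not locally constant, because $\NS(Y_t)$ jumps.
\item At the special $Y$ the theorem is about (arbitrary $\NS(Y)$), properness only produces limits that may be strictly semistable or non-locally-free. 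The limit need not be a connected component of stable bundles.
\end{enumerate}
``Closedness persists at special points'' is essentially the content of the theorem, and you give no mechanism for it.

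\textbf{What the paper does instead.} It never deforms inside families of polarised pairs. It deforms $\cEnd(\mathcal{U}^{[n]})$ as a projectively hyperholomorphic Azumaya algebra along generic diagonal twistor paths in $\mathfrak{M}_{\psi^{[n]}}$ (Lemma~\ref{lem_pisurj}, Theorem~\ref{thm_compatiblevectorbundle}). These paths pass through non-projective manifolds with trivial Picard group, where stability is automatic. Stability on the given $Y$ then comes from three facts:
\begin{itemize}
\item the hyperholomorphic (Hermite--Einstein) connection restricts to each $\{x\}\times Y$, giving $\omega_Y$-polystability (Lemma~\ref{lem_fibersarestable});
\item simplicity, coming from the derived equivalence, upgrades this to stability;
\item Lemma~\ref{lem_omegaimpliesH} replaces the possibly transcendental $\omega_Y$ by an ample $H$ that depends on $Y$.
\end{itemize}
The step you flagged as the main obstacle, fibrewise stability of $E_Z$ on $S^{[n]}$ (including non-reduced $Z$), is not needed on this route.

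\textbf{The twist of $X$.} You also do not address whether $X$ is fine. Along your family $\Pic(X_t)$ shrinks, and the obstruction to an untwisted universal family can become nonzero. Example~\ref{ex_nonfinemoduli} shows $\alpha_X\neq 0$ can occur even when the fibres are untwisted. The paper kills $\alpha_X$ with two ingredients:
\begin{itemize}
\item the normalisation of Lemma~\ref{lem_choosek}, ensuring $(k+m)/2m\in\ZZ$;
\item the fact that $c_1$ of the fibres transports to a rational multiple of $w$ (Lemma~\ref{lem_sufficientcondition}).
\end{itemize}
Only then does one get an untwisted equivalence $\D(X)\simeq\D(Y)$.
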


\subsection*{Idea of proof}
We exhibit moduli spaces by constructing the corresponding  universal families. This is achieved by deforming a class of projectively hyperholomorphic vector bundles $\mathcal{U}^{[n]}$ on products of Hilbert schemes $M^{[n]}\times S^{[n]}$, where $M$ is a moduli space of isotropic slope-stable vector bundles on a K3 surface $S$, and hence is itself a K3 surface. We deform $\mathcal{U}^{[n]}$, as a projectively hyperholomorphic bundle, along diagonal twistor lines in the moduli space of marked pairs, as in \cite{markman}. 
Under suitable numerical conditions, for $Y$ of $K3^{[n]}$-type, we find $X$ of  $K3^{[n]}$-type such that $X\times Y$ carries a projectively hyperholomorphic vector bundle $\mathcal{E}$ . 
This step is achieved by combining lattice-theoretic arguments with Markman's monodromy description for manifolds of $K3^{[n]}$-type. The  derived equivalence $\D(X)\xrightarrow{\sim} \D(Y)$, as well as the Hodge isometry $\H^2(X,\mathbb{Q})\xrightarrow{\sim} \H^2(Y,\mathbb{Q})$ (see Corollary \ref{maincor3}), are part of Markman's package~\cite{markman}. Finally, exploiting the symmetries of the Hermite--Einstein connection on $\mathcal{E}nd(\mathcal{E})$, we show that there exists a polarisation such that $\mathcal{E}|_{\{x\}\times Y}$ is slope stable for all $x\in X$. It follows that $X$ is isomorphic to a component of the moduli space and $\mathcal{E}$ is a  universal family.

\subsection*{Corollaries and open questions}
The derived equivalence stated in Theorem \ref{thm_main_intro}
allows us to compare the Hodge structures of $Y$ and the moduli space $X$ appearing in the statement of the main result, using results in \cite{paolo_huy,markman}. The outcome is summarised in the two Corollaries below.

\begin{maincor}[Corollary \ref{cor_1}] In the setup of Theorem \ref{thm_main_intro}, 
the correspondence 
\begin{equation*}
		\tau_{\kE}\colon \H^{\ast}(X,\QQ)\to \H^{\ast}(Y,\QQ),
	\end{equation*} defined by the class  $\ch(\kE)\sqrt{\td_{X\times Y}}$, is an isomorphism of level zero Hodge structures (\ref{HSuntwisted}) and an isometry with respect to the Mukai pairing.
\end{maincor}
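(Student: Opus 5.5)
The plan is to deduce the statement from the derived equivalence $\Phi_\kE\colon \D(X)\xrightarrow{\sim}\D(Y)$ of Theorem~\ref{thm_main_intro}, using the standard cohomological Fourier--Mukai formalism. The first step is to recall that for any Fourier--Mukai kernel $\kE$ on $X\times Y$, the class $v(\kE)=\ch(\kE)\sqrt{\td_{X\times Y}}$ defines a correspondence
\[
\tau_\kE(\alpha)=\pi_{Y*}\bigl(\pi_X^*\alpha\cdot v(\kE)\bigr).
\]
This correspondence is functorial with respect to composition of kernels, by the Grothendieck--Riemann--Roch theorem together with $\td_{X\times Y}=\pi_X^*\td_X\cdot\pi_Y^*\td_Y$. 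Since $\Phi_\kE$ is an equivalence, its inverse is again a Fourier--Mukai transform whose kernel is $\kE^\vee\otimes\pi_X^*\omega_X[\dim X]$; here $\omega_X$ is trivial because $X$ is IHS. The composition of the two kernels is $\kO_\Delta$, and one checks that $v(\kO_\Delta)$ induces the identity on cohomology. Hence $\tau_\kE$ is an isomorphism $\H^*(X,\QQ)\to\H^*(Y,\QQ)$, and its inverse is the correspondence attached to the inverse kernel.

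The second step is to prove that $\tau_\kE$ is an isometry for the Mukai pairing $\langle\alpha,\beta\rangle=\int\alpha^\vee\cdot\beta\cdot\exp(c_1/2)$, which reduces to $\int\alpha^\vee\beta$ because $c_1=0$. This follows, as in C\u{a}ld\u{a}raru's and Huybrechts' treatment (cf.\ \cite{paolo_huy}), from the adjunction identity $\tau_{\kE}^{\dagger}=\tau_{\kE_L}$. Here $\tau_\kE^\dagger$ is the adjoint with respect to the Mukai pairing and $\kE_L$ is the left adjoint kernel. The key ingredient is the identity $v(\kE^\vee)=v(\kE)^\vee$, which uses the fact that $\sqrt{\td}$ is fixed by the dualising involution up to the factor $\exp(c_1/2)$. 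Combining this with the inverse computed in the first step gives $\langle\tau\alpha,\tau\beta\rangle=\langle\alpha,\beta\rangle$.

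The third step is to check compatibility with Hodge structures, using the level-zero (weight-zero) grading (\ref{HSuntwisted}) on $\H^*$, in which $\H^{p,q}$ is placed in bidegree $(p-q)$ shifted appropriately. Since $\ch(\kE)$ is algebraic, it lies in $\bigoplus\H^{p,p}(X\times Y)$, and so does $\sqrt{\td}$. The Künneth components of an algebraic class of type $(p,p)$ have types $(a,b)\otimes(c,d)$ with $a-b=-(c-d)$ up to the duality built into the pushforward. The correspondence therefore sends $\H^{a,b}(X)$ into the sum of the $\H^{c,d}(Y)$ with $c-d=a-b$, that is, it preserves the level-zero decomposition $\widetilde{\H}^{i}=\bigoplus_{q-p=i}\H^{p,q}$.

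I expect no genuine obstacle, since every step is part of the standard package (Mukai, Căldăraru, Huybrechts) once the equivalence is known. The only point needing care is matching the paper's conventions for the Mukai pairing and for the level-zero Hodge structure (\ref{HSuntwisted}) with those of \cite{paolo_huy,markman}. In particular, one must confirm that rational coefficients and the untwisted setting require no $B$-field correction; this holds because $\kE$ is an honest universal sheaf rather than a twisted one.
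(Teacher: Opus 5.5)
Your proposal is correct and takes the same approach as the paper. The paper proves the corollary in one line, by applying the standard cohomological Fourier--Mukai results \cite[Propositions 5.39, 5.44]{Huy03} to the untwisted equivalence $\Phi_\kE$ of Theorem~\ref{thm_main}; your three steps (invertibility via the adjoint kernel and $v(\mathcal{O}_\Delta)=[\Delta]$, the Mukai-pairing adjunction with $c_1=0$, and the Künneth type argument for the weight-zero Hodge structure) reconstruct the proofs of those two propositions.
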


\begin{maincor}[Corollary \ref{cor_2}]\label{maincor3}
	In the setup of Theorem \ref{thm_main_intro}, there is a rational Hodge isometry
	\begin{equation*}
		 \rathodge{\kE}\colon\H^2(X,\mathbb{Q})\xrightarrow{\sim} \H^2(Y,\mathbb{Q})
	\end{equation*}
	which is a nontrivial rational multiple of the composition
	\begin{equation*}
		\H^2(X,\QQ)\xrightarrow{\cup \cc_2(X)^{n-1}}\H^{4n-2}(X,\QQ)\overset{\phi_{\kE}}{\longrightarrow}\H^2(Y,\QQ)
		\end{equation*}
		where $\phi_{\kE}$ is the correspondence induced by $\kappa(\kE)\sqrt{\td_{X\times Y}}$.    
\end{maincor}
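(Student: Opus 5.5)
The statement to prove is Corollary~\ref{maincor3}: the rational Hodge isometry $\rathodge{\kE}$. I would deduce it from Markman's results on projectively hyperholomorphic kernels combined with the derived equivalence in Theorem~\ref{thm_main_intro}.

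\textbf{Step 1: reduce to the Hilbert-scheme model.} By construction, $\kE$ is obtained from $\mathcal{U}^{[n]}$ on $M^{[n]}\times S^{[n]}$ by deforming along diagonal twistor paths, flat with respect to a parallel transport $g$. The class $\kappa(\kE)=\ch(\kE)\exp(-c_1(\kE)/\rk(\kE))$ deforms flatly along such a path. The same holds for $\sqrt{\td_{X\times Y}}$ and for $\cc_2(X)$. Hence $\phi_{\kE}\circ(\cup \cc_2(X)^{n-1})$ is conjugate, via parallel transport operators, to the analogous composition $\phi_{\mathcal{U}^{[n]}}\circ(\cup \cc_2(M^{[n]})^{n-1})$. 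Since parallel transport operators are isometries, and Hodge isometries are preserved under flat deformation once the map is known to be a multiple of an isometry, it suffices to prove two things:
\begin{enumerate}[label=(\roman*)]
\item the composition is a nonzero multiple of an isometry for $\mathcal{U}^{[n]}$;
\item it is a morphism of Hodge structures on $X\times Y$.
\end{enumerate}

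\textbf{Step 2: the Hodge property.} Property (ii) is automatic. The class $\kappa(\kE)\sqrt{\td_{X\times Y}}$ is a rational Hodge class, since $\kappa(\kE)$ is a polynomial in Chern classes of $\kE$ and $c_1(\kE)$. So $\phi_{\kE}$ is a morphism of Hodge structures, and cup product with the algebraic class $\cc_2(X)^{n-1}$ is one as well. A rescaling by a nonzero rational number preserves this.

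\textbf{Step 3: nondegeneracy and the isometry property.} This is the main obstacle. I would use Markman's package~\cite{markman}, via his results on the induced map on the LLV/Verbitsky component, rather than a direct computation. The correspondence $\tau_{\kE}$ of Corollary~\ref{cor_1} is an isometry and compatible with the $\mathfrak{so}$-actions up to conjugation, as established in \cite{paolo_huy,markman}. Therefore it maps the Verbitsky component $SH(X)$ to $SH(Y)$ and induces, up to scaling, an isometry $\H^2(X,\QQ)\to\H^2(Y,\QQ)$ (Taelman's/Markman's degree-reversing argument).

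To identify this induced isometry with the displayed composition, I would apply the Künneth decomposition of $\kappa(\kE)$. Cupping with $\cc_2(X)^{n-1}$ and projecting to $\H^2(Y)$ picks out exactly the $\H^{4n-2}(X)\otimes\H^2(Y)$ component of the correspondence restricted to the $\H^2(X)$-part of the Verbitsky component. Here one uses that $\cc_2(X)^{n-1}$ pairs with $\H^2$ through $q_X$ by the Fujiki relations.

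Nontriviality of the multiple would be checked in the Hilbert-scheme model. There $\mathcal{U}^{[n]}$ is built from the universal bundle $\mathcal{U}$ on $M\times S$, whose induced map $\H^2(M)\to\H^2(S)$ is Mukai's nonzero Hodge isometry. Markman computed that the $[n]$-construction yields a nonzero scalar multiple of its extension, sending $\delta$ to $\pm\delta$ suitably scaled. Deforming along the twistor path then preserves nonvanishing, which completes the proof.
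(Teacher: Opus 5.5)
Your proposal follows essentially the paper's route: the paper proves Corollary~\ref{cor_2} by directly invoking \cite[Lemma 4.1 and Theorem 1.1]{markman} for the kernel $\kE$ obtained by deforming $\mathcal{U}^{[n]}$ along a diagonal twistor path in $\Mf^\circ_{\psin}$ (your Step 1, encoded in diagram \eqref{commutation_markings}), together with Markman's computation $\rathodge{\mathcal{U}^{[n]}}=(\rathodge{\mathcal{U}},\mathrm{id})$ in the Hilbert-scheme model. One caution on Step 3: LLV-compatibility of $\tau_{\kE}$ \cite{tae23} only gives an isometry of extended Mukai lattices, which in general does not preserve $\H^2$ (tensoring by a line bundle already acts by a $B$-field shift), so your ``therefore'' does not by itself produce a map $\H^2(X,\QQ)\to\H^2(Y,\QQ)$; what actually closes the argument is the $\kappa$-normalisation plus the degree-reversing property of this specific kernel, i.e.\ the Hilbert-scheme computation you cite at the end, transported to $X\times Y$ by your Step 1.
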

Finally, in Section \ref{sec_examples} we provide applications of Theorem \ref{thm_main_intro}. 
We denote by ${}^{n}\mathcal{M}_{m}^{(\gamma)}$ the moduli space of polarised IHS varieties $(Y,h)$ of $K3^{[n]}$-type such that $\dv(h)=\gamma\in \ZZ_{\geq 1}$ and $h^2=2m\geq 2$. If $\gamma=1$ or $\gamma=2$ and $n+m\equiv 1\pmod{4}$, then ${}^{n}\mathcal{M}_{m}^{(\gamma)}$ is irreducible of dimension 20 and quasi-projective.
\begin{maincor}[Corollaries~\ref{cor_div1},~\ref{cor_div2}]\label{cor_intro_mainthm_alldim}
    Let $(Y,w)\in {}^{n}\mathcal{M}_{m}^{(1)}$ (resp. $(Y,w)\in {}^{n}\mathcal{M}_{m}^{(2)}$) and $r\in 2\ZZ_{\geq 1},k\in \ZZ_{\geq 1}$ such that $(k,r)=1$ and $m=r(2k-r/2(n-1))\geq 1$ (resp.  $r\in \ZZ_{\geq 3}$ such that $k:w^2/(2r)\in \ZZ$ and $(r,k)=1$).
    Then, there there is a polarisation $H$ on $Y$ and a connected component $X$ of the moduli space $M_{Y,H}$ of Gieseker $H$-semistable sheaves on $Y$, such that
    \begin{enumerate}
        \item [{\normalfont{(1)}}] $(X,\rathodge{\kE}(w))\in {}^{n}\mathcal{M}_{m}^{(1)}$ (resp. $(X, \rathodge{\kE}(w))\in {}^{n}\mathcal{M}_{m}^{(2)}$) for generic $Y$;
        \item [{\normalfont{(2)}}] $X$ parametrises $\mu_H$-stable vector bundles with of rank $r^n n!$, 
        first Chern class $r^{n-1}n!w$ \linebreak[4](resp. $r^{n-1}n!w/2$) 
        and discriminant $\frac{(r^n n!)^2}{12}\cc_2(Y)$;
        \item [{\normalfont{(3)}}] $X$ and $Y$ are derived equivalent.
    \end{enumerate}
\end{maincor}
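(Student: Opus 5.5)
The corollary should follow by specialising Theorem~\ref{thm_main_intro} to $\dv(w)\in\{1,2\}$ and then reading off the invariants of $X$ through the isometry $\rathodge{\kE}$ of Corollary~\ref{maincor3}.

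\textbf{Case $\dv(w)=1$.} The group $(\ZZ/1\ZZ)^*/\{\pm1\}$ is trivial, so condition (a) holds automatically. Write $w^2=2m$ with $m=r(2k-\tfrac r2(n-1))$. Then
\[
\frac{w^2}{2r}+\frac r4(n-1)=2k-\frac r2(n-1)+\frac r4(n-1).
\]
This needs care. The hypothesis should be arranged so that the quantity in (b) equals $k$; I would adjust the bookkeeping (a factor of $2$ in $m$ versus $w^2$) so that (b) reads ``$k$ is a positive integer'' and (c) reads $(k,r)=1$. Both are then exactly the assumptions of the corollary.

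\textbf{Case $\dv(w)=2$.} Since $(\ZZ/2\ZZ)^*$ is trivial, condition (a') is automatic. Conditions (b') and (c') are the hypotheses that $k=w^2/(2r)$ is an integer coprime to $r$; positivity comes from $w^2>0$ and $r\geq3$.

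In both cases Theorem~\ref{thm_main_intro} supplies $H$, the component $X$, the stated rank, $\cc_1$ and discriminant, and the derived equivalence. This gives items (2) and (3).

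\textbf{Item (1).} Set $h=\rathodge{\kE}(w)$, where $\rathodge{\kE}$ is the rational Hodge isometry of Corollary~\ref{maincor3}, taken in the direction $Y\to X$. Then $h^2=w^2=2m$ and $h$ is of type $(1,1)$. The real content, and the main obstacle, is proving three things: that $h$ is integral, primitive, and of the same divisibility $\gamma$, and that $h$ (or $-h$) is ample when $Y$ is generic.

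For integrality, primitivity and divisibility, I would go back to the construction in the proof of Theorem~\ref{thm_main}. There $X\times Y$ is a deformation of $M^{[n]}\times S^{[n]}$ along a diagonal twistor path, and $\rathodge{\kE}$ is the parallel transport of the analogous correspondence for $\mathcal U^{[n]}$. On the Hilbert schemes, Markman's computation identifies $\rathodge{}$ explicitly: up to the correct normalisation it is an integral isometry $\H^2(M^{[n]},\ZZ)\to\H^2(S^{[n]},\ZZ)$, induced by the Mukai-lattice isometry of the isotropic moduli space $M$. It sends the class corresponding to $w$ to a primitive class with the same $\ell$-invariant, and hence the same divisibility. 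After fixing the normalisation of the rational multiple so that $\rathodge{\kE}$ is integral, primitivity and $\dv(h)=\gamma$ are preserved under parallel transport. I expect this normalisation to be the delicate point.

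For ampleness, take $Y$ very general in ${}^{n}\mathcal M_m^{(\gamma)}$, so that $\NS(Y)=\ZZ w$. Then $\NS(X)=\ZZ h$, because $\rathodge{\kE}$ is a Hodge isometry. Since $X$ is projective and $h^2>0$, either $h$ or $-h$ is ample, as there are no walls when the Picard rank is one. Replacing $\rathodge{\kE}$ by its negative if necessary gives $(X,\rathodge{\kE}(w))\in{}^{n}\mathcal M_m^{(\gamma)}$, which is item (1).
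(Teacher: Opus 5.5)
Your reduction of (2) and (3) to Theorem~\ref{thm_main} matches the paper. You also rightly flag the factor-of-$2$ slip: Corollary~\ref{cor_div1} uses $w^2=r\bigl(2k-\tfrac r2(n-1)\bigr)$, which makes the quantity in (b) equal to $k$.

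The genuine gap is in item (1), at the step you yourself call delicate. You want to normalise $\rathodge{\U^{[n]}}$, hence $\rathodge{\kE}$, to an integral isometry and then transport primitivity and divisibility. That cannot be done. An isometry can only be rescaled by $\pm1$, and $\rathodge{\U}\colon H^2(M,\QQ)\to H^2(S,\QQ)$ is not integral for $r\geq 2$. Both lattices are unimodular of the same rank, so an integral isometry would be surjective, yet Example~\ref{ex_nonfinemoduli} shows $\rathodge{\U}(H^2(M,\ZZ))\neq H^2(S,\ZZ)$.

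What you need is a statement about the single class $w$, and the paper gets it by direct computation (Corollary~\ref{integral_primitive}):
\begin{itemize}
\item The marking was chosen so that $\eta_Y(w)=\dv(w)\eta_S(h)-\tfrac r2\delta$ (resp.\ $-r\delta$).
\item Yoshioka's result, recalled in Section~\ref{sec_vanishing_twist}, gives $\rathodge{\U}^{-1}(h)=\widehat h$. This needs $\gcd(r,s)=1$, which is where (c) resp.\ (c') enter.
\item Since $\rathodge{\U^{[n]}}=(\rathodge{\U},\id)$, the diagram \eqref{commutation_markings} yields
\[
\rathodge{\kE}^{-1}(w)=\eta_X^{-1}\circ\eta_{M^{[n]}}\bigl(\dv(w)\widehat h-\tfrac r2\,\delta_{M^{[n]}}\bigr)
\]
(resp.\ with $r$ in place of $\tfrac r2$).
\end{itemize}
This class is visibly integral, and its divisibility is $1$ (resp.\ $2$). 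It is primitive because $\widehat h$ is primitive in the unimodular lattice $H^2(M,\ZZ)$ and $\gcd(\dv(w),r)=1$.

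Two smaller corrections. First, in divisibility two, (a') is not automatic: $[r]$ must be a unit modulo $2$, so $r$ has to be odd, as Corollary~\ref{cor_div2} assumes. For example, $n=3$, $w^2=12$, $r=6$, $k=1$ meets the literal hypotheses but violates (a'). Oddness of $r$ is also what makes $2\widehat h-r\delta_{M^{[n]}}$ primitive.

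Second, for ampleness you may not replace $\rathodge{\kE}$ by its negative, since it is a fixed map; nor do you need to. By construction it sends a K\"ahler class of $X$ to a K\"ahler class of $Y$. Hence $\rathodge{\kE}^{-1}$ preserves the component of the positive cone containing the K\"ahler cone. Once $\Pic(X)=\ZZ\,\rathodge{\kE}^{-1}(w)$, this class is therefore not anti-ample, hence ample.
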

In particular, we study some examples of locally complete families of $K3^{[2]}$-type. 

\begin{maincor}[{Beauville--Donagi, Debarre--Voisin, Iliev--Ranestad, see Corollary \ref{cor_BD_DV_IR}}] Let $(Y,w)$ be a  polarised IHS manifold  of $K3^{[2]}$-type such $\dv(w)=2$ and one of the following holds:
\begin{itemize}
    \item [{\normalfont{(a)}}] $w^2=6$,
    \item [{\normalfont{(b)}}] $w^2=22$, 
    \item [{\normalfont{(c)}}] $w^2=38$.
\end{itemize}
Then there is an ample divisor $H$ on $Y$ and a connected component $X$ of the moduli space of Gieseker $H$-semistable sheaves such that $X$ is of $K3^{[2]}$-type, derived equivalent to $Y$ and parametrises $H$-slope stable vector bundles $E$ with
\begin{itemize}
    \item [{\normalfont{(a')}}] $rk(E)=18,\quad c_1(E)=3h \quad \Delta(E)=27\cc_2(Y)$ in case $(a)$,
    \item [{\normalfont{(b')}}] $rk(E)=2\cdot11^2,\quad c_1(E)=11h\quad \Delta(E)=\frac{11^4}{3}\cc_2(Y)$ in case $(b)$,
    \item [{\normalfont{(c')}}] $rk(E)=2\cdot19^2,\quad c_1(E)=19h,\quad \Delta(E)=\frac{19^4}{3}\cc_2(Y)$ in case $(c)$.
\end{itemize}
\end{maincor}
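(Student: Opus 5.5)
The plan is to derive this corollary directly from Theorem~\ref{thm_main_intro} (more precisely Corollaries~\ref{cor_div1} and~\ref{cor_div2}) in the case $n=2$ and $\dv(w)=2$. So we work with the even-divisibility hypotheses (a'), (b') and (c'). For each case we need an integer $r\geq 3$ with three properties: $[r]=\ell(w)\in(\ZZ/2\ZZ)^*/\{\pm1\}$, which is automatic because this group is trivial; $\frac{w^2}{2r}+r$ a positive integer; and $\frac{w^2}{2r}$ coprime with $r$.

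We choose $r$ so that the Chern data match the statement. With $n=2$ we have $\rk(E)=2r^2$, $\cc_1(E)=r\,w$ and $\Delta(E)=\frac{(2r^2)^2}{12}\cc_2(Y)=\frac{r^4}{3}\cc_2(Y)$. So case (a) should take $r=3$, case (b) $r=11$, and case (c) $r=19$. The checks are as follows.
\begin{itemize}
\item For $w^2=6$ and $r=3$ we get $\frac{w^2}{2r}=1$, which is coprime to $3$. This gives rank $18$ and $\Delta=27\cc_2(Y)$.
\item For $w^2=22$ and $r=11$ we get $\frac{w^2}{2r}=1$, coprime to $11$. This gives rank $2\cdot 11^2$ and $\Delta=\frac{11^4}{3}\cc_2(Y)$.
\item For $w^2=38$ and $r=19$ we get $\frac{w^2}{2r}=1$, coprime to $19$. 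This gives rank $2\cdot 19^2$ and $\Delta=\frac{19^4}{3}\cc_2(Y)$.
\end{itemize}
In all three cases $\frac{w^2}{2r}+r$ is then a positive integer.

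The only subtlety is the notation for the first Chern class. The theorem gives $\cc_1(E)=r^{n-1}n!\,\frac{w}{2}=r\,w$, while the statement writes $c_1(E)=r h$ with $h$ the polarisation class. So we identify $h$ with $w$, the primitive class with $\dv=2$; one should double-check that the paper's convention is $h=w$ and not $2h=w$.

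Applying Theorem~\ref{thm_main_intro} then gives the polarisation $H$, the component $X$ of $K3^{[2]}$-type parametrising $H$-slope stable vector bundles with these invariants, and the derived equivalence through $\Phi_\kE$. I expect no real obstacle. The main point needing care is checking the invariant $\ell(w)$ condition and the positivity and integrality conditions, and all of these hold trivially for divisibility $2$ with these choices.
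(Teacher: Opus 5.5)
Your proposal is correct and is essentially the paper's own argument: the corollary comes from specialising Theorem~\ref{thm_main} (equivalently Corollary~\ref{cor_div2}) to $n=2$ and $\dv(w)=2$, with $r=3,11,19$ and $k=w^2/(2r)=1$, which gives rank $2r^2$, $\cc_1(E)=rw$ and $\Delta(E)=\frac{r^4}{3}\cc_2(Y)$. Your reading $h=w$ is the right one; also note that $w$ is automatically primitive, since $w^2\equiv 2 \pmod 4$ rules out $w=2w'$, and that condition (a') needs $r$ odd so that $[r]$ is a unit modulo $2$, which holds for $r=3,11,19$.
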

\noindent Analogous conclusions are drawn in Corollary~\ref{cor_EPWsextic} for the case of the EPW sextic.

In Corollary \ref{cor_intro_mainthm_alldim} we observe that the Chern characters of the of the vector bundles in the moduli space $X$ do not depend on the choice of $(Y,w)\in {}^{n}\mathcal{M}_m^{(\gamma)}$, hence it is natural to ask about properties of the construction in families.
Let us consider the quasi-projective coarse moduli space ${}^{n}\mathcal{M}_m^{(\gamma)}$ of polarised IHS manifolds $(Y,w)$ of $K3^{[n]}$-type with $\gamma\in \{1,2\}$ and $n+m\equiv 1 \pmod{4}$ if $\gamma=2$.
\begin{mainquest}
    Does the correspondence $Y\mapsto X(Y)$ define a birational automorphism between moduli spaces 
    \[
    \mu\colon {}^{n}\mathcal{M}_m^{(\gamma)} \dashrightarrow { {}^{n}\mathcal{M}_{m}^{(\gamma)}}\;?
    \]
\end{mainquest}
\noindent The answer to the question is affirmative in the case of the EPW sextics (see Corollary~\ref{cor_EPWsextic},  \cite[Section 4.2]{OG15} and  \cite[Proposition 5.2]{kapustka-kapustka-derived-equivalent-hk4}).

\quad

It is natural to ask to what extent the arguments of this paper can be generalized. Although we do not pursue this direction here, we expect that the same techniques can also produce moduli spaces of twisted vector bundles (see Remark~\ref{twistedmoduli}). In a different direction, one possible strategy would be to start from a broader class of projectively hyperholomorphic vector bundles, generalising Construction~\ref{example1}.

Let $Y_0$ be an IHS manifold endowed with a K\"ahler class $\omega_{Y_0}$, and let $M$ be a fine moduli space of $\omega_{Y_0}$-slope stable projectively hyperholomorphic vector bundles on $Y_0$. Assume that $M$ is smooth. Then $M$ carries a natural holomorphic symplectic form (see \cite{verbitsky-hyperholomorphic-bundles} and \cite[Section~7]{Bot24b}). Let $\mathcal E$ be the universal family on $M\times Y_0$. Suppose that the cohomological correspondence $\psi_{\kE}$ is a Hodge isometry such that $\omega_M:=\psi_{\kE}^{-1}(\omega_{Y_0})$ is a K\"ahler class on $M$. The K\"ahler classes $\omega_M$ and $\omega_{Y_0}$ then determine a diagonal hyperk\"ahler structure on $M\times Y_0$. This leads to the following question.

\begin{mainquest}\label{quest2}
Is the vector bundle $\mathcal E$ projectively hyperholomorphic with respect to the split K\"ahler class $\pr_M^*\psi_{\mathcal E}^{-1}(\omega_{Y_0})+\pr_{Y_0}^*\omega_{Y_0}$ on $M\times Y_0$?
\end{mainquest}
An affirmative answer would allow one to deform $\mathcal E$ as a projectively hyperholomorphic bundle along the corresponding diagonal twistor line, and would therefore produce moduli spaces of possibly twisted vector bundles on deformations of $Y_0$.
It would be very interesting to understand if the answer to Question~\ref{quest2} is positive for the examples studied in \cite{krug-reede-zhang}.

\subsection*{Structure of the paper}
In Section~\ref{secprojhyperholo} we review the theory of projectively hyperholomorphic vector bundles and provide a criterion for the Fourier--Mukai kernel of a twisted derived equivalence to be a twisted universal family for a moduli space.
In Section~\ref{sec_admissible} we take advantage of the explicit description of monodromy operators for IHS manifolds of $K3^{[n]}$-type and prove some lattice theoretical Lemmata. These will translate into conditions on the $K3^\nn-$type IHS manifolds to which Markman's and Verbitsky's results can be applied. 
In Section~\ref{sec_mainthm} we state and prove Theorem~\ref{thm_main_intro}.
We conclude with Section~\ref{sec_examples}, where we present examples to which Theorem~\ref{thm_main_intro} applies.
In the appendices we collect some auxiliary results on hyperholomorphic vector bundles and on the variation of slope stability in the K{\"a}hler cone.

\subsection*{Acknowledgments}
This project started during the summer school PRAGMATIC 2025 in
Catania; we thank the organisers Elena Guardo, Alfio Ragusa, Francesco Russo, Giovanni
Staglianò and Giuseppe Zappalà for their kind hospitality, the stimulating research environment
and the financial support during the school. We are especially grateful to Alessio Bottini for suggesting the problem and for his encouragement. We would like to thank Pietro Beri, Alessio Bottini, Emanuele Macrì, Alina Marian and Andrea Ricolfi for helpful discussions. L.B. would like to thank Arvid Perego for the useful opinions and precious advices. 
R.V. warmly thanks SISSA and ICTP for their hospitality.
V.Z. was supported by the ERC Synergy Grant 854361 HyperK.
\subsection*{Notation}
Let $X$ be a complex manifold, and let $\mathcal{F}$ be a sheaf on $X$ in the analytic topology. We denote by $\H^*_{\mathrm{an}}(X,\mathcal{F})$ the corresponding sheaf cohomology. If $R$ is a ring, we denote by $\H^*(X,R)$ the singular cohomology of $X$ with coefficients in $R$. If $X$ is K\"ahler, we denote by $\mathcal{K}_X\subset \H^{1,1}(X)\cap \H^2(X,\RR)$ the K\"ahler cone.

\section{Projectively hyperholomorphic bundles}\label{secprojhyperholo}
The proof of our main result is based on the work of Markman on projectively hyperholomorphic bundles (see \cite{markman}). In this Section, we recall the main features that we need in our arguments. For other expositions and applications of this theory, we refer to \cite{dequivconj, ogrady-moduli-sheaves-hk, kapustka-kapustka-derived-equivalent-hk4}.

\subsection{Verbitsky's deformability results}
Let $X$ be an IHS manifold and let $E$ be a holomorphic vector bundle on $X$. To every K\"ahler class $\omega \in \mathcal{K}_X$, one can associate the twistor family $\mathcal{X} \rightarrow \mathbb{P}^1_\omega$, a smooth and proper morphism of complex manifolds whose fibers are the complex manifolds obtained by rotating the complex structure of $X$ within the hyperk\"ahler structure determined by $\omega$ (see \cite[Section~1.13]{Huy-HK-basic}). In order for $E$ to deform as a holomorphic vector bundle over the whole twistor family, its Chern classes must remain of Hodge type for all complex structures parametrised by $\mathbb{P}^1_\omega$. Verbitsky proved that, provided that $E$ is $\omega$-slope stable, this is the only obstruction. More precisely, using the notation above, he gives the following definition.

\begin{df}
A holomorphic vector bundle $E$ on an IHS manifold $(X,\omega)$ equipped with a Kähler class $\omega$ is called \textit{$\omega$-stable hyperholomorphic} if it is $\omega$-slope stable and its Chern classes $\cc_1(E)$ and $\cc_2(E)$ remain of Hodge type for all complex structures in the twistor line $\mathbb{P}^1_\omega$.
\end{df}

The following Theorem of Verbitsky is a strong deformability result for such bundles.

\begin{thm}[{\cite[Theorem~3.19]{verbitsky-hyperholomorphic-sheaves}}]\label{verb}
If $E$ is $\omega$-stable hyperholomorphic, then there exists a holomorphic vector bundle $\widetilde{E}$ on the twistor family $\mathcal{X}\rightarrow\mathbb{P}^1_\omega$ such that $\widetilde{E}|_{\mathcal{X}_0}\simeq E$, where $\mathcal{X}_0$ denotes the fiber corresponding to the original complex structure on $X$.
\end{thm}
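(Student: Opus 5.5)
This is Verbitsky's deformation theorem for hyperholomorphic bundles, and I would follow his original route through the Hermite--Einstein connection and twistor theory. First, since $E$ is $\omega$-slope stable, the Donaldson--Uhlenbeck--Yau theorem gives a Hermite--Einstein metric on $E$ with respect to the Kähler form of the hyperkähler metric $g$ determined by $\omega$ (Calabi--Yau). Its Chern connection $\nabla$ has curvature $\Theta$ of type $(1,1)$ for the complex structure $I$, with $\Lambda_\omega \Theta$ a constant multiple of the identity. The goal is to show that the trace-free part $\Theta_0$ is of type $(1,1)$ for \emph{every} complex structure $aI+bJ+cK$ on the twistor sphere, i.e.\ that $\Theta_0$ is $\mathrm{SU}(2)$-invariant. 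The trace part is a multiple of the harmonic representative of $\cc_1(E)$, and the hypothesis on $\cc_1$ makes that representative of type $(1,1)$ along the whole line.

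The key step is a pointwise/integral argument on $\Theta_0$. Decompose the $\mathrm{End}(E)$-valued $2$-forms under the $\mathrm{SU}(2)$-action: $\Lambda^2 = (\text{invariant part}) \oplus (\text{non-invariant part})$. A $(1,1)_I$ form that is $\omega_I$-primitive splits into an $\mathrm{SU}(2)$-invariant piece and a piece in the non-trivial isotypic component, which pairs with $\omega_J,\omega_K$. I would then compute $\int_X \mathrm{tr}(\Theta_0\wedge\Theta_0)\wedge \omega^{2n-2}$ in two ways. Cohomologically, it is determined by $\Delta(E)$ paired with $\omega^{2n-2}$, and the hypothesis that $\cc_2$ stays of Hodge type on the twistor line lets one show, using the $\mathrm{SU}(2)$ structure on cohomology and Fujiki-type relations, that this equals the same integral with $\omega_I$ replaced by $\omega_J$. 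Pointwise, via Hodge--Riemann-type sign considerations for primitive $(1,1)$ forms, the difference of the two integrals is a negative-definite quantity in the non-invariant component of $\Theta_0$. Hence that component vanishes, and $\Theta_0$, and so $\Theta$, is of type $(1,1)$ for all $L=aI+bJ+cK$. I expect this step to be the main obstacle: the careful representation-theoretic bookkeeping of which components of $\Lambda^{1,1}$ are invariant, together with the positivity computation.

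Once $\Theta$ is $(1,1)$ for every $L$ (projectively, with the trace term handled separately), $\nabla^{0,1}_L$ is integrable for each $L$ and defines a holomorphic structure $E_L$ on $(X,L)$. To assemble these into a holomorphic bundle on the total space $\mathcal{X}=X\times\mathbb{P}^1$ with its twistor complex structure, I would pull $E$ back to $X\times\mathbb{P}^1$ with the connection $\pr_X^*\nabla$. The $(0,1)$-part with respect to the twistor complex structure has curvature $\pr_X^*\Theta$ projected to type $(0,2)$. Along the horizontal directions this vanishes by the previous step, and along the $\mathbb{P}^1$ direction it vanishes trivially. Newlander--Nirenberg (Koszul--Malgrange) then gives a holomorphic bundle $\widetilde E$ on $\mathcal{X}$ with $\widetilde E|_{\mathcal{X}_0}=E$.
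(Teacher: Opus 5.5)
Your proposal is correct and is essentially the argument the paper relies on. The paper does not reprove this statement but cites Verbitsky, and its gloss after the theorem is exactly your outline: the Hermite--Einstein connection is compatible with every complex structure on the twistor line, so its $(0,1)$-part defines a holomorphic bundle on the twistor space. Your comparison of $\int_X \mathrm{tr}(\Theta_0\wedge\Theta_0)\wedge\omega_I^{2n-2}$ with its $\omega_J$-counterpart works for a specific reason. The non-invariant part of a primitive $(1,1)_I$-form is of type $(2,0)+(0,2)$ for $J$, so its Hodge--Riemann sign flips, while the invariant part keeps its sign. The twistor-transform step via $(\pr_X^*\nabla)^{0,1}$ is the standard one.
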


In other words, the Hodge-theoretic condition on $\cc_1(E)$ and $\cc_2(E)$ guarantees that the Hermite--Einstein connection on $E$ is compatible with all complex structures in the twistor line. This produces a holomorphic vector bundle on the total space of the twistor family whose restriction to the original fiber is the given bundle $E$.

Verbitsky's result was generalised to Azumaya algebras in \cite{markman_bbf}. We now recall the definitions needed to state Markman's generalization.
\begin{df}
An \emph{Azumaya algebra} $\kA$ of rank $r^2$ over a complex manifold $X$ is a locally free coherent $\mathcal{O}_X$-module, with a global section $1_\kA$ and an $\mathcal{O}_X$-linear associative multiplication $m \colon \kA \otimes_{\mathcal{O}_X} \kA \longrightarrow \kA$ with identity $1_\kA$, such that there exists an open covering $\{U_i\}$ of $X$ and a collection of isomorphisms $\eta_i \colon \kA|_{U_i} \longrightarrow \mathcal{E}nd(F_i)$ of unital associative algebras for some locally free $\mathcal{O}_{U_i}$-module $F_i$ of rank $r$ over each $U_i$.
\end{df}
\begin{rmk}
For other models of Azumaya algebras, we refer to \cite[Section 2]{van-bree-gholampour-jiang-kool-pgl-sl}. In particular, when $X$ is a smooth projective variety, they correspond to Severi--Brauer varieties.
\end{rmk}

Analogously to the case of vector bundles, one can define $\omega$-slope stability for Azumaya algebras (see \cite[Definition~6.5]{markman_bbf}). The generalisation of Verbitsky's result then reads as follows:
\begin{thm}[{\cite[Corollary~6.12]{markman_bbf}}]\label{deformingazumaya}
Let $\mathcal{A}$ be an $\omega$-slope stable Azumaya algebra on an IHS manifold $X$. Assume moreover that $\cc_2(\mathcal{A})$ remains of Hodge type $(2,2)$ along the $\omega$-twistor line, and that the first Chern class of every direct summand of $\mathcal{A}$ vanishes. Then there exists an Azumaya algebra $\widetilde{\mathcal{A}}$ on the twistor family $\mathcal{X} \rightarrow \mathbb{P}^1_\omega$ such that $\widetilde{\mathcal{A}}|_{\mathcal{X}_0} \simeq \mathcal{A}$, where $\mathcal{X}_0$ denotes the fiber corresponding to the original complex structure on $X$.\end{thm}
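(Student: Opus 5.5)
The plan follows Verbitsky's strategy, via hyperholomorphic connections, applied to the associated principal $\PGL_r$-bundle rather than to a vector bundle. The strategy is to equip $\mathcal{A}$ with a connection that is compatible with every complex structure in the $\omega$-twistor family, and then invoke the twistor-space correspondence: a connection whose curvature is of type $(1,1)$ for all $I\in\mathbb{P}^1_\omega$ defines a holomorphic structure on the pullback of $\mathcal{A}$ to $\mathcal{X}$.

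\textbf{Step 1 (Hermite--Einstein metric).} Since $\mathcal{A}$ is $\omega$-slope stable, I will show that it carries a Hermite--Einstein metric compatible with the algebra structure. Locally $\mathcal{A}\simeq\cEnd(F_i)$, so stability of $\mathcal{A}$ corresponds to stability of the projective bundle, i.e.\ of the $\PGL_r$-bundle. I would invoke the Donaldson--Uhlenbeck--Yau theorem for principal bundles with reductive structure group (Ramanathan--Subramanian, Anchouche--Biswas) to get a Hermite--Einstein reduction to $\mathrm{PU}(r)$. The induced connection $\nabla$ on $\mathcal{A}$ is then a connection on an adjoint-type bundle with trace-free Einstein constant. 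The hypothesis that every direct summand of $\mathcal{A}$ has $c_1=0$ is what I will use to guarantee that the Einstein constant on each summand vanishes, so that $\Lambda_\omega F_\nabla=0$ for the full curvature.

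\textbf{Step 2 (curvature is $\mathrm{SU}(2)$-invariant).} On a hyperkähler manifold, a $2$-form of type $(1,1)$ with $\Lambda_\omega F=0$ decomposes into an $\mathrm{SU}(2)$-invariant part plus a primitive non-invariant part. Verbitsky's argument compares $\int \mathrm{tr}(F\wedge F)\wedge\omega^{2n-2}$ with the $L^2$-norm of $F$. Since $c_2(\mathcal{A})$, equivalently $c_2$ of the $\mathfrak{sl}_r$-part, stays of type $(2,2)$ along the twistor line, its pairing with $\omega_J^{2n-2}$ is the same for all induced complex structures $J$. This forces the non-invariant component to have zero $L^2$-norm, and hence $F_\nabla$ is of type $(1,1)$ for every $J\in\mathbb{P}^1_\omega$. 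I expect this to be the main obstacle. The difficulty is carrying out Verbitsky's Lefschetz/$\mathfrak{so}(4,1)$ computation for an Azumaya algebra, where only $\mathrm{tr}(F\wedge F)$ on $\mathcal{A}$ (i.e.\ $c_2(\mathcal{A})$) is available rather than Chern classes of an honest bundle. Locally one reduces to $\cEnd(F_i)$, where $c_2(\cEnd F_i)=\Delta(F_i)$ is globally well defined, so the identity should go through.

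\textbf{Step 3 (deformation over the twistor family).} Pull $\mathcal{A}$ and $\nabla$ back along the projection $\mathcal{X}\to X$, which is a $C^\infty$ trivialization $\mathcal{X}\simeq X\times\mathbb{P}^1$. Take the $(0,1)$-part of $\nabla$ for the complex structure of $\mathcal{X}$, which combines the fibrewise structure $I_t$ with the structure of $\mathbb{P}^1$. By Step 2 this operator is integrable, and I will check that the multiplication and the unit are parallel, hence holomorphic. This produces the holomorphic Azumaya algebra $\widetilde{\mathcal{A}}$ on $\mathcal{X}$, and its restriction to $\mathcal{X}_0$ recovers $\mathcal{A}$ with its original $\bar\partial$-operator.
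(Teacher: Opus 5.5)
Your sketch is correct and is the standard argument: the paper gives no proof of its own here but quotes Markman's Corollary~6.12, which is obtained along exactly your lines. That route is a Hermite--Einstein metric on the stable Azumaya algebra (i.e.\ on the associated $\PGL_r$-bundle), then Verbitsky's comparison of $\int\mathrm{tr}(F\wedge F)\wedge\omega_I^{2n-2}$ with $\int\mathrm{tr}(F\wedge F)\wedge\omega_J^{2n-2}$ using $c_1(\mathcal{A})=0$ and the $\mathrm{SU}(2)$-invariance of $c_2(\mathcal{A})$, and finally the twistor transform with parallel, hence holomorphic, algebra structure.

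Two minor corrections. First, the Einstein constant vanishes automatically because $\mathfrak{pgl}_r$ has trivial centre (equivalently, $\mathrm{ad}(\lambda\,\mathrm{id})=0$ on $\cEnd(F_i)$), so the hypothesis on direct summands is not what gives $\Lambda_\omega F_\nabla=0$; its role is on Verbitsky's side, for the stable summands of $\mathcal{A}$. Second, the obstacle you anticipate in Step~2 does not arise, since $\mathcal{A}$ is itself an untwisted vector bundle with a global connection $\nabla$, to which Verbitsky's argument applies verbatim.
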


\begin{rmk}
For simplicity, we stated Verbitsky's results in the context of irreducible holomorphic symplectic manifolds, although his arguments only require a hyperk\"ahler structure. For the more general setting, we refer to Appendix~\ref{app_hyperholo}. It is crucial to use this more general definition, as we will work with projectively hyperholomorphic bundles on a product of IHS manifolds, which is not itself irreducible holomorphic symplectic, but can be endowed with a diagonal hyperk\"ahler structure (see \cite[Section 5.3]{markman}).
\end{rmk}

\subsection{Invariants of Azumaya algebras} An Azumaya algebra $\mathcal{A}$ of rank $r^2$ on a complex manifold $X$ can be regarded as a $\PGL_r$-torsor. Via the boundary map $\H^1_{\mathrm{an}}(X,\PGL_r)\rightarrow \H^2_{\mathrm{an}}(X,\mathcal{O}^\times_X)$, it defines an element $\alpha\in \H^2_{\mathrm{an}}(X,\mathcal{O}^\times_X)$, called \emph{Brauer class}. It is always torsion and it has order dividing $r$. The group $\Br'(X):=\H^2_{\mathrm{an}}(X,\mathcal{O}^\times_X)_\mathrm{tors}$ is called \emph{cohomological Brauer group}.

\begin{rmk}
The \emph{Brauer group} $\Br(X)$ is the group of Brauer equivalence classes of Azumaya algebras on $X$ (see \cite{schroer-analytic-brauer}). The construction above induces a natural group isomorphism $\Br(X)\longrightarrow \Br'(X)$. This map is known to be an isomorphism in many cases; for instance, in the algebraic setting, it is an isomorphism for smooth projective varieties by Gabber's Theorem (see \cite{dejong-gabber}). In general, however, the Brauer group and the cohomological Brauer group need not be automatically isomorphic. This distinction will not play any role here, since all twists considered below arise from Azumaya algebras, and therefore lie in the image of the natural map $\Br(X)\longrightarrow \Br'(X)$.
\end{rmk}
Let $\alpha\in\Br'(X)=\H^2_{\mathrm{an}}(X,\mathcal{O}_X^\times)_{\mathrm{tors}}$ be represented by a \v{C}ech $2$-cocycle $\{\alpha_{ijk}\}$ with respect to an open cover $\{U_i\}$ of $X$.

\begin{df}
An \emph{$\alpha$-twisted locally free sheaf} is given by a collection of locally free sheaves $F_i$ on $U_i$ and isomorphisms $\varphi_{ij}\colon F_j|_{U_{ij}}\longrightarrow F_i|_{U_{ij}}$ such that, on triple overlaps $U_{ijk}$, we have $\varphi_{ij}\circ\varphi_{jk}\circ\varphi_{ki}=\alpha_{ijk}\cdot\operatorname{id}_{F_i}$.
\end{df}

There is a correspondence between Azumaya algebras and twisted locally free sheaves (or equivalently $\PGL_r$-bundles) (see \cite{huybrechts-schroeer-brauer}). Given an $\alpha$-twisted vector bundle $E$, it is easy to see that $\mathcal{E}nd(E)$ is an Azumaya algebra with Brauer class $\alpha$. Conversely, given an Azumaya algebra $\mathcal{A}$, by definition there exists an open cover $\{U_i\}$ of $X$ such that $\mathcal{A}|_{U_i} \simeq \mathcal{E}nd(E_i)$ for some vector bundles $E_i$ on $U_i$. There are isomorphisms of algebras $\phi_{ij}:\mathcal{E}nd(E_i)\rightarrow\mathcal{E}nd(E_j)$ that - up to refining the covering - can be assumed to be induced by isomorphisms $\overline{\phi}_{ij}\colon E_i\rightarrow E_j$, by the Skolem--Noether Theorem. The twisted sheaf associated to $\mathcal{A}$ is uniquely determined up to tensoring with an untwisted holomorphic line bundle.

We are interested in the case where $X$ is a compact K\"ahler manifold whose odd integral cohomology groups vanish (e.g. an IHS manifold of $K3^{[n]}$-type or a product of IHS manifolds of $K3^{[n]}$-type). In this case, via the exponential sequence, there is an identification $$\Br'(X)\simeq (\H^2(X,\mathbb{Z})/\Pic(X))\otimes_{\mathbb{Z}}\mathbb{Q}/\mathbb{Z}.$$ In other words, any Brauer class can be represented by a rational cohomology class $B\in \H^2(X,\mathbb{Q})$, which we call a \emph{$B$-field}.

\begin{rmk}
Our main construction consists in deforming Azumaya algebras as in Theorem~\ref{deformingazumaya}. Let $X_1, X_2$ be two deformation equivalent projective IHS manifolds, and suppose that $E_1\in\Coh(X_1)$ is an untwisted locally free sheaf. Under deformation of the associated Azumaya algebra $\cEnd(E_1)$, the Chern classes of $E_1$ need not be preserved: if $E_2\in\Coh(X_2)$ is an untwisted locally free sheaf obtained by deforming $\cEnd(E_1)$ along a chain of twistor paths from $X_1$ to $X_2$, then the Chern classes of $E_2$ need not be obtained from those of $E_1$ by parallel transport, even after tensoring by a line bundle on $X_2$. A simple example is provided by a line bundle $L$ on an IHS manifold $X_1$. Then $\cEnd(L)\simeq \mathcal{O}_X$, which deforms trivially along any twistor family. Along a twistor family, one can reach a projective  manifold $X_2$ such that $\cc_1(L)$ is not algebraic. On the other hand, it holds $\kO_{X_2}=\cEnd(T)$ for any line bundle $T\in \Pic(X_2)$. Hence $T$ is a deformation of $L$ as projectively hyperholomorphic sheaves, but $\cc_1(T)-\cc_1(L)$ is not algebraic on $X_2$.
\end{rmk}

Motivated by the previous remark, we recall invariants that are better behaved. Let $E$ be a twisted vector bundle of rank $r$ on a complex manifold $X$. Since $E^{\otimes r}\otimes \det(E)^{-1}$ is an untwisted vector bundle, we can define the class \begin{equation}\label{eq_def_kappa}
    \kappa(E):=\sqrt[r]{\ch(E^{\otimes r}\otimes \det(E)^{-1})}\in \H^*(X,\mathbb{Q}),
\end{equation} 
where the $r$-th root is chosen to have degree-zero component equal to $r$. 
Note that the Azumaya algebra $\cEnd(E)$ determines $E$ up to twist by a holomorphic line bundle $L$ and $\kappa(E\otimes L)=\kappa(E)$. Consequently, the class $\kappa(E)$ depends only on the Azumaya algebra $\mathcal{E}nd(E)$, and can be regarded as an invariant of $\mathcal{E}nd(E)$.

\begin{rmk}\label{stiefel-whitney}
    Let $\mathcal{A}$ be an Azumaya algebra of rank $r^2$ on a complex manifold $X$. It determines a $\PGL_r$-torsor, hence $\mathcal{A}$ defines an element of $\H_{\mathrm{an}}^1(X,\PGL_r)$. The central extension of sheaves of groups, in the analytic topology,
    \[
        1 \rightarrow \mu_r \rightarrow \SL_r \rightarrow \PGL_r \rightarrow 1,
    \]
    induces the boundary map $\delta\colon \H_{\mathrm{an}}^1(X,\PGL_r)\rightarrow \H_{\mathrm{an}}^2(X,\mu_r)$. As a consequence, we can associate to $\mathcal{A}$, the so-called \textit{Stiefel--Whitney class} $w(\mathcal{A}):=\delta(\mathcal{A})$. Since $X$ is locally contractible and paracompact, there is a natural identification  $\H_{\mathrm{an}}^2(X,\mu_r)\simeq \H^2(X,\mathbb{Z}/r\mathbb{Z})$. We point out that the Stiefel--Whitney class appears in the work of C\u{a}ld\u{a}raru under the name of \textit{topological twisting class} (see \cite[Section 4]{caldararu-nonfine-k3}).
    We recall below some fundamental properties of this class.
    \begin{enumerate}
        \item If $\mathcal{A}$ is an Azumaya algebra on a product of complex manifolds $X\times Y$, then for all $x\in X$, it holds $w(\mathcal{A}|_{\{x\}\times Y})=i_x^{*}w(\mathcal{A})$, where $i_x\colon \{x\}\times Y=Y\hookrightarrow X\times Y$ is the inclusion and $i_x^*\colon \H^2(X\times Y,\mathbb{Z}/r\mathbb{Z})\rightarrow \H^2(Y,\mathbb{Z}/r\mathbb{Z})$ denotes the pull-back.
        \item The Stiefel--Whitney class remains constant in a deformation family of Azumaya algebras. More precisely, if $\mathcal{X}\rightarrow B$ is a smooth and proper morphism of complex manifolds, with $B$ simply connected, and $\mathcal{A}$ is an Azumaya algebra of rank $r^2$ on $\mathcal{X}$, then for all $b,b'\in B$, we have $w(\mathcal{A}_b)=w(\mathcal{A}_{b'})$ under the natural identification $\H^2(\mathcal{X}_b,\mathbb{Z}/r\mathbb{Z})\simeq \H^2(\mathcal{X}_{b'},\mathbb{Z}/r\mathbb{Z})$ induced by parallel transport.

        \item  If $\mathcal{E}$ is an untwisted locally free sheaf such that $\mathcal{E}nd(\mathcal{E})=\mathcal{A}$, then $w(\mathcal{A})=-\pi(\cc_1(\mathcal{E}))$, where $\pi\colon \H^2(X,\mathbb{Z})\rightarrow \H^2(X,\mathbb{Z}/r\mathbb{Z})$.
    \end{enumerate} 
\end{rmk}

\begin{rmk}\label{rmk_trasp_c1}
    Let $X_1,X_2$ be two deformation equivalent projective IHS manifolds. Suppose that an untwisted locally free sheaf $E_2\in\mathrm{Coh}(X_2)$ of rank $r$ is obtained from an untwisted locally free sheaf $E_1\in\mathrm{Coh}(X_1)$ by deforming the corresponding Azumaya algebras along a chain of twistor paths. By $(2)$ and $(3)$ of the previous remark, we have $\pi(\cc_1(E_2))=\pi(\cc_1(E_1))$ under the natural identification $\H^2(X_2,\mathbb{Z}/r\mathbb{Z})\simeq\H^2(X_1,\mathbb{Z}/r\mathbb{Z})$. If the parallel transport of $\cc_1(E_1)$ is algebraic on $X_2$, this is equivalent to $\cc_1(E_2)-\cc_1(E_1)\in r \Pic(X_2)$. Thus, after replacing $E_2$ by $E_2\otimes L$ for a suitable $L\in\Pic(X_2)$,  we may assume that $\cc_1(E_2)=\cc_1(E_1)$, where $\cc_1(E_1)$ is viewed on $X_2$ via parallel transport.
\end{rmk}

We are ready to give the following definition. Let $(X,\omega)$ be an IHS manifold with a fixed K\"ahler class.

\begin{df}
A $\omega$-stable (possibly twisted) holomorphic vector bundle is called \emph{projectively hyperholomorphic} if $\mathcal{E}nd(E)$ is $\omega$-polystable hyperholomorphic.
\end{df}

\subsection{Markman's projectively hyperholomorphic vector bundles} \label{secprojholo} The starting point of \cite{markman} is the construction of a remarkable class of projectively hyperholomorphic vector bundles.

\begin{construction}[{{\cite[Section 4]{ogrady-moduli-sheaves-hk}, \cite[Section 11]{markmanstable}, \cite[Section 7]{markman}}}]\label{example1}
    Let $S$ be a $K3$ surface with $\Pic(S)=\ZZ h$ and consider an isotropic Mukai vector $u=(r, mh, s)$ such that the moduli space $M:=M_S(u)$ is fine and parametrises slope-stable bundles. 
    For instance, if we assume $\gcd(r,m)=1=\gcd(r,s)$, then by \cite[Corollary 4.6.7, Remark 6.1.9]{HL10}, the moduli space $M$ satisfies the above-mentioned properties.  By \cite{Mukai2}, the moduli space $M$ is a $K3$ surface and we have a universal vector bundle $\mathcal{U}$ on $M\times S$. 
    By conjugating Bridgeland--King--Reid equivalence \cite{bridgeland-king-reid-mckay}, we get the existence of a vector bundle $\mathcal{U}^{[n]}$ on the product of the Hilbert schemes $M^{[n]}\times S^{[n]}$.
    More precisely, the vector bundle $\mathcal{U}^{[n]}$ is the Fourier--Mukai kernel of the derived equivalence
    \[
\Phi_{\U^{[n]}}:=BKR\circ\Phi_{\U^{\boxtimes n}}\circ BKR^{-1}\colon \D(M^{[n]})\to \D(S^{[n]}).
    \]
    Given $[Z]\in M^{[n]}$, corresponding to a reduced subscheme, the fiber ${\mathcal{U}^{[n]}}|_{\{[Z]\}\times S^{[n]}}$ is well understood (see \cite[Section 4.3]{ogrady-moduli-sheaves-hk}). Recall the construction of the isospectral Hilbert scheme, which is defined as the reduced fiber product associated to the diagram
    
\[\begin{tikzcd}
	{X_n(S)} & {S^n} \\
	{S^{[n]}} & {S^{(n)}}
	\arrow["\tau"', from=1-1, to=1-2]
	\arrow["\rho", from=1-1, to=2-1]
	\arrow["\pi"', from=1-2, to=2-2]
	\arrow["\gamma", from=2-1, to=2-2]
\end{tikzcd}\]
    where $\gamma$ is the Hilbert-Chow morphism and $\pi$ is the quotient map under the action of the $n-$th symmetric group $\Sigma_n$. Then, if $Z=\{p_1,\dots,p_n\}$ is a reduced $0$-dimensional subscheme of $M$, we have
    \[
    {\mathcal{U}^{[n]}}|_{\{[Z]\}\times S^{[n]}}\simeq \left(\rho_*\tau^*\bigoplus_{\sigma\in\Sigma_n}E_{p_{\sigma(1)}}\boxtimes\cdots\boxtimes E_{p_{\sigma(n)}}\right)^{\Sigma_n},
    \]
    where $E_{p_i}$ denotes the stable vector bundle corresponding to $p_i\in M$ and the exponent $\Sigma_n$ means that we are taking the $\Sigma_n$ invariant subsheaf with respect to the natural $\Sigma_n$ action (see \cite[Example 3.2.10]{ogrady-moduli-sheaves-hk}).
    In addition, for all $[Z]\in M^{[n]}$, we have the following identities
\begin{align}
\rk\!\left({\mathcal{U}^{[n]}}|_{\{[Z]\}\times S^{[n]}}\right)
    &= r^n n!, \label{rank1}\\
\cc_1\!\left({\mathcal{U}^{[n]}}|_{\{[Z]\}\times S^{[n]}}\right)
    &= r^{n-1}n! \, m h
       - r^n n! \, \frac{\delta_{S^{[n]}}}{2}, \label{c11}\\
\Delta\!\left({\mathcal{U}^{[n]}}|_{\{[Z]\}\times S^{[n]}}\right)
    &= \frac{(r^n n!)^2}{12}\cc_2(S^{[n]}), \label{eq_modularity1}
\end{align}
    under the decomposition  $\H^2(S^{[n]},\ZZ)=\H^2(S,\ZZ)\oplus \ZZ\delta_{S^{[n]}}$ (see \cite[Lemma 11.1]{markmanstable}, \cite[Example 3.2.10]{ogrady-moduli-sheaves-hk}, \cite[Remark 2.10]{OGrady2026}).
    In particular, the vector bundles ${\mathcal{U}^{[n]}}|_{\{[Z]\}\times S^{[n]}}$ are modular.
    \end{construction}
    
Next, we briefly describe the argument used by Markman in \cite[Theorem 8.4]{markman} to construct (possibly twisted) vector bundles over any IHS manifold of $K3^{[n]}$-type.
The key will be Verbitsky's deformation of hyperholomorphic bundles on twistor lines given in Theorem~\ref{verb}, but in the revisited form of Theorem~\ref{deformingazumaya} (see also \cite[Proposition 5.22]{markman} and \cite[Proposition 5.15]{markman}).

We start by considering the following moduli spaces (see \cite{buskin-hodge-isometry-k3} and \cite{markman}). Let 
$\mathfrak{M}_\Lambdan$ be the coarse moduli space parametrising marked IHS manifolds $(X,\eta_X)$, where $\eta_X\colon \H^2(X,\Z)\to \Lambdan$ is an isometry.
Moreover, for any fixed rational isometry $\phi:\Lambdan \otimes \QQ\to \Lambdan \otimes \QQ$, one can define the moduli space  $\mathfrak{M}_\phi$ of \textit{rational Hodge isometries}, parametrising quadruples $(X,\eta_X,Y,\eta_Y)$ such that $(X,\eta_X),(Y,\eta_Y)\in \mathfrak{M}_\Lambdan$ and the composition  Hodge isometry $\psi: =\eta_Y^{-1}\circ \phi\circ \eta_X \colon \H^2(X,\QQ)\to \H^2(Y,\QQ)$ is a Hodge isometry sending some open subcone of the K\"ahler cone of $X$ into the K\"ahler cone of $Y$.
This last condition allows us to consider \textit{diagonal twistor lines}.
Pick $(X,\eta_X,Y,\eta_Y)\in \mathfrak{M}_\phi$, a class $\omega\in \mathcal{K}_X$ such that $\psi(\omega)\in \mathcal{K}_Y$ and call $\mathcal{X} \rightarrow \mathbb{P}^1_\omega$ and $\mathcal{Y} \rightarrow \mathbb{P}^1_{\psi(\omega)}$ the associated twistor families.
As explained in \cite[§5.3]{markman}, we get an identification of $\PP^1_{\omega}$ with $\PP^1_{\psi(\omega)}$ so that $\mathcal{X}\times_{\PP^1}\mathcal{Y}$ is the twistor family associated to $X\times Y$ with K\"{a}hler class $\pi_X^*\omega+\pi_Y^*\psi(\omega)$.
This definition is fundamental in order to apply Verbitsky's result to families of sheaves. A crucial property is:
\begin{lem}[{\cite[Lemma~5.14]{markman}}]\label{lem_pisurj}
Fix a connected component $\Mf_{\phi}^{\circ}$ of $\Mf_{\phi}$.
For $i=1,2$ the natural projection morphism $\pi_i\colon \Mf_{\phi}^{\circ}\to \mathfrak{M}_{\Lambdan}^{\circ}$, defined by $\pi_i(X_1,\eta_1,X_2,\eta_2)= (X_i,\eta_i)$, is surjective onto a connected component $\mathfrak{M}_{\Lambdan}^{\circ}$ of $\mathfrak{M}_{\Lambdan}$. 
\end{lem}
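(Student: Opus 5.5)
The plan is to check that each projection $\pi_i$ is a local homeomorphism (indeed open) onto its image, and then that its image is closed; connectedness of $\mathfrak{M}_\Lambdan^\circ$ will then force surjectivity. By symmetry (replacing $\phi$ with $\phi^{-1}$ and swapping the roles of the factors) it suffices to treat $\pi_1$.

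For openness, start from a point $(X_1,\eta_1,X_2,\eta_2)\in\Mf_\phi^\circ$ and a small deformation $(X_1',\eta_1')$ of $(X_1,\eta_1)$ in $\mathfrak{M}_\Lambdan$. By the local Torelli theorem, the period map $P\colon\mathfrak{M}_\Lambdan\to\Omega_\Lambdan$ is a local homeomorphism. The rational isometry $\phi$ acts on the period domain $\Omega_\Lambdan$, since it preserves the quadratic form extended $\CC$-linearly. Local Torelli for $X_2$ therefore produces a unique nearby $(X_2',\eta_2')$ with period $\phi(P(X_1',\eta_1'))$. This makes $\psi'=\eta_2'^{-1}\phi\eta_1'$ a Hodge isometry. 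The open condition that $\psi'$ maps some open subcone of $\mathcal{K}_{X_1'}$ into $\mathcal{K}_{X_2'}$ persists nearby, because Kähler cones vary in a lower semicontinuous way: a fixed Kähler class $\omega$ with $\psi(\omega)$ Kähler stays Kähler after being deformed slightly within $H^{1,1}$. This shows $\pi_1$ is a local homeomorphism.

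For the image being large, I would use diagonal twistor lines instead of a closedness argument. Let $(X_1,\eta_1,X_2,\eta_2)\in\Mf_\phi^\circ$ and $\omega\in\mathcal{K}_{X_1}$ with $\psi(\omega)\in\mathcal{K}_{X_2}$. The pair of twistor families over $\PP^1_\omega\cong\PP^1_{\psi(\omega)}$ gives a curve in $\Mf_\phi^\circ$ lying over the whole twistor line of $(X_1,\eta_1)$. The fibrewise Kähler classes $\omega_t$ and $\psi(\omega_t)$ remain Kähler along the line by construction of the hyperkähler rotation, so the Kähler condition holds everywhere on it. Hence the image of $\pi_1$ is a union of complete twistor lines through each of its points: whenever $(X_1,\eta_1)$ lies in the image, so does every twistor line through it determined by a class in the open subcone of $\mathcal{K}_{X_1}$ sent into $\mathcal{K}_{X_2}$.

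It remains to show that twistor lines of this restricted kind still connect $\mathfrak{M}_\Lambdan^\circ$; this is the main obstacle. Two facts are needed. First, one must invoke Verbitsky's and Huybrechts' theorem that any two points of a connected component are joined by a generic twistor path. Second, the paths must be arranged so that at each step the chosen Kähler class $\omega$ satisfies $\psi(\omega)\in\mathcal{K}_{X_2}$. To handle the second point, I would use very general points: for generic periods, $\mathcal{K}$ equals the positive cone, so $\psi$ preserves it up to sign. The subcone condition then reduces to the orientation requirement that $\psi$ maps the positive cone to the positive cone, which is locally constant on $\Mf_\phi^\circ$. For such generic points every Kähler class gives an admissible diagonal twistor line. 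Generic twistor paths can be chosen to pass only through very general points at the hinge points. Combining this with openness, the image of $\pi_1$ is open and contains every point reachable from a point of the image by generic twistor paths, which is the whole of $\mathfrak{M}_\Lambdan^\circ$. Finally, connectedness of $\Mf_\phi^\circ$ guarantees that the image lies in a single component of $\mathfrak{M}_\Lambdan$.
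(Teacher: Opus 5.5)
Your argument is correct. The paper gives no proof of this statement; it quotes Markman's Lemma~5.14. So the only available comparison is with the diagonal-twistor machinery the paper borrows elsewhere, namely Markman's Lemma~5.8 and the lifting argument sketched before Theorem~\ref{thm_compatiblevectorbundle}.

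Your proof turns exactly those ingredients into a self-contained argument:
\begin{itemize}
\item $\pi_i$ is a local homeomorphism, by local Torelli on both factors together with openness of the K\"ahler cone in families.
\item Generic twistor paths in $\Mf_{\Lambdan}^\circ$ lift to diagonal twistor paths in $\Mf_\phi^\circ$. This works because at a point with trivial Picard group the rational Hodge isometry $\psi$ forces $\Pic(X_2)=0$ as well. Hence both K\"ahler cones equal the positive cones $\mathcal{C}_{X_1},\mathcal{C}_{X_2}$, and every K\"ahler class is admissible.
\end{itemize}
The citation buys brevity. Your version buys a transparent list of inputs: local Torelli, Huybrechts' connectivity of components by generic twistor paths, and the equality of K\"ahler cone and positive cone when $\Pic=0$.

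Two points should be stated more carefully.
\begin{itemize}
\item The lift must start at a point of the image with trivial Picard group. Such a point exists because the image is open, and periods with nontrivial Picard group form a countable union of hyperplane sections. A twistor line through a non-generic image point need not be admissible, so ``reachable from a point of the image'' should read ``reachable from a generic point of the image''.
\item Your appeal to local constancy of the orientation is unnecessary. At any point of $\Mf_\phi$, the isometry $\psi$ sends some K\"ahler class to a K\"ahler class by definition, so it already maps $\mathcal{C}_{X_1}$ onto $\mathcal{C}_{X_2}$.
\end{itemize}
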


By \cite[Lemma 5.8]{markman}, we can connect any two points in a fixed connected component of $\mathfrak{M}_\phi$ by a diagonal twistor path, i.e.~a connected chain of diagonal twistor lines (see \cite[Definition 5.6]{markman}).
Moreover, we can assume this path to be \textit{generic}, that is, we suppose the join of any two twistor lines to be points parametrising manifolds with trivial Picard group.
Choosing the appropriate  $\phi=\psin$ (see Section \ref{sec_vanishing_twist} for further details) and the markings, we can consider the connected component $\mathfrak{M}_\psin^\circ$ of $\mathfrak{M}_\psin$ that contains $P:=(M^{[n]},\eta_{M^{[n]}},S^{[n]},\eta_{S^{[n]}})$.
Now, given any $Q:=(X,\eta_X,Y,\eta_Y)\in\mathfrak{M}^\circ_\psin$, we can find a generic diagonal twistor path connecting $P$ to $Q$.
Then, the vector bundles $\cEnd(\mathcal{U}^{[n]})$ flatly deform as Azumaya algebras on those twistor paths and from them we can construct (possibly twisted) vector bundles on $X\times Y$, unique up to twisting by a line bundle.

More precisely, Markman considers a diagonal twistor line connecting $P$ to a point \linebreak[4]$P':=(M'^{[n]},\eta_{M'^{[n]}},S'^{[n]},\eta_{S'^{[n]}})$ consisting of a pair of marked Douady spaces with $\Pic(M')=0=\Pic(S')$, along which $\mathcal{U}^{[n]}$ deforms as an $\overline{\omega}$-slope stable bundle, for some K\"{a}hler class $\overline{\omega}$, then followed by a generic diagonal twistor path from $P'$ to $Q$.
Call $\mathcal{U}'^{[n]}$ the deformation of $\mathcal{U}^{[n]}$ on $P'$, which is $\omega$-projectively hyperholomorphic for any K\"{a}hler class $\omega$ (see \cite[Lemma 8.3]{markman} and Definition \ref{def_proj_hyperhol}).
Therefore, the vector bundle $\mathcal{U}'^{[n]}$ can be deformed as an hyperholomorphic bundle along any twistor line starting from $P'$.
We apply this to the first twistor line appearing in the path connecting $P'$ to $Q$.
Crucially, being this path generic, on each join of two twistor lines we obtain a (possibly twisted) vector bundle, which is slope-stable for some K\"{a}hler class, on a variety with trivial Picard group.
It follows that such a bundle has only trivial sub-objects; hence, it is slope-stable for all K\"{a}hler classes.
Therefore, the above argument can be iterated until we reach $Q$.
In conclusion, we get the following.

\begin{thm}[{\cite{markman},\cite[Theorem 2.3]{dequivconj}}]\label{thm_compatiblevectorbundle}
    Given $(X,\eta_X,Y,\eta_Y)\in\mathfrak{M}^\circ_{\psin}$ as above, there exists a split K\"ahler class $\omega_{X\times Y}:=\pi_X^*\omega_X+\pi_Y^*\omega_Y$ and a projectively $\omega_{X\times Y}$-stable hyperholomorphic vector bundle $(\kE,\alpha_\kE)$  such that
    \begin{enumerate}
        \item [{\normalfont{(1)}}] $\alpha_\mathcal{E}=\left[-\frac{\cc_1(\mathcal{U}^{[n]})}{\rk(\mathcal{U}^{[n]})}\right]$, where we view $\H^2(X\times Y,\mathbb{Z})$ as a trivial local system over the moduli of marked pairs;
        \item [{\normalfont{(2)}}] The twisted Fourier-Mukai transform
        \[
        \Phi_{\mathcal{E}}\colon \D(X,\alpha_X^{-1})\rightarrow \D(Y,\alpha_Y),
        \]
        with kernel $\kE$ is an equivalence, where $\alpha_\kE=\alpha_X+\alpha_Y\in \Br(X\times Y)=\Br(X)\oplus\Br(Y)$;
        \item [{\normalfont{(3)}}] For all $x\in X$, the Stiefel-Whitney class of the bundle on $Y$ corresponding to $x$ is
        \[w(\kE|_{\{x\}\times Y})=\left[\eta_Y^{-1}\left(r^{n-1}n!m\eta_S(h)-r^nn!\frac{\delta}{2}\right)\right]\in \H^2(Y,\mathbb{Z}/r^nn!\mathbb{Z}).\]
    \end{enumerate}
\end{thm}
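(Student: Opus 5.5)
The plan is to make rigorous the deformation argument sketched just before the statement. We transport the Azumaya algebra $\cEnd(\mathcal{U}^{[n]})$ from $P=(M^{[n]},\eta_{M^{[n]}},S^{[n]},\eta_{S^{[n]}})$ to the given point $Q=(X,\eta_X,Y,\eta_Y)\in\mathfrak{M}^\circ_{\psin}$ along a generic diagonal twistor path. We then read off the three properties from deformation invariance.

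\textbf{First step (leaving $P$).} Construction~\ref{example1} gives that $\mathcal{U}^{[n]}$ is slope stable with respect to a suitable split K\"ahler class $\overline{\omega}$ on $M^{[n]}\times S^{[n]}$, since it is built from the stable bundles $E_p$ via BKR. Because $\cc_1$ and the modular discriminant \eqref{eq_modularity1} of $\cEnd(\mathcal{U}^{[n]})$ are built out of $\cc_2$ of the factors and classes invariant under the diagonal rotation, $\cEnd(\mathcal{U}^{[n]})$ satisfies the hypotheses of Theorem~\ref{deformingazumaya} (in the hyperk\"ahler version of Appendix~\ref{app_hyperholo}) along the diagonal twistor line $\mathbb{P}^1_{\overline{\omega}}$. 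We deform it to $P'$, where $\Pic(M')=\Pic(S')=0$.

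\textbf{Iteration to $Q$.} At $P'$ and at every join of two lines of the generic path, the product has Picard group $\Pic=0$ on each factor. In that case $\H^{1,1}$ of the product is generated by the pullbacks of the two factors' $\H^{1,1}$, and these contain no integral classes. So a twisted bundle with a stable Azumaya algebra has no saturated subsheaves of intermediate rank with nontrivial slope. Hence it is stable for every split K\"ahler class, which lets us restart Theorem~\ref{deformingazumaya} on the next diagonal line. Here we use Lemma~\ref{lem_pisurj} and \cite[Lemma 5.8]{markman} for the existence of the path. Doing this finitely many times yields an $\omega_{X\times Y}$-stable Azumaya algebra $\mathcal{A}$ on $X\times Y$. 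We set $\kE$ to be a twisted bundle with $\cEnd(\kE)=\mathcal{A}$; it is unique up to a line bundle.

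\textbf{Properties (1)–(3).}
\begin{itemize}
\item[(1)] The Brauer class of $\mathcal{A}$ is the image of $w(\mathcal{A})$ in $\Br'$. By Remark~\ref{stiefel-whitney}(2),(3), $w(\mathcal{A})$ is the parallel transport of $-\pi(\cc_1(\mathcal{U}^{[n]}))$. This gives $\alpha_\kE=[-\cc_1(\mathcal{U}^{[n]})/\rk]$ as a $B$-field.
\item[(3)] This follows from Remark~\ref{stiefel-whitney}(1) combined with \eqref{c11}. The restriction of $\cc_1(\mathcal{U}^{[n]})$ to a fiber $\{x\}\times Y$ is $r^{n-1}n!mh-r^nn!\delta/2$, transported via $\eta_Y$. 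Note that the $M^{[n]}$-component of $\cc_1$ dies on restriction.
\item[(2)] We use that the equivalence property is open and closed in families of kernels over a twistor line. The family of kernels over $\mathbb{P}^1$ gives a relative Fourier--Mukai transform. Being an equivalence can be tested by the adjunction unit and counit being isomorphisms, i.e. by vanishing of cones. By semicontinuity this locus is open, and it is also closed because fiberwise the functors are fully faithful on generators. Since the twist classes vary, I would argue instead via Markman's criterion \cite[Section 7]{markman}: the kernel of a twisted transform is an equivalence iff the convolution $\kE^\vee\circ\kE$ is the structure sheaf of the diagonal. That condition deforms flatly, because $\Ext$-groups of $\kE_x$ with $\kE_{x'}$ are computed via $\cEnd$, which is hyperholomorphic. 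Starting from $P$, where $\Phi_{\mathcal{U}^{[n]}}$ is an equivalence by BKR and Mukai, we propagate along the path.
\end{itemize}

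\textbf{Expected obstacle.} The main obstacle is (2): controlling the equivalence property through twisted deformations where the twisting classes $\alpha_X,\alpha_Y$ change along the path. I would follow \cite[Theorem 2.3]{dequivconj}. There one checks that $\Ext^i(\kE_x,\kE_{x'})$ vanish for $x\neq x'$ and equal $\H^*(\mathcal{O}_Y)$-type groups for $x=x'$; these are hyperholomorphic invariants of $\mathcal{A}\boxtimes\mathcal{A}^\vee$ and hence constant along twistor lines. One then applies the Bondal--Orlov/Bridgeland criterion in its twisted form.
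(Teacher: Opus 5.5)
Your overall architecture matches the paper's: transport $\cEnd(\mathcal{U}^{[n]})$ from $P$ to $P'$, then along a generic diagonal twistor path to $Q$, and read off (1) and (3) from Remark~\ref{stiefel-whitney} together with \eqref{c11}. Items (1) and (3) are handled as in the paper. The gap is the stability input. This is the one substantial ingredient the paper imports from Markman, and you replace it by an argument that is false at $P'$. There the factors are Douady spaces, so although $\Pic(M')=\Pic(S')=0$ one has $\Pic(M'^{[n]})=\mathbb{Z}\delta_{M'^{[n]}}$ and $\Pic(S'^{[n]})=\mathbb{Z}\delta_{S'^{[n]}}$. Your sentence ``at $P'$ \dots\ the product has Picard group $0$ on each factor'' is therefore wrong. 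For a subsheaf $F$ of the (twisted) bundle $\mathcal{U}'^{[n]}$, the class $\rk(\mathcal{U}'^{[n]})\cc_1(F)-\rk(F)\cc_1(\mathcal{U}'^{[n]})$ is a $B$-field-independent rational $(1,1)$-class. Hence it lies in $\mathbb{Q}\delta_{M'^{[n]}}\oplus\mathbb{Q}\delta_{S'^{[n]}}$, and a nonzero such class with coefficients of opposite signs cuts a genuine wall through the cone of split K\"ahler classes. So ``stable for one K\"ahler class, hence for all'' does not follow at $P'$, and your iteration has no starting point. This is exactly what the paper takes from \cite[Lemma 8.3]{markman}: $\mathcal{U}'^{[n]}$ is projectively hyperholomorphic with respect to \emph{every} K\"ahler class. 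Your trivial-Picard argument is valid only at the later joins of the generic path, whose factors are no longer Douady spaces.

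The first leg is not justified either. Nothing in Construction~\ref{example1} gives slope stability of $\mathcal{U}^{[n]}$ on $M^{[n]}\times S^{[n]}$. There is no general principle by which BKR conjugation transports stability, and even the stability of the restrictions $\mathcal{U}^{[n]}|_{\{Z\}\times S^{[n]}}$ is a nontrivial theorem. Likewise, \eqref{eq_modularity1} controls only the K\"unneth component of $\cc_2(\cEnd(\mathcal{U}^{[n]}))$ in $\H^0(M^{[n]})\otimes\H^4(S^{[n]})$ (and, by the symmetric statement, the one in $\H^4\otimes\H^0$). Invariance of the $\H^2\otimes\H^2$ component under the diagonal $SU(2)$-action needs two further facts. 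First, this component must be, via the BBF form, a nonzero multiple of $\psin$; this is the Taelman--Markman input behind Corollary~\ref{cor_2}. Second, $\psin$ must send the chosen K\"ahler class to a K\"ahler class.

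Moreover, as you set it up, $P'$ cannot be reached along the single line $\mathbb{P}^1_{\overline{\omega}}$. The BBF pairing of $\delta_{M^{[n]}}$ with the K\"ahler component $\overline{\omega}_M$ is nonzero, so the transported $\delta_{M^{[n]}}$ is of type $(1,1)$ only at the base point and its antipode. Hence no other fibre is a Douady space with its natural marking. The passage from $P$ to $P'$ is better done on $M\times S$, by deforming $\mathcal{U}$ and applying BKR in the family.

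Your route to (2) is a reasonable reading of \cite[Theorem 2.3]{dequivconj}: constancy of $\dim\mathrm{Ext}^i(\kE_x,\kE_{x'})$ for hyperholomorphic bundles along twistor lines, then the twisted Bridgeland criterion. But it too presupposes the projective hyperholomorphicity that the missing stability step must supply.
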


The third statement follows easily from the properties in Remark~\ref{stiefel-whitney}.
Indeed, the vector bundle $\mathcal{E}$ is constructed deforming the Azumaya algebra $\mathcal{E}nd(\mathcal{U}^{[n]})$, for which the Stiefel-Whitney class is easily computed using \eqref{c11}.

\subsection{Components of the moduli space from derived equivalences} 
The next standard Lemma gives a criterion for identifying the base of a family of sheaves with a connected component of a moduli space.

\begin{lem}\label{lem_Ymodulisp}
    Let $X$ be a smooth projective variety and let $(Y,H)$ be a smooth polarised variety. Let $\Phi_{\kE}\colon \D(X,\alpha_{X}^{-1})\to \D(Y)$ be an equivalence, where $\kE$ is a $\pr_X^*\alpha_{X}$-twisted vector bundle on $X\times Y$ such that for every $x\in X$, the restriction $\kE_{x}$ is Gieseker stable with respect to $H$.
    Then $X$ is isomorphic to a connected component $M_{Y,H}(\ch(\kE_{x}))^{\circ}$ of the moduli space of Gieseker $H$-semistable sheaves on $Y$ with Chern character $\ch(\kE_{x})$, for any $x\in X$.
\end{lem}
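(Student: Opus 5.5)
The plan is to use the family $\kE$ to get a classifying morphism to the moduli space, and then show this morphism is an open and closed embedding.

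\textbf{Step 1 (classifying morphism).} Fix $x_0\in X$ and set $v:=\ch(\kE_{x_0})$. Since $X$ is connected and $\kE$ is flat over $X$, every fibre $\kE_x$ has Chern character $v$. The fibres are Gieseker $H$-stable, so they are simple. The twist on $\kE$ is pulled back from $X$, so étale locally on $X$ the twisted bundle $\kE$ is an honest family of stable sheaves on $Y$. Stable sheaves have only scalar automorphisms, and this is exactly what makes the local classifying maps glue: they agree on overlaps because twisting by the cocycle $\alpha_X$ does not change the isomorphism class of the fibres. This gives a morphism
\[
f\colon X\to M_{Y,H}(v),\qquad x\mapsto[\kE_x],
\]
which lands in the open locus of stable sheaves.

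\textbf{Step 2 ($f$ is injective on points and on tangent spaces).} Full faithfulness of $\Phi_{\kE}$ gives
\[
\Hom(\kE_x,\kE_{x'})\simeq\Hom(k(x),k(x'))=0\quad\text{for } x\neq x',
\]
so $f$ is injective. It also gives
\[
\mathrm{Ext}^1(\kE_x,\kE_x)\simeq\mathrm{Ext}^1(k(x),k(x))\simeq T_xX.
\]
Under this identification the differential $df_x$ is the Kodaira–Spencer map of the family, which is exactly the map induced by $\Phi_{\kE}$. Hence $df_x$ is an isomorphism.

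\textbf{Step 3 (open and closed embedding).} This is the main technical point. The Zariski tangent space of $M_{Y,H}(v)$ at the stable point $[\kE_x]$ is $\mathrm{Ext}^1(\kE_x,\kE_x)$, which has dimension $\dim X$. At a point $m$ in the image, $\dim T_m M\geq\dim_m M\geq\dim X$ because $f$ is injective, while $\dim T_mM=\dim X$ by Step 2. So $M$ is smooth of dimension $\dim X$ along $f(X)$. An injective morphism that is étale, since $df$ is an isomorphism between smooth spaces, is an open immersion. Its image is also closed because $X$ is projective. Therefore $f(X)$ is a connected component $M_{Y,H}(v)^\circ$.

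Two points need care along the way. First, separatedness of $M$ is used when arguing that a universally injective étale morphism is an open immersion. Second, the coarse moduli space is only locally fine; this is harmless because the stable locus is a $\mathbb{G}_m$-gerbe, and $f$ factors through the stack.
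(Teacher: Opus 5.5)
Your proposal is correct and follows essentially the paper's route. The paper also builds $f\colon X\to M_{Y,H}(v)$ from étale-local classifying maps, gets injectivity and $\ext^1(\kE_x,\kE_x)=\dim X$ from full faithfulness of $\Phi_{\kE}$, and then concludes by the standard open-and-closed argument, which it delegates to Reede--Zhang. Your extra identification of $df_x$ with the map induced by $\Phi_{\kE}$ is valid but not needed: injectivity of the proper map $f$ together with $\dim T_{f(x)}M=\dim X$ already forces $M$ to be smooth along $f(X)$ and $f$ to be an isomorphism onto a connected component.
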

\begin{proof}
We show that $X$ is isomorphic to a connected component of the moduli space $M:=M_{Y,H}\!\bigl(\ch(\kE_{x})\bigr)$ of Gieseker $H$-semistable sheaves on $Y$ with Chern character $\ch(\kE_{x})$, and that this component consists entirely of stable vector bundles. By \'{e}tale descent, the twisted family $\mathcal{E}$ induces a morphism $f\colon X\to M$. More precisely, let $\{U_i\}_{i\in I}$ be an \'etale cover of $X$ trivialising the twist $\alpha_{X}$. Then $\mathcal{E}$ is given by data $(\{\mathcal{E}_i\}_{i\in I},\{\varphi_{ij}\}_{i,j\in I})$, where each $\mathcal{E}_i$ is a locally free sheaf on $U_i\times Y$ and $\varphi_{ij}\colon\mathcal{E}_j|_{U_i\times_X U_j\times Y}\rightarrow\mathcal{E}_i|_{U_i\times_X U_j\times Y}$ are isomorphisms compatible with $\alpha_{X}$. Each $\mathcal{E}_i$ induces a classifying morphism $U_i\rightarrow M$, and hence a morphism $g\colon U':=\coprod_i U_i\rightarrow M$. By \cite[Proposition 3.1.7]{Alper-StacksModuli}, this map descends uniquely to a morphism $f\colon X\rightarrow M$ provided the diagram
   
\[\begin{tikzcd}
	{U'\times_XU'} & {U'} \\
	{U'} & M
	\arrow["{\pi_1}", from=1-1, to=1-2]
	\arrow["{\pi_2}"', from=1-1, to=2-1]
	\arrow["g", from=1-2, to=2-2]
	\arrow["g"', from=2-1, to=2-2]
\end{tikzcd}\]
    commutes. This is easily verified: cover $U'\times_XU'$ by open subsets of the form $U_i\times_X U_j$, and notice that the restriction $\mathcal{E}_i|_{U_i\times_X U_j\times Y}$ induces $g\circ\pi_1|_{U_i\times_X U_j}$, while $\mathcal{E}_j|_{U_i\times_X U_j\times Y}$ induces $g\circ\pi_2|_{U_i\times_X U_j}$. Since $\varphi_{ij}\colon\mathcal{E}_j|_{U_i\times_X U_j\times X}\rightarrow\mathcal{E}_i|_{U_i\times_X U_j\times Y}$ are isomorphisms, then $g\circ\pi_1|_{U_i\times_X U_j}=g\circ\pi_2|_{U_i\times_X U_j}$ and the diagram is commutative.

By \cite[Theorem 3.2.1]{caldararu-thesis}, we have
$0=\Hom\bigl(\Phi_{\kE}(\mathbb{C}(x)),\Phi_{\kE}(\mathbb{C}(x'))\bigr)=\Hom(\mathcal{E}_x,\mathcal{E}_{x'})$ for $x\neq x'$, which implies that $f$ is injective.
Since $\mathcal{E}_x$ is $H$-stable, \cite[Proposition 4.5.2]{HL10} yields
\[
\dim T_{M,[\mathcal{E}_x]}
  = \ext^1(\mathcal{E}_x,\mathcal{E}_x)
  = \ext^1\bigl(\Phi_{\kE}(\mathbb{C}(x)),\Phi_{\kE}(\mathbb{C}(x))\bigr)
  = \ext^1\bigl(\mathbb{C}(x),\mathbb{C}(x)\bigr)
  = \dim T_{X,x},
\]
where the penultimate equality follows from fully faithfulness, and the last equality from \cite[Ex.~11.9 ii)]{Huy03}.
Thus $f$ is injective and the Zariski tangent space to $M$ has dimension $\dim(X)$ at all points of $f(X)$.
The conclusion follows by a standard argument (see \cite[Lemma 1.6]{reede-zhang-smooth-components}).
\end{proof}

\subsection{Vanishing of the Brauer classes}\label{sec_vanishing_twist}
In this Section we provide further details on Construction~\ref{example1} and Theorem~\ref{thm_compatiblevectorbundle}. In particular, in the notation of Theorem~\ref{thm_compatiblevectorbundle}, we give sufficient conditions ensuring that both $\alpha_X=0$ and $\alpha_Y=0$ (see Lemma~\ref{lem_sufficientcondition}).

As in Construction~\ref{example1}, let $S$ be a $K3$ surface of Picard rank $1$ and choose a Mukai vector
\begin{equation}
    u=(r,mh,s)
\end{equation}
such that $r\geq 2$, $\gcd(r,s)=1$, $u^2=0$ and every stable sheaf with Mukai vector $u$ is a stable vector bundle (for this last condition, it is enough to assume $\gcd(r,m)=1$). We recall some well-known facts (see \cite{Mukai2,Yoshioka}).

The moduli space of stable vector bundles with Mukai vector $u$ is a $K3$ surface $M$ of Picard~rank~$1$. We denote by $\hat{h}$ the ample generator of $\Pic(M)$. There exists a universal vector bundle $\mathcal{U}$ on $M\times S$, uniquely determined up to tensoring with the pull-back of a line bundle $L\in \Pic(M)$, such that
\begin{equation}\label{id_c1U}
    c_1(\mathcal{U})=k\hat{h}+mh \in H^2(M,\QQ)\oplus H^2(S,\mathbb{Q}),
\end{equation} for some $k\in \Z$.
There is a rational Hodge isometry
\[
    \rathodge{\U}\colon H^2(M,\QQ)\rightarrow H^2(S,\QQ),
\]
defined by the correspondence defined by the class $\kappa(\U)\sqrt{\td_{M\times S}}$.

Since $\gcd(r,s)=1$, we have $\rathodge{\U^{-1}}(h)=\widehat{h}$ by \cite[Appendix, Example 1]{Yoshioka}.
Moreover, by \cite[Theorem 2.2]{Yoshioka}, the surface $S$ is also a moduli space of vector bundles on $M$ for some Mukai vector $\widehat{u}=(r,k\widehat{h},\widehat{s})$. In particular, the vector $\widehat{u}$ is primitive and isotropic.

\begin{lem}\label{lem_choosek}
    In the above setup, we can choose a universal family $\U$ such that $k=(2\nu-1) m$ for some $\nu\in\Z$.
\end{lem}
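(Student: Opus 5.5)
The plan is to exploit the freedom in normalising $\U$. Replacing $\U$ by $\U\otimes \pr_M^*\kO_M(a\hat h)$ for $a\in\ZZ$ is still a universal family. Since $\rk(\U)=r$, its first Chern class becomes $(k+ra)\hat h+mh$. So $k$ is only determined modulo $r$, and it suffices to find $\nu\in\ZZ$ with
\[
k\equiv (2\nu-1)m \pmod r, \qquad\text{i.e.}\qquad 2\nu m\equiv k+m \pmod r .
\]
We are assuming $\gcd(r,m)=1$, so $m$ is invertible modulo $r$. The argument then splits according to the parity of $r$.

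\emph{Case $r$ odd.} Here $2m$ is a unit in $\ZZ/r\ZZ$. Hence $\nu\equiv (2m)^{-1}(k+m)\pmod r$ solves the congruence, and we are done with no further input.

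\emph{Case $r$ even.} Here $m$ is odd, and the multiples of $2m$ modulo $r$ are exactly the even residues. So a solution $\nu$ exists if and only if $k+m$ is even, i.e.\ if and only if $k$ is odd (this condition is independent of the normalisation, since $r$ is even). It therefore remains to show that $k$ is odd when $r$ is even. It suffices to prove $\gcd(r,k)=1$, and this is the step I expect to be the main obstacle. I would derive it from the symmetric role of $M$ and $S$ recalled just before the Lemma.
\begin{itemize}
\item By \cite[Theorem 2.2]{Yoshioka}, $S$ is a moduli space of stable bundles on $M$ with primitive isotropic Mukai vector $\widehat u=(r,k\hat h,\widehat s)$, and $\U|_{M\times\{x\}}$ is the bundle parametrised by $x$.
\item By \cite[Appendix]{Yoshioka} (using $\gcd(r,s)=1$), the dual correspondence satisfies $\rathodge{\U}(\hat h)=h$, so $\hat h^2=h^2$. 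Isotropy gives $m^2h^2=2rs$ and $k^2h^2=2r\widehat s$.
\item I would then use Yoshioka's explicit description of $\widehat u$ in terms of $u$ to read off $\gcd(r,k)=1$. Concretely, $\widehat u$ is obtained from $u$ by the correspondence induced by $\ch(\U^\vee)\sqrt{\td}$, which should force $k\equiv \pm m\cdot(\text{unit})\pmod r$.
\end{itemize}
As a fallback, I would argue by contradiction. Suppose a prime $p$ divides $\gcd(r,k)$. Then restricting $\U$ to $M\times\{x\}$ shows $w(\cEnd(\U|_{M\times\{x\}}))\equiv 0$ modulo $p$. Comparing via Remark~\ref{stiefel-whitney}(1),(3) with the Brauer/Stiefel--Whitney class of $\cEnd(\U)$ on $M\times S$, whose $S$-component is $-[mh]$ and is nonzero mod $p$, should contradict the fact that $\U$ is untwisted on both factors simultaneously.

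Once $k$ is shown to be odd for even $r$, the congruence is solvable. Choosing $a$ with $k+ra=(2\nu-1)m$ then produces the required universal family.
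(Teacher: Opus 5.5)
Your reduction to the congruence $2\nu m\equiv k+m \pmod r$ and your treatment of odd $r$ are correct. For odd $r$ you invert $2m$ directly; the paper instead writes $mx+ry=k$ and shifts $x$ by $r$ to make it odd, which is equivalent.

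The gap is the step you yourself flag: for even $r$ you never prove that $k$ is odd, i.e.\ that $\gcd(r,k)=1$. Your first route only restates the goal. Since $m$ is a unit mod $r$, the claim ``$k\equiv \pm m\cdot(\text{unit})\pmod r$'' is equivalent to $\gcd(r,k)=1$. The Stiefel--Whitney fallback cannot work even in principle. Its only input is that $\U$ is an untwisted rank-$r$ bundle with $\cc_1(\U)=k\hat h+mh$, and this places no constraint on $k$. For example, $\kO_M(k\hat h)\boxtimes\kO_S(mh)\oplus\kO_{M\times S}^{\oplus (r-1)}$ is untwisted for every $k$. Likewise, a class in $\H^2(M\times S,\ZZ/r\ZZ)$ whose $M$-component vanishes mod $p$ while its $S$-component does not is perfectly consistent. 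Whatever forces $\gcd(r,k)=1$ must use primitivity of $\widehat u$, which that argument never touches.

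The missing idea is a short divisibility argument using exactly the facts you listed:
\begin{itemize}
\item From $2rs=m^2h^2$ and $\gcd(r,m)=1$, you get $r\mid h^2/2$.
\item Then $2r\widehat s=k^2\hat h^2=k^2h^2$ gives $\widehat s=k^2\cdot\frac{h^2/2}{r}$, an integer multiple of $k$.
\item Hence $\gcd(r,k)$ divides $r$, $k$ and $\widehat s$, so it divides $\gcd(r,k,\widehat s)$. This equals $1$ because $\widehat u=(r,k\hat h,\widehat s)$ is primitive and $\hat h$ is primitive.
\end{itemize}
This is exactly the paper's argument. With $\gcd(r,k)=1$ in hand, $k$ is odd for even $r$, and the rest of your proof goes through as written.
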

\begin{proof}
Since the universal family is determined up to tensoring with the pull-back of a line bundle $L\in \Pic(M)$, it is enough to show that $k\equiv(2\nu-1)m\pmod{r}$ for some $\nu\in\mathbb{Z}$.

Since $\gcd(r,m)=1$, the condition $rs=m^2(h^2/2)$ implies that $r$ divides $h^2/2$. We also have $r\widehat{s}=k^2(\widehat{h}^2/2)=k^2(h^2/2)$ and therefore $\gcd(r,k)$ divides $\widehat{s}=\dfrac{k^2(h^2/2)}{r}=k^2\dfrac{(h^2/2)}{r}$. Thus $\gcd(r,k)$ divides $\gcd(r,k,\widehat{s})=1$, since $\widehat{u}=(r,k\widehat{h},\widehat{s})$ is primitive. It follows that $\gcd(r,k)=1$.

Since $\gcd(r,m)=1$ we can find $x,y\in\ZZ$ such that 
\[
mx+ry=k.
\]
We claim that $x$ can be chosen odd. If $r$ is even, then $\gcd(r,k)=1$ implies that $k$ is odd; hence $x$ is odd. If $r$ is odd and $x$ is already odd, there is nothing to prove. Finally, if $r$ is odd and $x$ is even, then 
\[
m(x+r)+r(y-m)=k,
\]
and we can replace $x$ with $x+r$, which is odd. This proves the claim.
\end{proof}

As explained above, one constructs a vector bundle $\mathcal{U}^{[n]}\in\Coh(M^{[n]}\times S^{[n]})$. There is rational Hodge isometry $\rathodge{\mathcal{U}^{[n]}}:H^2(M^{[n]},\mathbb{Q})\rightarrow H^2(S^{[n]},\mathbb{Q})$: through an important result by Taelman (see \cite{tae23}), it is induced by the correspondence associated with $\kappa(\mathcal{U}^{[n]})\sqrt{\td_{M^{[n]}\times S^{[n]}}}$; we refer to \cite[Section 3.2]{markman} for further details.
By \cite[Section 7]{markman}, we can decompose it as $\rathodge{\mathcal{U}^{[n]}}=(\rathodge{\mathcal{U}},\mathrm{id}):H^2(M,\mathbb{Q})\oplus\QQ\delta_{M^{[n]}}\rightarrow H^2(S,\mathbb{Q})\oplus\QQ\delta_{S^{[n]}}$. By Equations~(7.9) and~(7.10) of \cite{markman}, we have
\begin{equation}
    c_1(\mathcal{U}^{[n]})=a_{S^{[n]}}+a_{M^{[n]}}\in H^2(M^{[n]},\QQ)\oplus H^2(S^{[n]},\QQ),
\end{equation}
where
\begin{align}
    a_{S^{[n]}}&=r^{n-1}n!mh
       - r^n n!\frac{\delta_{S^{[n]}}}{2},\\
    a_{M^{[n]}}&=r^{n-1}n!k\widehat{h}+r^n n!\frac{\delta_{M^{[n]}}}{2}.
\end{align}
It is straightforward to get the following relation:
\begin{equation}\label{eq_identifyingcomponents}
    a_{M^{[n]}}=\frac{k}{m}\rathodge{\mathcal{U}^{[n]}}^{-1}\left(a_{S^{[n]}}\right)+\frac{r^nn!(k+m)}{2m}\delta_{M^{[n]}}
\end{equation}

Moreover, in the setup of Theorem \ref{thm_compatiblevectorbundle}, we have the Brauer classes
\begin{align}
    \alpha_X&=\left[-\dfrac{1}{r^nn!}\eta_X^{-1}\circ\eta_{M^{[n]}}(a_{M^{[n]}})\right]\in\Br(X)=H^2(X,\mathbb{Z})/\Pic(X)\otimes_{\Z}\QQ/\Z \label{eq_twistonX}\\
    \alpha_Y&=\left[-\dfrac{1}{r^nn!}\eta_Y^{-1}\circ\eta_{S^{[n]}}(a_{S^{[n]}})\right]\in\Br(Y)=H^2(Y,\mathbb{Z})/\Pic(Y)\otimes_{\Z}\QQ/\Z \label{eq_twistonY}
\end{align}
By construction, following the notation of Theorem~\ref{thm_compatiblevectorbundle}, following diagram commutes:

\begin{equation}\label{commutation_markings}
    \begin{tikzcd}
	{H^2(X,\QQ)} & {H^2(Y,\QQ)} \\
	{\Lambda_{K3^{[n]},\QQ}} & {\Lambda_{K3^{[n]},\QQ}} \\
	{H^2(M^{[n]},\QQ)} & {H^2(S^{[n]},\QQ)}
	\arrow["{\rathodge{\mathcal{E}}}", from=1-1, to=1-2]
	\arrow["{\eta_X}"', from=1-1, to=2-1]
	\arrow["{\eta_Y}", from=1-2, to=2-2]
	\arrow["\psin", from=2-1, to=2-2]
	\arrow["{\eta_{M^{[n]}}}", from=3-1, to=2-1]
	\arrow["{\rathodge{\mathcal{U}^{[n]}}}"', from=3-1, to=3-2]
	\arrow["{\eta_{S^{[n]}}}"', from=3-2, to=2-2]
\end{tikzcd}
\end{equation}

Combining \eqref{eq_twistonX} with \eqref{eq_identifyingcomponents}, and using the commutativity of diagram~\eqref{commutation_markings}, we obtain
\begin{align}
    \alpha_X&=\left[-\frac{k}{m r^nn!}\rathodge{\mathcal{E}}^{-1}\circ\eta_{Y}^{-1}\circ \eta_{S^{[n]}}(a_{S^{[n]}})-\frac{(k+m)}{2m}\eta_X^{-1}\circ\eta_{M^{[n]}}(\delta_{M^{[n]}})\right]\label{eq_twistX}
\end{align}

A crucial step in the proof of Theorem \ref{thm_main} will be to choose the markings in such a way that the Brauer class $\alpha_Y$ vanishes, so that the kernel $\mathcal{E}$ parametrises untwisted vector bundles. For this purpose, we prove the following 
\begin{lem}\label{lem_sufficientcondition}
    Assume that $\frac{k+m}{2m}\in\Z$ and $\eta_{Y}^{-1}\circ \eta_{S^{[n]}}(a_{S^{[n]}})\in\Pic(Y)_{\mathbb{Q}}$. Then $\alpha_Y=0$ and $\alpha_X=0$.  
\end{lem}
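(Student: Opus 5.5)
Both vanishings are read off from the explicit formulas \eqref{eq_twistonY} and \eqref{eq_twistX}, using the identification $\Br(Z)=(\H^2(Z,\ZZ)/\Pic(Z))\otimes_{\ZZ}\QQ/\ZZ$ for $Z=X,Y$. In this description a class $[\beta]$ with $\beta\in \H^2(Z,\QQ)$ vanishes whenever $\beta\in \Pic(Z)_{\QQ}+\H^2(Z,\ZZ)$. More precisely, the map $\H^2(Z,\QQ)\to \Br(Z)$ kills both $\H^2(Z,\ZZ)$ and $\Pic(Z)\otimes\QQ$. The latter holds because $\Pic(Z)$ is saturated in $\H^2(Z,\ZZ)$, as the kernel of the map to $\H^{0,2}$, so $(\H^2(Z,\ZZ)/\Pic(Z))\otimes\QQ/\ZZ=\H^2(Z,\QQ)/(\H^2(Z,\ZZ)+\Pic(Z)_\QQ)$.

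For $\alpha_Y$: by \eqref{eq_twistonY}, $\alpha_Y$ is the class of $-\frac{1}{r^nn!}\eta_Y^{-1}\eta_{S^{[n]}}(a_{S^{[n]}})$. By hypothesis this lies in $\Pic(Y)_\QQ$, so $\alpha_Y=0$.

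For $\alpha_X$, I use \eqref{eq_twistX}, which has two terms.

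\emph{First term.} Since $\rathodge{\kE}$ is a rational Hodge isometry, $\rathodge{\kE}^{-1}$ maps $\H^{1,1}(Y)\cap \H^2(Y,\QQ)=\Pic(Y)_\QQ$ into $\H^{1,1}(X)\cap\H^2(X,\QQ)=\Pic(X)_\QQ$. Hence $\rathodge{\kE}^{-1}\eta_Y^{-1}\eta_{S^{[n]}}(a_{S^{[n]}})\in\Pic(X)_\QQ$, and after multiplying by the rational scalar $-\frac{k}{mr^nn!}$ it still lies in $\Pic(X)_\QQ$. So the first term contributes nothing.

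\emph{Second term.} The class $\eta_X^{-1}\eta_{M^{[n]}}(\delta_{M^{[n]}})$ is integral, because the markings are integral isometries. Its coefficient $\frac{k+m}{2m}$ is an integer by hypothesis, so this term lies in $\H^2(X,\ZZ)$ and also maps to zero. Therefore $\alpha_X=0$.

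The only delicate point is justifying \eqref{eq_twistX}, namely the commutativity of diagram \eqref{commutation_markings} combined with \eqref{eq_identifyingcomponents}. This is already established in the text, so no real obstacle remains beyond the saturation remark above.
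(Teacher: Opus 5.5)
Your proposal is correct and follows the paper's argument: $\alpha_Y$ vanishes directly from \eqref{eq_twistonY}, and in \eqref{eq_twistX} the $\delta_{M^{[n]}}$-term is integral while the other term lies in $\Pic(X)_\QQ$ because $\rathodge{\kE}$ is a Hodge isometry. Your added remark that $\H^2(Z,\QQ)\to\Br(Z)$ kills $\H^2(Z,\ZZ)+\Pic(Z)_\QQ$ is fine; in fact it follows from right exactness of $-\otimes\QQ/\ZZ$ alone, so saturation of $\Pic(Z)$ is not even needed.
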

\begin{proof}
    The vanishing of $\alpha_Y$ follows immediately from \eqref{eq_twistonY}. On the other hand, by \eqref{eq_twistX}, the class $\alpha_X$ is represented by $-\frac{k}{mr^nn!}\rathodge{\mathcal{E}}^{-1}\circ\eta_{Y}^{-1}\circ \eta_{S^{[n]}}(a_{S^{[n]}})-\frac{(k+m)}{2m}\eta_X^{-1}\circ\eta_{M^{[n]}}(\delta_{M^{[n]}})$. Since $-\frac{(k+m)}{2m}\eta_X^{-1}\circ\eta_{M^{[n]}}(\delta_{M^{[n]}})\in H^2(X,\Z)$, this has the same Brauer class as $-\frac{k}{m r^nn!}\rathodge{\mathcal{E}}^{-1}\circ\eta_{Y}^{-1}\circ \eta_{S^{[n]}}(a_{S^{[n]}})$. This class is zero because $\rathodge{\mathcal{E}}^{-1}$ is Hodge and $\eta_Y^{-1}\circ \eta_{S^{[n]}}(a_{S^{[n]}})\in \Pic(Y)_\QQ$.
\end{proof}

\begin{rmk}\label{rmk_nicotranquillo}
    By Lemma~\ref{lem_choosek}, we can always assume that $\frac{k+m}{2m}\in\Z$.
\end{rmk}

The Example below shows that the vanishing of $\alpha_Y$ is not enough to conclude automatically that $\alpha_X=0$.

\begin{example}\label{ex_nonfinemoduli}
Let $S$ be a very general polarised $K3$ surface of degree $4$, i.e. $\Pic(S)=\Z h$ and $h^2=4$. Let $M=M_S(u)$ be the moduli $K3$ surface parametrising stable vector bundles with Mukai vector $u=(2,h,1)$. By Lemma~\ref{lem_choosek} and identity (\ref{id_c1U}), we can choose a universal family $\mathcal{U}\in\Coh(M\times S)$, so that $c_1(\mathcal{U})=\widehat{h}+h$.

Choose a class $\gamma\in H^2(S,\Z)$ such that $\gamma^2=0$ and $\gamma\cdot h=1$; this can always be done, as $H^2(S,\Z)$ contains at least a copy of the hyperbolic plane (see Section \ref{sec_k3nlattice}). Set $w:=h-\delta_{S^{[2]}}+2\gamma\in H^2(S^{[2]},\Z)$.
Then $w$ is primitive and $w^2>0$. Fix a marking $\eta_S\colon H^2(S,\mathbb{Z})\rightarrow\Lambda_{K3}$, inducing a marking $\eta_{S^{[2]}}$ of $S^{[2]}$. 
By surjectivity of the period map, there exists a projective marked IHS manifold of $K3^{[2]}$-type $(Y,\eta_Y)$ with period $\sigma_Y\in w^\perp$ and with $\Pic(Y)=\mathbb{Z}\eta_Y^{-1}\circ\eta_{S^{[2]}}(w)$, such that $(S^{[2]},\eta_{S^{[2]}})$ and $(Y, \eta_Y)$ belong to the same connected component of $\mathfrak{M}_{\Lambda_{K3^{[2]}}}$. Combining the previous relations, we obtain
\begin{align*}
    \alpha_Y=\left[-\dfrac{1}{2^22!}\eta_Y^{-1}\circ\eta_{S^{[2]}}(a_{S^{[2]}})\right]=\left[-\frac{1}{2}\eta_Y^{-1}\circ\eta_{S^{[2]}}\left(w\right)+\eta_Y^{-1}\circ\eta_{S^{[2]}}\left(\gamma\right)\right]=0,
\end{align*}
where the last equality follows from the observation that $-\frac{1}{2}\eta_Y^{-1}\circ\eta_{S^{[2]}}\left(w\right)+\eta_Y^{-1}\circ\eta_{S^{[2]}}\left(\gamma\right)\in\Pic(Y)_{\QQ}+H^2(Y,\mathbb{Z})$.
By Lemma \ref{lem_pisurj}, there exists a marked IHS manifold $(X,\eta_X)$ of $K3^\nn-$type such that we can apply Theorem \ref{thm_compatiblevectorbundle} to the pair $(X,\eta_X,Y,\eta_Y)$.
On the other hand, we obtain 
\begin{align*}
\alpha_X
&= \left[-\frac{1}{8}\eta_X^{-1}\circ\eta_{M^{[2]}}\circ\rathodge{\mathcal{U}^{[2]}}^{-1}\left(a_{S^{[2]}}\right)-\eta_X^{-1}\circ\eta_{M^{[2]}}\left(\delta_{M^{[2]}}\right)\right] 
&& \text{by Equation~\eqref{eq_twistX}} \\
&= \left[-\frac{1}{8}\eta_X^{-1}\circ\eta_{M^{[2]}}\circ\rathodge{\mathcal{U}^{[2]}}^{-1}\left(a_{S^{[2]}}\right)\right]
&& \text{since } \eta_X^{-1}\circ\eta_{M^{[2]}}\left(\delta_{M^{[n]}}\right)\in H^2(X,\Z) \\
&= \left[-\frac{1}{2}\rathodge{\mathcal{E}}^{-1}\circ\eta_Y^{-1}\circ\eta_{S^{[2]}}\left(w\right)+\eta_X^{-1}\circ\eta_{M^{[2]}}\circ\rathodge{\mathcal{U}^{[2]}}^{-1}(\gamma)\right]
&& \text{by the definitions of $\gamma$ and $w$} \\
&= \left[\eta_X^{-1}\circ\eta_{M^{[2]}}\circ\rathodge{\mathcal{U}^{[2]}}^{-1}(\gamma)\right]
&& \text{since }\frac{1}{2}\rathodge{\mathcal{E}}^{-1}\circ\eta_Y^{-1}\circ\eta_{S^{[2]}}\left(w\right)\in\Pic(X).
\end{align*}

\noindent We now show that this class is non-zero in $\Br(X)$. Suppose, for contradiction, that $\alpha_X=0$. Then
\begin{equation*}
    \eta_X^{-1}\circ\eta_{M^{[2]}}\circ\rathodge{\mathcal{U}^{[2]}}^{-1}(\gamma)\in\Pic(X)_\QQ+H^2(X,\Z).
\end{equation*}
Applying $\rathodge{\mathcal{U}^{[2]}}\circ\eta_{M^{[2]}}^{-1}\circ\eta_X$, and using the description of $\Pic(X)$, we obtain
\[
   \gamma\in \rathodge{\mathcal{U}^{[2]}}\bigl(H^2(M^{[2]},\Z)\bigr)+\QQ w.
\]
Hence there exists $\lambda\in\QQ$ such that
\begin{equation*}
   \gamma-\lambda w=(1-2\lambda)\gamma-\lambda h+\lambda\delta_{S^{[n]}}\in \rathodge{\mathcal{U}^{[2]}}\bigl(H^2(M^{[2]},\Z)\bigr).
\end{equation*}
\noindent Since $\rathodge{\mathcal{U}^{[2]}}=(\rathodge{\mathcal{U}},\mathrm{id})$, it follows that $\lambda\in\Z$.
We get
\begin{equation}\label{eq:odd}
   h\cdot\bigl((1-2\lambda)\gamma-\lambda h+\lambda\delta_{S^{[n]}}\bigr)
   =(1-2\lambda)h\cdot\gamma-\lambda h^2
   =1-2\lambda-4\lambda
   =1-6\lambda.
\end{equation}
This is odd for every $\lambda\in\Z$. On the other hand, by the definition of $\rathodge{\mathcal{U}}$, for every $\beta\in H^2(S,\mathbb{Z})$ one has
\begin{equation*}
    \beta\in \rathodge{\mathcal{U}}(H^2(M,\mathbb{Z}))\iff h\cdot\beta\in 2\mathbb{Z},
\end{equation*}
contradicting \eqref{eq:odd}. This proves $\alpha_X\neq 0$.
\end{example}
\begin{rmk}\label{rmk_nonfinemoduli}
    In the setup of Example~\ref{ex_nonfinemoduli}, combining Lemma~\ref{lem_fibersarestable} and Lemma~\ref{lem_omegaimpliesH}, the fibers of $\mathcal{E}$ are $H$-slope stable with respect to a suitable polarisation $H\in\Pic(Y)$ (see the proof of Theorem~\ref{thm_main} for further details). 
    By Lemma~\ref{lem_Ymodulisp}, the previous Example~\ref{ex_nonfinemoduli} produces a non-fine moduli space $X$ of vector bundles on a projective IHS fourfold $Y$. 
    Moreover we get a twisted derived equivalence $\D(X,\alpha_X^{-1})\to \D(Y)$ for a non trivial Brauer class $\alpha_X$. See Remark~\ref{rmk_iso_nonfine} for further discussions on this example.
\end{rmk}

\section{Paths in the moduli space of marked IHS manifolds}\label{sec_admissible}
This Section is devoted to the study of  the connected components of the moduli space $\mathfrak{M}_\Lambdan$ of marked IHS manifolds of $K3^{[n]}$-type. In the setup of Theorem~\ref{thm_compatiblevectorbundle}, the deformation of projectively hyperholomorphic vector bundles takes place within a fixed connected component of the moduli space of marked pairs, along generic diagonal twistor paths. Consequently, we need to determine whether two marked IHS manifolds of $K3^{[n]}$-type $(X_1,\eta_{X_1})$ and $(X_2,\eta_{X_2})$ belong to the same connected component of $\mathfrak{M}_{\Lambdan}$ or, equivalently, can be connected by parallel transport (see Remark~\ref{rmk_components}). To address this problem, the main tools we employ are Eichler's criterion, which describes the orbits of primitive elements in the $K3^{[n]}$-lattice under a certain class of isometries (the so-called Eichler transvections), and Markman's classification of monodromy operators for IHS manifolds of $K3^{[n]}$-type. The core result of the Section is Lemma~\ref{markingexistence}, which provides a sufficient criterion for a IHS manifold $Y$ to admit a marking $\eta_Y$ so that $(Y,\eta_Y)$ lies in the same connected component of $\mathfrak{M}_\Lambdan$ as a marked Hilbert scheme $(S^{[n]},\eta_{S^{[n]}})$, to which successfully apply the study carried out in Section \ref{sec_vanishing_twist}. This will be the starting point to perform Construction~\ref{example1}.

\subsection{Eichler's criterion} We now recall the notation and definitions needed to prove Lemma~\ref{lem_latticetheory}, which is an application of Eichler's criterion to the $K3^{[n]}$-lattice.

Let $L$ be a non-degenerate lattice. For any element $w\in L$, we define its \textit{divisibility} $\operatorname{div}(w)$ to be the unique positive generator of the ideal
\[
(w,L) := \{w\cdot \lambda \mid \lambda \in L\} \subset \ZZ.
\]
The non-degeneracy of the symmetric form induces an injective group homomorphism $ i \colon L \to L^\vee$. The quotient $A_L := L^\vee / L$ is a finite group, called the \textit{discriminant group}. The lattice $L$ is said to be \textit{unimodular} if $A_L=0$.
\subsubsection{Remarkable subgroups of $\mathrm{O}(L)$}
Any isometry $g \in \mathrm{O}(L)$ induces a group automorphism $\overline{g} \in \mathrm{Aut}(A_L)$. We denote by $\widetilde{\mathrm{O}}(L)$ the subgroup of $\mathrm{O}(L)$ consisting of those isometries whose action on the discriminant group $A_L$ is equal to $\id_{A_L}$. On the other hand, we define the group $\mathrm{O}^+(L)$ of \textit{orientation-preserving isometries} of $L$ as the isometries $g \in \mathrm{O}(L)$ that preserve the orientation of a maximal positive definite subspace of $L \otimes \RR$ (see \cite[Section~4]{Mar11}). This allows us to introduce
$\widetilde{\mathrm{O}}^+(L) := \widetilde{\mathrm{O}}(L) \cap \mathrm{O}^+(L) \subseteq \mathrm{O}(L)$, which plays a key role in Proposition~\ref{prop_eichler}. Recall that the unimodular \textit{hyperbolic plane} is the lattice
\begin{center}
    $U = \Z e \oplus \Z f$, with $e^2 = f^2 = 0$ and $(e,f) = 1$.
\end{center}

\begin{prop}[\textit{Eichler's criterion, \cite[Proposition~3.3(i)]{gritsenko-hulek-sankaran-abelianisation}}]\label{prop_eichler}
Let $L$ be an even lattice, such that $L=U_1\oplus L_1$ and $L_1=U_2\oplus L_2$, where $U_1$ and $U_2$ denote, respectively, two distinguished copies of the unimodular hyperbolic plane in $L$. Then, if $v_1,v_2\in L$ are primitive elements such that
\begin{enumerate}
    \item [{\normalfont{(1)}}] $v_1^2=v_2^2$,
    \item [{\normalfont{(2)}}] $[v_1/\mathrm{div}(v_1)]=[v_2/\mathrm{div}(v_2)]\in A_L$,
\end{enumerate}
there exists $\tau\in \widetilde{\mathrm{O}}^{+}(L)$ such that $\tau(v_1)=v_2$.
\end{prop}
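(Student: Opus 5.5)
The plan is the classical Eichler argument: use the two orthogonal hyperbolic planes to move each primitive vector into the normal form
\[
v \;\longmapsto\; \operatorname{div}(v)\,\bigl(e_1 + a f_1\bigr) + \mu, \qquad \mu\in L_1,
\]
via Eichler transvections. This normal form depends only on $v^2$ and on the class $[v/\operatorname{div}(v)]\in A_L$. Since transvections lie in $\widetilde{\mathrm{O}}^+(L)$, the two hypotheses then force $v_1$ and $v_2$ to have the same normal form.

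The first step is to introduce the tools. For an isotropic $e\in L$ and $a\in L$ with $a\cdot e=0$, set
\[
t(e,a)(x)=x-(a\cdot x)\,e+(e\cdot x)\,a-\tfrac12 a^2(e\cdot x)\,e .
\]
I will check three things:
\begin{itemize}
\item $t(e,a)$ is an integral isometry, using that $L$ is even;
\item it acts trivially on $A_L$, since $t(e,a)(x)-x\in L$ for all $x\in L^\vee$;
\item it lies in $\mathrm{O}^+(L)$, since it is unipotent and hence connected to the identity in $\mathrm{O}(L\otimes\RR)$.
\end{itemize}
So all transvections lie in $\widetilde{\mathrm{O}}^+(L)$. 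I will use transvections with $e\in\{e_1,f_1,e_2,f_2\}$ and $a$ chosen in the orthogonal complement.

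The second step is the reduction. Write $v=x_1e_1+y_1f_1+w$ with $w\in L_1$, and let $d=\operatorname{div}(v)$. I will show that, after applying transvections, one may assume $x_1=d$. Concretely:
\begin{itemize}
\item Transvections built from $e_2,f_2$ and vectors of $U_1$ let me perform a Euclidean algorithm on the $U_1$- and $U_2$-coefficients, moving the gcd of the pairings of $v$ with $U_1\oplus U_2$ into a single coordinate.
\item Transvections with $e=f_1$ and $a\in L_1$ change $x_1$ by $a\cdot w$. Since $U_2\subset L_1$ is unimodular, these pairings can reach the gcd of everything, which equals $d$.
\end{itemize}

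Once $x_1=d$, the transvection $t(f_1,a)$ with $a\in L_1$ does two things. It leaves $x_1=e_1$-coefficient fixed, and it changes $w$ by $d\,a$ (plus a multiple of $f_1$). This lets me replace $w$ by any representative of $w$ modulo $dL_1$. The class $[v/d]\in A_L$ equals $[w/d]\in A_{L_1}=A_L$, so it determines $w/d$ modulo $L_1$, that is, $w$ modulo $dL_1$. Hence I can fix $w$ to be a chosen representative $\mu$. Finally, $y_1$ is determined by $v^2=2dy_1+\mu^2$.

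The conclusion is that $v_1$ and $v_2$ both map to the same vector under elements of $\widetilde{\mathrm{O}}^+(L)$, and composing these gives the required $\tau$.

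I expect the main obstacle to be the gcd bookkeeping in the second step: arranging the $e_1$-coefficient to be exactly $\operatorname{div}(v)$ while keeping primitivity and the isotropy conditions needed by the transvections. Here I would follow the argument of Gritsenko–Hulek–Sankaran closely, first concentrating all divisibility into $U_2$ and only then moving it into $U_1$.
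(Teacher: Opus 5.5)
\textbf{Verdict: right approach, but the second bullet of your reduction step is false as written.} The paper gives no proof of this statement; it cites Gritsenko--Hulek--Sankaran, whose result rests on the same classical Eichler-transvection argument you propose. Several parts of your proposal are correct:
\begin{itemize}
\item the three properties of $t(e,a)$;
\item the first bullet (Euclid on the $U_1\oplus U_2$-coordinates);
\item the final normal-form step. Once $x_1=\mathrm{div}(v)$, the maps $t(f_1,a)$ with $a\in L_1$ fix $x_1$ and move $w$ by $\mathrm{div}(v)\,a$. Also $[v/\mathrm{div}(v)]=[w/\mathrm{div}(v)]\in A_{L_1}=A_L$, because $\mathrm{div}(v)$ divides $y_1=v\cdot e_1$.
\end{itemize}

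\textbf{The error.} The map $t(f_1,a)$ is an isometry fixing $f_1$, so it preserves $x_1=v\cdot f_1$. You use exactly this fact a few lines later. The transvection that actually moves $x_1$ is $t(e_1,a)$ with $a\in L_1$. It sends $x_1\mapsto x_1-a\cdot w-\tfrac12 a^2y_1$ and, at the same time, $w\mapsto w+y_1a$. Moreover, even ignoring the quadratic term, shifting $x_1$ by elements of $\mathrm{div}_{L_1}(w)\ZZ$ only reduces $x_1$ modulo $\mathrm{div}_{L_1}(w)$. It does not produce $\gcd(x_1,y_1,\mathrm{div}_{L_1}(w))=\mathrm{div}(v)$. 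So the claim that ``these pairings can reach the gcd'' is unjustified, and this is precisely the step that has to be redone.

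\textbf{A fix that realises your plan (first into $U_2$, then into $U_1$).}
\begin{enumerate}
\item Write $v=x_1e_1+y_1f_1+x_2e_2+y_2f_2+w_2$ with $w_2\in L_2$. Then $\mathrm{div}(v)=\gcd(x_1,y_1,x_2,y_2,\mathrm{div}_{L_2}(w_2))$.
\item The transvections $t(e_2,ke_1)$, $t(e_2,kf_1)$, $t(f_2,ke_1)$, $t(f_2,kf_1)$ with $k\in\ZZ$ fix $w_2$. They act on the matrix $\left(\begin{smallmatrix}x_1&x_2\\-y_2&y_1\end{smallmatrix}\right)$ by all elementary row and column operations. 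By Smith normal form you can therefore reach $x_2=y_2=0$ and $x_1=\gcd(x_1,y_1,x_2,y_2)$.
\item Pick $a\in L_2$ with $a\cdot w_2=-\mathrm{div}_{L_2}(w_2)$. Since now $y_2=0$, this gives $t(e_2,a)(v)=v+\mathrm{div}_{L_2}(w_2)\,e_2$.
\item A second Smith-normal-form pass gives $x_1=\mathrm{div}(v)$, leaving $w_2$ untouched.
\end{enumerate}

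\textbf{A point to state explicitly.} You should say why the normal forms of $v_1$ and $v_2$ have the same leading coefficient. For primitive $v$, the class $[v/\mathrm{div}(v)]$ has order exactly $\mathrm{div}(v)$ in $A_L$, so hypothesis (2) forces $\mathrm{div}(v_1)=\mathrm{div}(v_2)$. After that, your common choice of representative $\mu$ together with hypothesis (1) completes the argument.
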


Finally, we introduce the \textit{Weyl group of reflections} defined by
\begin{equation}
    \label{weylgrp}
    \mathsf{W}(L) := \{ g \in \mathrm{O}^+(L) \mid \overline{g} = \pm \id_{A_L} \},
\end{equation} which is an index~$2$ extension of $\widetilde{\mathrm{O}}^+(L)$. We refer to \cite[Lemma~4.10]{Mar08} for a further characterisation of $\mathsf{W}(L)$.
\subsubsection{The $K3^{[n]}$-lattice}\label{sec_k3nlattice}

We recall that, if $S$ is a K3 surface, then $\H^2(S,\Z)$, equipped with the intersection pairing, is abstractly isomorphic to the unimodular lattice $\Lambda_{K3}:= U^{\oplus 3}\oplus E_8(-1)^{\oplus 2}$ of signature $(3,19)$, where $E_8(-1)$ is the unique even, unimodular, negative definite lattice of rank $8$. We will refer to $\Lambda_{K3}$ as the $K3$-\textit{lattice}. If $X$ is an IHS manifold of $K3^{[n]}$-type, with $n\geq 2$, then its BBF-lattice is abstractly isomorphic to the even lattice $\Lambdan:=\Lambda_{K3}\oplus \brak{-2(n-1)}$ of signature $(3,20)$ and cyclic discriminant of order $2(n-1)$, where $\brak{-2(n-1)}$ is the rank $1$ lattice generated by an element of square $-2(n-1)$. We will refer to $\Lambdan$ as the \textit{$\kn$-lattice}.
\begin{rmk}
    The $K3^{[n]}$-lattice satisfies the hypotheses of Proposition~\ref{prop_eichler}.
\end{rmk}
We choose $\delta$ to be a generator of $\brak{-2(n-1)}$ and fix, once and for all, an identification $\Lambdan = \Lambda_{K3} \oplus \ZZ \delta$. Then, every $w\in\Lambdan$ uniquely decomposes as $w=w_{K3}+m_w\delta$, where $m_w$ is an integer and $w_{K3}\in\Lambda_{K3}$. 
\begin{rmk}\label{rmk_primitiveimpliescoprime}
Note that $w$ is primitive if and only if $\gcd(m_w,\mathrm{div}(w))=1$. This follows by observing that, since $\mathrm{div}(w)\mid \mathrm{div}_{\Lambda_{K3}}(w_{K3})$ and $\Lambda_{K3}$ is unimodular, we can write
\[
w = \mathrm{div}(w)\, w'_{K3} + m_w \delta
\]
for some $w'_{K3}\in \Lambda_{K3}$. 
In particular we have that $\dv(w)$ divides $\delta^2=-2(n-1)$.
\end{rmk}
\begin{lem}\label{lem_latticetheory}
Let $w=w_{K3}+m_w\delta\in\Lambdan=\Lambda_{K3}\oplus\ZZ\delta$ be a primitive element. Then, for every integer $m\equiv \pm m_w \pmod{\mathrm{div}(w)}$, there exists an isometry $g\in\W(\Lambdan)$ such that $g(w)=\mathrm{div}(w)v' + m\delta$, for some primitive $v'\in\Lambda_{K3}$.
\end{lem}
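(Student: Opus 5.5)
The plan is to apply Eichler's criterion (Proposition~\ref{prop_eichler}) to $v_1=w$ and to a suitably constructed target $v_2=\dv(w)v'+m\delta$. I will then correct by $-\id$ on the discriminant if needed, using that $\W(\Lambdan)$ contains elements acting as $-\id_{A_\Lambdan}$.

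Write $d=\dv(w)$. By Remark~\ref{rmk_primitiveimpliescoprime} we have $d\mid 2(n-1)$, $w=d\,w'_{K3}+m_w\delta$, and $\gcd(m_w,d)=1$.

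\textbf{Step 1: construct the target.} Fix $m\equiv \pm m_w \pmod d$; then $\gcd(m,d)=1$. I want a primitive $v'\in\Lambda_{K3}$ with
\[
d^2v'^2-2(n-1)m^2=w^2=d^2w_{K3}'^2-2(n-1)m_w^2 .
\]
This requires $v'^2=w_{K3}'^2+2(n-1)(m^2-m_w^2)/d^2$ to be an even integer.

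Write $m=\pm m_w+td$, so $m^2-m_w^2=\pm 2m_wtd+t^2d^2$. Then
\[
\frac{2(n-1)(m^2-m_w^2)}{d^2}=\pm\frac{2(n-1)}{d}\,2m_wt+2(n-1)t^2 .
\]
This is an even integer because $d\mid 2(n-1)$. So the required value of $v'^2$ is even, and since $\Lambda_{K3}$ contains $U$, I can take $v'=e+\frac{v'^2}{2}f$, which is primitive.

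Set $v_2=d v'+m\delta$. Then $v_2^2=w^2$, and $v_2$ is primitive since $\gcd(m,d)=1$.

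\textbf{Step 2: check divisibility and discriminant classes.} Since $v'$ is primitive in the unimodular lattice $\Lambda_{K3}$, we have $\dv(v_2)=\gcd(d,2(n-1)m)=d$.

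In $A_{\Lambdan}\cong \Z/2(n-1)$, generated by $\delta/2(n-1)$, we get $[w/d]=[m_w\delta/d]$ and $[v_2/d]=[m\delta/d]$. These agree up to sign exactly when $m\equiv\pm m_w \pmod d$, because $m\delta/d$ is determined modulo $\delta\cdot\frac{2(n-1)}{d}\cdot\frac1{2(n-1)}$-multiples; precisely, $(m-m_w)\delta/d\in\Lambdan$ iff $d\mid m-m_w$.

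\textbf{Step 3: apply Eichler.} If $m\equiv m_w$, Proposition~\ref{prop_eichler} gives $\tau\in\widetilde{\OO}^+(\Lambdan)\subset\W(\Lambdan)$ with $\tau(w)=v_2$.

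If $m\equiv -m_w$, I first compose with an element of $\W(\Lambdan)$ acting as $-\id$ on $A_\Lambdan$ and fixing $\Lambda_{K3}$ up to orientation. A candidate is the reflection in $\delta$, namely $\delta\mapsto-\delta$, composed if necessary with a reflection in a $(-2)$-vector of $\Lambda_{K3}$ to restore orientation. This sends $w$ to $d w'_{K3}-m_w\delta$ or to a similar element $d\,w''+(-m_w)\delta$ of the same form. Eichler then applies to this element and $v_2$.

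\textbf{Main obstacle.} The delicate point is the bookkeeping in Step 3: checking that the sign-changing element lies in $\OO^+$, i.e.\ that the reflection in $\delta$, a negative vector, preserves the positive orientation. A reflection in a negative vector does preserve the orientation of positive 3-spaces, so I expect this to work directly.
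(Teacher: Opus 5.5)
Your proof is correct and follows the paper's argument: Eichler's criterion applied to $w$ and $\dv(w)v'+m\delta$ with $v'=e+\tfrac{v'^2}{2}f$, with the sign case handled by the reflection $\delta\mapsto-\delta$. The differences are minor. The paper reduces to $m\equiv m_w$ and post-composes with this reflection, whereas you pre-compose. Your parametrisation $m=\pm m_w+t\,\dv(w)$ also gives a cleaner evenness check than the paper's parity case split.

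The hedge about composing with a $(-2)$-reflection is unnecessary. As you note yourself, the reflection in $\delta$ is the identity on $\Lambda_{K3}\otimes\RR$, which contains a maximal positive subspace. So it already lies in $\OO^+(\Lambdan)$ and acts as $-\id$ on $A_{\Lambdan}$.
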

\begin{proof}
    It suffices to consider the case $m \equiv m_w \pmod{\mathrm{div}(w)}$, since in the case $m \equiv -m_w \pmod{\mathrm{div}(w)}$ we can post-compose with the map
    \begin{align*}
        g\colon \Lambda_{K3}\oplus\mathbb{Z}\delta &\longrightarrow \Lambda_{K3}\oplus\mathbb{Z}\delta \\
            u_{K3}+m\delta &\longmapsto u_{K3}-m\delta
    \end{align*}
    which belongs to $\W(\Lambdan)$. Choose $U=\ZZ e\oplus \ZZ f\subset\Lambda_{K3}$ a hyperbolic plane, so that $e^2=f^2=0$ and $        (e,f)=1$. 
    Since $\widetilde{\mathrm{O}}^{+}(L)\subset\mathsf{W}(L)$, the result follows from Proposition~\ref{prop_eichler} applied to $v_1=w$ and $v_2=\mathrm{div}(w)v' + m\delta$ where
    \begin{equation}\label{v}
        v':=e+\left(\frac{w^2-m^2\delta^2}{2\mathrm{div}(w)^2}\right)f.
    \end{equation}
    We now verify the required conditions:
    \begin{enumerate}
        \item We show that the coefficient of $f$ in \eqref{v} is an integer, showing that $v'$ is a well defined element of $\Lambda_{K3}$. In fact, as explained in Remark~\ref{rmk_primitiveimpliescoprime}, we have a decomposition $w=\mathrm{div}(w)w'_{K3}+m_w\delta$ which yields
        \[
            \frac{w^2-m^2\delta^2}{2\mathrm{div}(w)^2}=\frac{\mathrm{div}(w)^2(w'_{K3})^2+(m_w^2-m^2)\delta^2}{2\mathrm{div}(w)^2}.
        \]
        Both summands in the numerator are divisible by $2\mathrm{div}(w)^2$. Indeed, it holds $2\mathrm{div}(w)^2 \mid \mathrm{div}(w)^2 (w'_{K3})^2$, and also $2\mathrm{div}(w)^2 \mid (m_w^2 - m^2)\delta^2$, since $\mathrm{div}(w) \mid (m_w - m)$ by the definition of $m$, and $2\mathrm{div}(w) \mid (m_w + m)\delta^2$: if $\mathrm{div}(w)$ is odd, then $2\mathrm{div}(w) \mid \delta^2$, while if $\mathrm{div}(w)$ is even, then $2 \mid (m_w + m)$ and $\mathrm{div}(w) \mid \delta^2$.
            \item The element $v'$ is primitive by definition (see \eqref{v}), hence $v_2$ is primitive by Remark~\ref{rmk_primitiveimpliescoprime}, since the primitivity of $w$ implies $\gcd(m_w,\mathrm{div}(w)) = 1$ and consequently $\gcd(m,\mathrm{div}(w)) = 1$.
            \item It is straightforward to check that $v_1^2 = v_2^2$.
            \item Finally, we have an equality $[v_1/\mathrm{div}(v_1)]=[v_2/\mathrm{div}(v_2)]\in A_\Lambdan$. The coefficients of $e$ and $f$ in $v_2$ are both multiples of $\mathrm{div}(w)$ and $(v_2,f)=\mathrm{div}(w)$, so we have $\mathrm{div}(v_2)=\mathrm{div}(w)$. Thus, we obtain \begin{align*}
                 \left[\frac{v_1}{\mathrm{div}(v_1)}\right]- \left[\frac{v_2}{\mathrm{div}(v_2)}\right]&=\left[\frac{w-v_2}{\mathrm{div}(w)}\right]=\left[\frac{\mathrm{div}(w)w'_{K3}+m_w\delta-\mathrm{div}(w)v'-m\delta}{\mathrm{div}(w)}\right]\\
                &=\left[\frac{\mathrm{div}(w)w'_{K3}+(m_w-m)\delta-\mathrm{div}(w)v'}{\mathrm{div}(w)}\right]=0,
            \end{align*}
            where the last equality follows from the fact that all summands in the numerator are divisible by $\mathrm{div}(w)$. \qedhere
        \end{enumerate}
    \end{proof}

\subsection{Monodromy and moduli of IHS manifolds of $K3^{[n]}$-type}
Let us recall that, given a deformation $p\colon \mathcal{X}\to T$ of IHS manifolds, we can define an isometry between the BBF lattices of each pair of fibers as parallel transport operator induced by any continuous path $\gamma$ in $T$ connecting the two respective base points. For any IHS manifold $X$, we denote by $\mon^2(X)$ its \textit{monodromy group}, generated by parallel transport operators from $X$ to itself constructed as above (see \cite{Mar11}).

\begin{rmk}\label{rmk_components}
    The monodromy group $\mon^2(X)$ of an IHS manifold $X$ is a deformation invariant and a finite index subgroup of $\OO^+(\H^2(X,\Z))$ (see \cite[Theorem 3.4]{Ver09} and \cite[Section 4]{Mar11}). Moreover, it plays a key role in the characterisation of the connected components of the moduli space $\mathfrak{M}_\Lambda$ of marked IHS manifolds, for any suitable lattice $\Lambda$. Indeed, the group $\OO(\Lambda)$ acts on each connected component of $\mathfrak{M}_\Lambda$ interchanging the markings, and the monodromy group stabilizes each connected component under this action. More precisely, by \cite[Lemma 7.5]{Mar11}, two marked IHS manifolds $(X_1,\eta_1)$ and $(X_2,\eta_2)$ belong to the same connected component of $\mathfrak{M}_\Lambda$ if and only if $\eta_2^{-1}\circ \eta_1$ is a parallel transport operator.
\end{rmk}
 The monodromy group has been computed for all known deformation classes of IHS manifolds. In the case of our interest, we have the following description.

\begin{thm}[\textit{\cite[Corollary 1.8]{Mar08}, \cite[Theorem 2.2]{Mar10}}]\label{markman_monodromy} Let $X$ be an IHS manifold of $K3^{[n]}$-type, with $n\geq 2$. Then $$\mon^2(X)=\W(\H^2(X,\Z)).$$
\end{thm}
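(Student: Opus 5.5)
The plan is to prove the two inclusions $\mon^2(X)\subseteq \W(\H^2(X,\Z))$ and $\W(\H^2(X,\Z))\subseteq\mon^2(X)$ separately. Since $\mon^2(X)$ is a deformation invariant (Remark~\ref{rmk_components}), we may assume throughout that $X=M_S(v)$ is a moduli space of stable sheaves on a $K3$ surface $S$ with primitive Mukai vector $v$, $v^2=2n-2$. By Mukai and Yoshioka, $\H^2(X,\Z)\simeq v^\perp$ inside the Mukai lattice $\widetilde\Lambda=\Lambda_{K3}\oplus U$. This is a primitive embedding of $\Lambdan$ into a unimodular lattice whose orthogonal complement is $\Z v$, and it identifies $A_{\Lambdan}$ with $A_{\Z v}\simeq\Z/(2n-2)\Z$.

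\textbf{Inclusion $\mon^2(X)\subseteq\W$.} First, every parallel transport preserves the positive cone orientation, since the classes $\mathrm{Re}\,\sigma,\mathrm{Im}\,\sigma,\omega$ vary continuously. Hence $\mon^2(X)\subseteq\OO^+$. Second, I would show that the embedding $\H^2(X,\Z)\hookrightarrow\widetilde\Lambda$ is monodromy invariant up to sign. Markman's construction does this by extracting the rank-one orthogonal line from the Chern character of the universal sheaf, via a characteristic class of the Azumaya/$\kappa$-type considered above, which is canonical up to the sign of the line. It follows that every $g\in\mon^2(X)$ extends to an isometry $\tilde g$ of $\widetilde\Lambda$ with $\tilde g(v)=\pm v$. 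Because $\widetilde\Lambda$ is unimodular, the action of $g$ on $A_{\Lambdan}$ equals the action of $\tilde g$ on $A_{\Z v}$, which is $\pm\id$. Therefore $g\in\W$.

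\textbf{Inclusion $\W\subseteq\mon^2(X)$.} I would use the characterisation of $\W$ as the group generated by the reflections $R_u$ in classes with $u^2=-2$ together with the maps $-R_u$ for $u^2=2$ (\cite[Lemma~4.10]{Mar08}). It then suffices to realise each such generator as a monodromy operator. Moreover, we may reduce to a single representative in each $\W$-orbit of such classes, using Eichler's criterion (Proposition~\ref{prop_eichler}) together with the fact that the set of monodromy-realisable generators is stable under conjugation by monodromy.

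\begin{itemize}
\item For $u^2=-2$, take a deformation of $X$ containing a Lagrangian $\PP^n$, for example a moduli space on a $K3$ surface with a $(-2)$-curve, or a Brill--Noether locus. The Mukai flop along this $\PP^n$ deforms to give a parallel transport equal to $R_u$, where $u$ is the class of the line in the $\PP^n$.
\item For $u^2=2$, I would use the Galois involution of a deformation of $X$ that is a double cover, or O'Grady-type reflections coming from Lagrangian fibrations, to realise $-R_u$.
\end{itemize}

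Finally, the various divisibilities of $u$ must each be covered by choosing the Mukai vector $v$ and the surface $S$ appropriately.

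The main obstacle is the surjectivity step: producing explicit geometric monodromy for every orbit type of $(\pm2)$-classes. The canonical-extension-to-the-Mukai-lattice step also needs care, since it requires the universal sheaf to exist, possibly only as a twisted sheaf.
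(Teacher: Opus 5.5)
The paper gives no proof of this statement; it imports it from Markman, whose argument also splits into your two inclusions.

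For $\mon^2(X)\subseteq\W$ your outline is Markman's. However, the sentence crediting ``Markman's construction'' with the sign-invariance of $\H^2(X,\Z)\hookrightarrow\widetilde\Lambda$ carries the whole weight. By Nikulin's gluing, an isometry of $v^\perp$ extends to $\widetilde\Lambda$ precisely when it acts by $\pm1$ on $A_{\Lambdan}$. So that step is equivalent to the inclusion itself. It is the main result of Markman's ``integral constraints'' paper, obtained from characteristic classes of the universal sheaf. As a citation this is acceptable, but it is not an argument.

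The genuine gap is in $\W\subseteq\mon^2(X)$.

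First, your reduction to orbit representatives is circular. Proposition~\ref{prop_eichler} gives some $\tau\in\widetilde{\OO}^+(\Lambdan)$ with $\tau(u)=u_0$. But $R_u=\tau^{-1}R_{u_0}\tau$ lies in $\mon^2(X)$ only if $\tau$ does, which is what you are trying to prove. Conjugation by monodromy only moves you inside $\mon^2$-orbits, not $\W$-orbits. The monodromy you get for free is $\OO^+(\H^2(S,\Z))$ acting through relative Hilbert schemes. It fixes $\delta$, so it preserves the $\delta$-coefficient and cannot carry a class with nonzero $\delta$-coefficient into $\H^2(S,\Z)$.

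Second, the proposed geometric generators do not work.
\begin{itemize}
\item A Mukai flop $X\dashrightarrow X'$ of a Lagrangian $\PP^n$ produces two non-separated marked pairs, i.e.\ a parallel transport $\H^2(X')\to\H^2(X)$. That is not a loop, so it gives no element of $\mon^2(X)$.
\item For $n\geq2$ there is also no Picard--Lefschetz reflection attached to the flop. The line class satisfies $(\ell,\ell)=-(n+3)/2$ (Hassett--Tschinkel, Bakker), so it is not a $(-2)$-class. For $n=2$, for instance, $2\ell$ has square $-10$ and divisibility $2$, so the reflection in it is not even integral.
\item Reflections come from divisorial contractions: $(-2)$-curves on $S$, or prime exceptional divisors such as the Hilbert--Chow divisor.
\item For $(+2)$-classes, covering involutions exist only sporadically (double EPW sextics for $n=2$).
\item For $n\geq3$ you must exhibit some monodromy acting by $-1$ on $A_{\Lambdan}\cong\Z/(2n-2)\Z$. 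Nothing induced from $S$ does this, since it fixes $\delta$.
\end{itemize}

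Markman supplies the missing transitivity inside the Mukai lattice. He uses deformations and Fourier--Mukai identifications between moduli spaces $M_{S'}(v')$, compatible with Mukai's isometry $\H^2\simeq v^\perp$. With these he shows that the restriction of $\OO^+(\widetilde{\H}(S,\Z))_v$ to $v^\perp$, which is $\widetilde{\OO}^+(\H^2(X,\Z))$, consists of monodromy operators. One further operator acting by $-1$ on the discriminant then yields all of $\W$; the reflection $R_\delta$ coming from the Hilbert--Chow divisor is an example.
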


\begin{rmk}\label{rmkbeauville}
    Let $g\colon \mathcal{S}\rightarrow B$ be a smooth and proper family of $K3$ surfaces, and consider the smooth and proper family $\bar{g}\colon \mathcal{X}\rightarrow B$ given by the relative Douady space of $n$-points associated to $g$. Then, any parallel transport defined in the family $\bar{g}$ preserves the canonical decomposition $\H^2(S^{[n]},\mathbb{Z})=\H^2(S,\mathbb{Z})\oplus\mathbb{Z}\delta_{S^{[n]}}$, by \cite[Section 9, Lemme 2]{Bea83}.
\end{rmk}

\begin{rmk}
    The geometric decomposition of $H^2(S^\nn,\Z)$ in Remark \ref{rmkbeauville} provides a natural way to define marked pairs of hyperkähler manifolds of $K3^{[n]}$-type starting from a marked K3 surface. Indeed, if $\eta\colon \H^2(S,\Z)\to \Lambda_{K3}$ is a marking on a projective K3 surface $S$, then the unique isometry $\eta_{S^{[n]}}\colon \H^2(S^{[n]},\Z)\to \Lambdan$ restricting to $\eta$ on $\H^2(S,\Z)$ and sending $\delta$ to the generator of $\brak{-2(n-1)}$ defines a marking for $S^{[n]}$. If we let $\mathfrak{M}_{K3}^0$ be the connected component of $\mathfrak{M}_{K3}$ containing the pair $(S,\eta)$ and $\mathfrak{M}_\Lambdan^0$ be the connected component of $\mathfrak{M}_\Lambdan$ containing the pair $(S^{[n]},\eta_{S^{[n]}})$, we get a well defined map \begin{align*}
        \theta \colon \mathfrak{M}_{K3}^0&\to \mathfrak{M}_\Lambdan^0\\
        (S',\eta') & \to (S'^{[n]},\eta'^{[n]}).
    \end{align*} By Remark \ref{rmkbeauville}, the map $\theta$ is compatible with the respective period maps. Indeed, the embedding $\Lambda_{K3}\hookrightarrow \Lambdan$ induces a well defined map $\tau\colon D_{K3}\to D_\Lambdan$ between the respective period domains. If we let $P\colon \mathfrak{M}_{K3} \to D_{K3}$ and $P_\Lambdan\colon \mathfrak{M}_{\Lambdan}^0\to D_\Lambdan^0$ be the restrictions of the respective global period maps, then we get the following commutative diagram \begin{equation}\label{diag_markedK3tomarkedk3n}
\begin{tikzcd}
	{\mathfrak{M}_{K3}^0} & {\mathfrak{M}_{\Lambdan}^0} \\
	{D_{K3}} & {D_\Lambdan}
	\arrow["\theta", from=1-1, to=1-2]
	\arrow["{P_{K3}}"', from=1-1, to=2-1]
	\arrow["P_\Lambdan", from=1-2, to=2-2]
	\arrow["\tau", from=2-1, to=2-2].
\end{tikzcd}
\end{equation}
\end{rmk}
    
Combining Lemma~\ref{lem_latticetheory} with Markman’s description of the monodromy group of IHS manifolds of $K3^{[n]}$-type (see Theorem \ref{markman_monodromy}), we obtain the main result of this Section, which will be used to choose suitable markings in Section~\ref{sec_mainthm}.
\begin{lem}\label{lem_monodromy}
     Let $S$ be a $K3$ surface and let $\delta_{S^{[n]}}$ be half the class of the exceptional divisor on $S^{[n]}$. Let $v=v_S+m_v\delta_{S^{[n]}}\in \H^2(S^{[n]},\mathbb{Z})=\H^2(S,\mathbb{Z})\oplus\mathbb{Z}\delta_{S^{[n]}}$ be a primitive vector and let $m\in\mathbb{Z}$ be an integer such that $m \equiv \pm m_v \pmod{\mathrm{div}(v)}$. 
     Then, there exists a monodromy operator $g\colon \H^2(S^{[n]}, \mathbb{Z})\rightarrow \H^2(S^{[n]}, \mathbb{Z})$ such that $g(v)=\mathrm{div}(v) v'+m\delta_{S^{[n]}}$ with $v'\in \H^2(S,\mathbb{Z})$ primitive.
\end{lem}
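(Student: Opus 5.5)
The plan is to transport the lattice-theoretic statement of Lemma~\ref{lem_latticetheory} to $\H^2(S^{[n]},\ZZ)$ and then use Theorem~\ref{markman_monodromy} to recognise the resulting isometry as a monodromy operator.

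First, choose a marking $\eta\colon \H^2(S,\ZZ)\to\Lambda_{K3}$. It induces, as in the Remark following Remark~\ref{rmkbeauville}, a marking $\eta_{S^{[n]}}\colon \H^2(S^{[n]},\ZZ)\to\Lambdan=\Lambda_{K3}\oplus\ZZ\delta$. This marking restricts to $\eta$ on $\H^2(S,\ZZ)$ and sends $\delta_{S^{[n]}}$ to $\delta$. Under this identification, $w:=\eta_{S^{[n]}}(v)=\eta(v_S)+m_v\delta$ is primitive, with $m_w=m_v$ and $\dv(w)=\dv(v)$, since $\eta_{S^{[n]}}$ is an isometry.

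Second, apply Lemma~\ref{lem_latticetheory} to $w$ and to the given $m\equiv\pm m_w\pmod{\dv(w)}$. This yields $\tilde g\in\W(\Lambdan)$ with $\tilde g(w)=\dv(w)u'+m\delta$ for some primitive $u'\in\Lambda_{K3}$. Then set
\[
g:=\eta_{S^{[n]}}^{-1}\circ\tilde g\circ\eta_{S^{[n]}},
\]
which gives $g(v)=\dv(v)\,\eta^{-1}(u')+m\delta_{S^{[n]}}$. Here $v':=\eta^{-1}(u')\in\H^2(S,\ZZ)$ is primitive, because $\eta$ is an isometry of lattices.

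Third, check that $g$ is a monodromy operator. Conjugation by an isometry of lattices preserves both the orientation character and the induced action on the discriminant group. Hence $g\in\W(\H^2(S^{[n]},\ZZ))$, and by Theorem~\ref{markman_monodromy} this group equals $\mon^2(S^{[n]})$.

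The only point needing care is the orientation condition. The marking only identifies positive cones abstractly, so one must confirm that $\OO^+$ is intrinsic, i.e.\ that conjugation preserves the spinor-norm/orientation character. This is standard, so I do not expect a real obstacle; the substantive content lies entirely in Lemma~\ref{lem_latticetheory} and Markman's theorem.
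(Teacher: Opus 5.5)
Your proposal is correct and is essentially the paper's proof. The paper applies Lemma~\ref{lem_latticetheory} directly to $\H^2(S^{[n]},\ZZ)=\H^2(S,\ZZ)\oplus\ZZ\delta_{S^{[n]}}$, tacitly identified with $\Lambdan$, to get $g\in\W(\H^2(S^{[n]},\ZZ))$, and then invokes Theorem~\ref{markman_monodromy}; your explicit marking, together with the check that conjugation by an isometry preserves the orientation character and the action on $A_{\Lambdan}$, just spells out that identification.
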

\begin{proof}
    By Lemma~\ref{lem_latticetheory}, there exists $g\in \W(\H^2(S^{[n]},\ZZ))$ such that $g(v)=\mathrm{div}(v)v'+m\delta_{S^{[n]}}$. The result follows from $\W(\H^2(S^{[n]},\mathbb{Z}))=\mathrm{Mon}^2(S^{[n]})$ (see Theorem~\ref{markman_monodromy}). 
\end{proof}

\subsection{An invariant of primitive classes}
Let $Y$ be a IHS manifold of $K3^{[n]}$-type. By definition, there exists a $K3$ surface $S$ and a parallel transport operator $g\colon \H^2(Y,\mathbb{Z})\rightarrow \H^2(S^{[n]},\mathbb{Z})$. Recall that we have a canonical decomposition $\H^2(S^{[n]},\mathbb{Z})=\H^2(S,\mathbb{Z})\oplus\mathbb{Z}\delta_{S^{[n]}}$. Hence, for every primitive $w\in \H^2(Y,\mathbb{Z})$, we can write $g(w)=v+m\delta$, with $v\in \H^2(S,\mathbb{Z})$. By Remark~\ref{rmk_primitiveimpliescoprime}, we have $\gcd(m,\dv(w))=1$. Therefore, the class $[m]$ defines an element of $(\mathbb{Z}/\dv(w)\mathbb{Z})^*$ and we can consider its image in the quotient of multiplicative groups  $(\mathbb{Z}/\dv(w)\mathbb{Z})^*/\{\pm1\}$.

\begin{rmk}
    We adopt the convention that for $d=1$ the group $(\mathbb{Z}/d\mathbb{Z})^*/\{\pm1\}$ is the trivial group $\{1\}$. In particular, if $\dv(w)=1$, then every integer $m$ defines the same class in $(\mathbb{Z}/\dv(w)\mathbb{Z})^*/\{\pm1\}$.
\end{rmk}

\begin{lem}\label{lem_independencyofparalleltransport}
        The class $[m]\in(\mathbb{Z}/\dv(w)\mathbb{Z})^*/\{\pm1\}$ does not depend on the $K3$ surface $S$ or on the chosen parallel transport.
\end{lem}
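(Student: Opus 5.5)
The plan is to reduce the statement to a property of the discriminant group together with Markman's description of the monodromy group. Let $g_1\colon \H^2(Y,\ZZ)\to \H^2(S_1^{[n]},\ZZ)$ and $g_2\colon \H^2(Y,\ZZ)\to \H^2(S_2^{[n]},\ZZ)$ be two parallel transport operators. Write $g_i(w)=v_i+m_i\delta_{S_i^{[n]}}$. We must show $m_1\equiv \pm m_2 \pmod{\dv(w)}$.

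First, fix a marking $\eta_{S_1}$ of $S_1$. By Remark~\ref{rmkbeauville} and the remark following it, this induces a marking $\eta_{S_1^{[n]}}$ of $S_1^{[n]}$ that sends $\H^2(S_1,\ZZ)$ to $\Lambda_{K3}$ and $\delta_{S_1^{[n]}}$ to $\delta$. Do the same for $S_2$. Since $\Lambda_{K3}$ is unimodular, any isometry $\H^2(S_1,\ZZ)\to\H^2(S_2,\ZZ)$ extended by $\delta\mapsto\delta$ acts trivially on the discriminant. Moreover, $S_1$ and $S_2$ are deformation equivalent, and the relative Hilbert scheme of a family of $K3$ surfaces gives a parallel transport $p\colon \H^2(S_1^{[n]},\ZZ)\to\H^2(S_2^{[n]},\ZZ)$ of exactly this form. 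By Remark~\ref{rmkbeauville}, $p$ preserves the decomposition and sends $\delta_{S_1^{[n]}}$ to $\delta_{S_2^{[n]}}$. (The sign is fixed, since $\delta$ is half the class of the exceptional divisor, which deforms.) This reduces the claim to a single Hilbert scheme: $h:=p^{-1}\circ g_2\circ g_1^{-1}$ is a monodromy operator of $S_1^{[n]}$, and $p^{-1}g_2(w)=p^{-1}(v_2)+m_2\delta_{S_1^{[n]}}$.

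Next, by Theorem~\ref{markman_monodromy}, $h\in \W(\H^2(S_1^{[n]},\ZZ))$, so $h$ acts as $\pm\id$ on the discriminant group $A\cong \ZZ/2(n-1)\ZZ$, which is generated by $[\delta/2(n-1)]$. Write $u=g_1(w)=v_1+m_1\delta$, so that $h(u)=v_2'+m_2\delta$ with $v_2'\in\H^2(S_1,\ZZ)$. Set $d=\dv(w)=\dv(u)$. By Remark~\ref{rmk_primitiveimpliescoprime}, $u=d\,u'+m_1\delta$ with $u'\in\Lambda_{K3}$. Hence the class $[u/d]\in A$ equals $[m_1\delta/d]$, which is the image of $m_1\cdot\frac{2(n-1)}{d}$ times the generator. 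Likewise $[h(u)/d]=[m_2\delta/d]$. The condition $\bar h=\pm\id$ gives $[h(u)/d]=\pm[u/d]$, that is,
\[
\frac{2(n-1)}{d}(m_2\mp m_1)\equiv 0 \pmod{2(n-1)}.
\]
This is equivalent to $m_2\equiv\pm m_1 \pmod d$, using that $d\mid 2(n-1)$.

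The step I expect to need the most care is the reduction to a single Hilbert scheme, namely checking that the transport between different Hilbert schemes preserves $\delta$ with the correct sign rather than possibly sending it to $-\delta$. Even if this failed, however, the sign ambiguity is absorbed by the quotient by $\{\pm1\}$. So one could alternatively argue directly: any parallel transport between two Hilbert schemes, composed with the markings above, lies in $\W(\Lambdan)$, since it is a transport operator and the monodromy group is deformation invariant. The same discriminant computation then applies.
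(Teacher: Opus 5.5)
Your proof is correct and follows essentially the same route as the paper: a decomposition-preserving parallel transport between the two Hilbert schemes, coming from a family of $K3$ surfaces, reduces everything to a monodromy operator, which acts as $\pm\id$ on the discriminant group by Theorem~\ref{markman_monodromy}; you then compare the classes $[m_i\delta/\dv(w)]$. The only cosmetic differences are that you conjugate back to $S_1^{[n]}$ rather than $S_2^{[n]}$, and that you make the final congruence explicit through the generator $[\delta/2(n-1)]$.
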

\begin{proof}
    Consider, for $i=1,2$, two $K3$ surfaces $S_i$ and parallel transports
    \[
        g_i\colon \H^2(Y,\mathbb{Z})\rightarrow \H^2(S_i^{[n]},\mathbb{Z}).
    \]
    In order to simplify the notation, we set $\delta_1:=\delta_{S_1^{[n]}}$ and $\delta_2:=\delta_{S_2^{[n]}}$. Notice that we have canonical decompositions $g_1(w)=w_1+m_1\delta_1$ and $g_2(w)=w_2+m_2\delta_2$.
    By the description of the discriminant group for the $K3^{[n]}$-lattice, we obtain that
    \[
    \left[\frac{g_i(w)}{\mathrm{div}(w)}\right]=\left[\frac{m_i\delta_i}{\mathrm{div}(w)}\right]\in A_{\H^2(S_i^{[n]},\mathbb{Z})}
    \]
    for $i=1,2$.
    The key fact is that, by Remark \ref{rmk_components}, there is a parallel transport $h\colon \H^2(S_1,\mathbb{Z})\rightarrow \H^2(S_2,\mathbb{Z})$ which induces a parallel transport $h'\colon \H^2(S_1^{[n]},\mathbb{Z})\rightarrow \H^2(S_2^{[n]},\mathbb{Z})$, which preserves the canonical decompositions.
    In other words, we have $h'(\H^2(S_1,\mathbb{Z}))=\H^2(S_2,\mathbb{Z})$ and $h'(\delta_1)=\delta_2$.
    Now consider the composition $f:=h'\circ g_1\circ g_2^{-1}$.
    By definition of those maps we get 
    \[f\left[\dfrac{m_2\delta_2}{\dv(w)}\right]=h'\circ g_1\circ g_2^{-1}\left[\dfrac{m_2\delta_2}{\dv(w)}\right]=h'\left[\dfrac{m_1\delta_1}{\dv(w)}\right]=\left[\dfrac{m_1\delta_2}{\dv(w)}\right].\]
    On the other hand, the map $f$ is a monodromy operator by construction, hence, by Theorem \ref{markman_monodromy}, it acts as $\pm 1$ on the discriminant group, yielding 
    \[f\left[\dfrac{m_2\delta_2}{\dv(w)}\right]=\pm\left[\dfrac{m_2\delta_2}{\dv(w)}\right].\]
    We conclude that 
    \[0=\left[\dfrac{m_2\delta_2\pm m_1\delta_2}{\dv(w)}\right]=\left[\dfrac{(m_2\pm m_1)\delta_2}{\dv(w)}\right]\]
    thus, since $\delta_2$ primitive, the claim holds.
    \end{proof}

    Lemma~\ref{lem_independencyofparalleltransport} shows that the following definition is independent of the choice of parallel transport.
\begin{df}\label{def_invariant}
Let $w \in \H^2(Y,\mathbb{Z})$  be a primitive class, and let $g\colon \H^2(Y,\mathbb{Z})\rightarrow \H^2(S^{[n]},\mathbb{Z})$ be a parallel transport operator to a Hilbert scheme, such that $g(w)=v+m\delta_{S^{[n]}}$ under the natural decomposition. We denote by $\ell(w)$ the class $[m]\in(\ZZ/\dv(w)\ZZ)^*/\{\pm 1\}$, where the latter is a quotient of multiplicative groups.
\end{df}

The following is a straightforward consequence of Lemma~\ref{lem_monodromy}.
\begin{lem}\label{lem_paralleltransporttohilb}
        Let $w\in \H^2(Y,\mathbb{Z})$ be primitive and suppose that $m$ is an integer such that $[m]= \ell(w)$ in $(\ZZ/\dv(w)\ZZ)^*/\{\pm 1\}$. Then there exists a $K3$ surface $S$ and a parallel transport $g\colon \H^2(Y,\mathbb{Z})\rightarrow \H^2(S^{[n]},\mathbb{Z})$ such that $g(w)=\mathrm{div}(w) w'+m\delta_{S^{[n]}}$ with $w'\in \H^2(S,\mathbb{Z})$ primitive.
    \end{lem}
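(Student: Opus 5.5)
We begin from the definition of $\ell(w)$. Since $Y$ is of $K3^{[n]}$-type, there exist a $K3$ surface $S$ and a parallel transport operator $g_0\colon \H^2(Y,\ZZ)\to \H^2(S^{[n]},\ZZ)$. Write $g_0(w)=v_S+m_v\delta_{S^{[n]}}$ using the canonical decomposition. By Definition~\ref{def_invariant} and Lemma~\ref{lem_independencyofparalleltransport}, $\ell(w)=[m_v]$ in $(\ZZ/\dv(w)\ZZ)^*/\{\pm1\}$. The hypothesis $[m]=\ell(w)$ therefore says exactly that $m\equiv \pm m_v \pmod{\dv(w)}$. When $\dv(w)=1$ this congruence holds for every $m$, in line with the convention of the preceding remark.

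Set $v:=g_0(w)$. Since $g_0$ is an isometry of lattices, $v$ is primitive and $\dv(v)=\dv(w)$. We can therefore apply Lemma~\ref{lem_monodromy} to $v$ and $m$. It gives a monodromy operator $h\in\mon^2(S^{[n]})$ with $h(v)=\dv(v)v'+m\delta_{S^{[n]}}$, where $v'\in\H^2(S,\ZZ)$ is primitive.

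Define $g:=h\circ g_0$. This is a composition of a parallel transport operator from $Y$ to $S^{[n]}$ with a parallel transport operator from $S^{[n]}$ to itself, so it is again a parallel transport operator: one simply concatenates the two paths in the respective families. It satisfies $g(w)=\dv(w)v'+m\delta_{S^{[n]}}$, and we take $w':=v'$.

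There is no real obstacle. The only points to check are that composing parallel transport operators yields a parallel transport operator, and that divisibility is preserved under isometries. The lattice-theoretic content is already contained in Lemma~\ref{lem_latticetheory}, and the identification of $\mathsf{W}$ with monodromy is Theorem~\ref{markman_monodromy}.
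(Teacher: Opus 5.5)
Your proposal is correct and follows the paper's intended argument: the paper calls this a straightforward consequence of Lemma~\ref{lem_monodromy}, and you fill that in. You start from an arbitrary parallel transport to some $S^{[n]}$, translate $[m]=\ell(w)$ into $m\equiv\pm m_v \pmod{\dv(w)}$, apply Lemma~\ref{lem_monodromy} to obtain a monodromy operator, and compose the two parallel transports.
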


    We conclude the Section with a geometric application of Lemma \ref{lem_paralleltransporttohilb} that will be crucial in the next constructions. Fix the usual identification $\Lambdan=\Lambda_{K3}\oplus\mathbb{Z}\delta$.
        \begin{lem}\label{markingexistence}
            Let $Y$ be an IHS manifold of $K3^{[n]}$-type. Let $w\in \H^2(Y,\mathbb{Z})$ be primitive and $m$ be a positive integer such that $w^2>-m^2(2n-2)$ and  $[m]=\ell(w)$ in $(\ZZ/\dv(w)\ZZ)^*/\{\pm 1\}$. Then there exists a marking $\eta_Y\colon \H^2(Y,\mathbb{Z})\rightarrow\Lambdan$ and $(S,\eta_S)\in\mathfrak{M}_{\Lambda_{K3}}$ a marked $K3$ surface such that
            \begin{enumerate}
                \item [{\normalfont{(1)}}] $(Y,\eta_Y)$ and $(S^{[n]},\eta_{S^{[n]}})$ lie in the same connected component of $\mathfrak{M}_\Lambdan$, where $\eta_{S^{[n]}}$ is the natural marking induced by $\eta_S$;
                \item [{\normalfont{(2)}}] $\eta_Y(w)=\dv(w)v+m\delta\in\Lambdan$, where $v\in\Lambda_{K3}$ is primitive.
                \item [{\normalfont{(3)}}] $\Pic(S)\cong \Z\eta_S^{-1}(v)$.
            \end{enumerate}
        \end{lem}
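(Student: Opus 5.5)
Apply Lemma~\ref{lem_paralleltransporttohilb} to $w$ and $m$. This gives a $K3$ surface $S_0$ and a parallel transport operator $g\colon \H^2(Y,\ZZ)\to \H^2(S_0^{[n]},\ZZ)$ with $g(w)=\dv(w)w'+m\delta_{S_0^{[n]}}$, where $w'\in \H^2(S_0,\ZZ)$ is primitive. Next, choose any marking $\eta_0\colon \H^2(S_0,\ZZ)\to\Lambda_{K3}$ and let $\eta_{S_0^{[n]}}$ be the induced marking. Define
\[
\eta_Y:=\eta_{S_0^{[n]}}\circ g, \qquad v:=\eta_0(w').
\]
Then $\eta_Y(w)=\dv(w)v+m\delta$ with $v$ primitive, which is (2). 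Moreover $\eta_{S_0^{[n]}}^{-1}\circ\eta_Y=g$ is a parallel transport operator, so by Remark~\ref{rmk_components} the pairs $(Y,\eta_Y)$ and $(S_0^{[n]},\eta_{S_0^{[n]}})$ lie in the same connected component $\mathfrak{M}^0_{\Lambdan}$.

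It remains to replace $S_0$ by a marked $K3$ surface $(S,\eta_S)$ in the same component $\mathfrak{M}^0_{K3}$ as $(S_0,\eta_0)$ with $\Pic(S)=\ZZ\eta_S^{-1}(v)$. For this, compute
\[
w^2=\dv(w)^2v^2+m^2\delta^2=\dv(w)^2v^2-m^2(2n-2).
\]
The hypothesis $w^2>-m^2(2n-2)$ is therefore exactly $v^2>0$. Choose a period $p\in D_{K3}$ lying in $v^\perp$ and very general there, so that $p^\perp\cap\Lambda_{K3}=\ZZ v$. This is possible because $v^\perp$ is a hyperplane section of $D_{K3}$ and we only need to avoid countably many proper analytic subsets $\{x^\perp\}$ for $x\notin\QQ v$.

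By surjectivity of the period map restricted to the connected component $\mathfrak{M}^0_{K3}$, there is $(S,\eta_S)\in\mathfrak{M}^0_{K3}$ with period $p$. Then $\NS(S)=\ZZ\eta_S^{-1}(v)$, and since $v^2>0$ the surface $S$ is projective with $\Pic(S)\cong\ZZ\eta_S^{-1}(v)$. This gives (3).

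Finally, the map $\theta$ of diagram~\eqref{diag_markedK3tomarkedk3n} sends the connected component $\mathfrak{M}^0_{K3}$ into a single connected component of $\mathfrak{M}_{\Lambdan}$. That component contains $\theta(S_0,\eta_0)=(S_0^{[n]},\eta_{S_0^{[n]}})$, hence it is $\mathfrak{M}^0_{\Lambdan}$. So $(S^{[n]},\eta_{S^{[n]}})$ lies in the same component as $(Y,\eta_Y)$, which is (1). The marking $\eta_Y$ is unchanged, so (2) still holds.

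The main obstacle is this last step. We must ensure that the surface with the prescribed period lies in the right component of the $K3$ moduli space, and that the induced Hilbert-scheme marking stays in the component containing $(Y,\eta_Y)$. The first point is handled by surjectivity of the period map on each component of $\mathfrak{M}_{K3}$. The second follows from the continuity of $\theta$, which is based on Remark~\ref{rmkbeauville}.
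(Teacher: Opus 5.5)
Your proposal is correct and follows the paper's proof step for step. It uses Lemma~\ref{lem_paralleltransporttohilb} to get $g$, sets $\eta_Y:=\eta_{S_0^{[n]}}\circ g$, notes that the hypothesis is equivalent to $v^2>0$, applies surjectivity of the period map on the component of $(S_0,\eta_0)$ to get $\Pic(S)\cong\ZZ\eta_S^{-1}(v)$, and uses the map $\theta$ to keep $(S^{[n]},\eta_{S^{[n]}})$ in the component of $(Y,\eta_Y)$; your extra details (the very general period in $v^\perp$, and why $\theta$ preserves components) only make explicit what the paper leaves implicit.
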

        \begin{proof}
    By Lemma~\ref{lem_paralleltransporttohilb}, there exists a $K3$ surface $S'$ and a parallel transport operator $g\colon \H^2(Y,\mathbb{Z})\rightarrow \H^2(S'^{[n]},\mathbb{Z})$ such that $g(w)=\mathrm{div}(w) w'+m\delta_{S'^{[n]}}$ with $w'\in \H^2(S',\mathbb{Z})$ primitive. Choose any marking $\eta_{S'}$ on the $K3$ surface $S'$ and define $v:=\eta_{S'}(w')$. Then, we can consider $(S'^{[n]},\eta_{S'^{[n]}}):=\theta(S',\eta_{S'})$, following the notation of Diagram~\ref{diag_markedK3tomarkedk3n}. By assumption $w^2+m^2(2n-2)>0$, we obtain $(w')^2>0$ and hence $v^2>0$.
    A consequence of the surjectivity of the period map for $K3$ surfaces (see \cite[Corollary 14.3.1]{HuyK3}), there is some $(S,\eta_S)$ in the same connected component of $(S',\eta_{S'})$ such that $\Pic(S)\cong \Z\eta_S^{-1}(v)$. Then $(S^{[n]},\eta_{S^{[n]}}):=\theta(S,\eta_S)$ is in the same connected component as $(S'^{[n]},\eta_{S'^{[n]}})$.
    Therefore, the result follows by setting $\eta_Y:=\eta_{S'^{[n]}}\circ g$. This implies that $(Y,\eta_Y)$ and $(S'^{[n]},\eta_{S'^{[n]}})$ are in the same connected component and therefore also $(Y,\eta_Y)$ and $(S^{[n]},\eta_{S^{[n]}})$ are.
\end{proof}

For completeness, we include here a Lemma describing the orbit of the $K3$-part under monodromy, once the coefficient of $\delta_{S^{[n]}}$ is fixed.
\begin{lem}
    Let $S$ be a $K3$ surface. Let $w=w_{K3}+m\delta_{S^{[n]}}\in \H^2(S^{[n]},\mathbb{Z})=\H^2(S,\mathbb{Z})\oplus\mathbb{Z}\delta_{S^{[n]}}$ be primitive. Pick $w'_{K3}\in \H^2(S,\mathbb{Z})$. Then there exists $g\in \mathrm{Mon}^2(S^{[n]})$ such that $g(w)=w'_{K3}+m\delta_{S^{[n]}}$ if and only if the following three conditions hold:
    \begin{enumerate}
        \item [{\normalfont{(1)}}] $(w'_{K3})^2=(w_{K3})^2$;
        \item [{\normalfont{(2)}}] $\gcd(\dv(w'_{K3}),m)=1$;
        \item [{\normalfont{(3)}}] $\dv(w)=\gcd(\dv(w'_{K3}),2n-2)$.
    \end{enumerate}
\end{lem}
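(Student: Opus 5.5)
Both directions rest on Theorem~\ref{markman_monodromy}, which identifies $\mathrm{Mon}^2(S^{[n]})$ with $\W(\H^2(S^{[n]},\ZZ))$. Set $w':=w'_{K3}+m\delta_{S^{[n]}}$. The plan is to show that $w$ and $w'$ are related by an element of $\W$ exactly when (1)--(3) hold, using Eichler's criterion (Proposition~\ref{prop_eichler}) for sufficiency and the invariance of square and divisibility for necessity. Throughout we use that $\H^2(S,\ZZ)$ is unimodular and that $\delta^2=-2(n-1)$. Consequently, for $u=u_{K3}+m\delta$ we have
\[
\dv(u)=\gcd\bigl(\dv(u_{K3}),\,2(n-1)m\bigr).
\]
If $\gcd(\dv(u_{K3}),m)=1$, this reduces to $\gcd(\dv(u_{K3}),2n-2)$.

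\emph{Necessity.} Let $g\in\mathrm{Mon}^2$ with $g(w)=w'$. Since $g$ is an isometry, $w'^2=w^2$, and comparing $\delta$-parts gives $(w'_{K3})^2=w_{K3}^2$, which is (1). Since $g$ preserves primitivity, $w'$ is primitive. Then (2) follows: a common prime divisor of $\dv(w'_{K3})$ and $m$ would divide every coefficient of $w'$ with respect to a basis of $\H^2(S,\ZZ)\oplus\ZZ\delta$, because $w'_{K3}$ is $\dv(w'_{K3})$ times an element of the unimodular lattice $\H^2(S,\ZZ)$, contradicting primitivity. Isometries preserve divisibility, so $\dv(w)=\dv(w')$, and the formula above together with (2) gives (3).

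\emph{Sufficiency.} Assume (1)--(3). By (2) and Remark~\ref{rmk_primitiveimpliescoprime}, $w'$ is primitive; the remark's criterion applies because (3) yields $\gcd(m,\dv(w'))=1$. By (1), $w'^2=w^2$, and by (3) together with the formula above, $\dv(w')=\dv(w)=:d$. It remains to compare the classes in $A_\Lambdan\cong\ZZ/(2n-2)$ and apply Proposition~\ref{prop_eichler}.
\begin{itemize}
\item Since $d\mid\dv(w_{K3})$ and $d\mid \dv(w'_{K3})$, the elements $w_{K3}/d$ and $w'_{K3}/d$ lie in the unimodular lattice $\H^2(S,\ZZ)$.
\item Hence $[w/d]=[m\delta/d]=[w'/d]$ in $A_\Lambdan$, because the $\delta$-coefficients agree.
\item Proposition~\ref{prop_eichler} then yields $\tau\in\widetilde{\OO}^+\subset\W=\mathrm{Mon}^2$ with $\tau(w)=w'$.
\end{itemize}

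\emph{Expected obstacle.} The delicate point is the divisibility bookkeeping: checking that $\dv(w)=\gcd(\dv(w_{K3}),2n-2)$ holds for $w$ itself, and that $d$ divides $\dv(w_{K3})$. The first follows from the primitivity of $w$ exactly as for $w'$, so that (3) really gives $\dv(w')=\dv(w)$. For the second, take $\lambda\in\H^2(S,\ZZ)$; then $d\mid(w,\lambda)=(w_{K3},\lambda)$. This is the same computation as in Remark~\ref{rmk_primitiveimpliescoprime}.
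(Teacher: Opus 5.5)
Your proof is correct and follows the same route as the paper. Necessity holds because monodromy operators preserve squares, primitivity and divisibility, and sufficiency follows from Eichler's criterion (Proposition~\ref{prop_eichler}) combined with $\mathrm{Mon}^2(S^{[n]})=\W(\H^2(S^{[n]},\ZZ))$; you simply supply the divisibility bookkeeping that the paper leaves implicit, namely the formula $\dv(u)=\gcd(\dv(u_{K3}),2(n-1)m)$ and the equality of discriminant classes $[w/d]=[m\delta/d]=[w'/d]$.
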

\begin{proof}
    The conditions are easily seen to be necessary, since a monodromy operator preserves squares, primitivity and divisibility. They are also sufficient, by Eichler's criterion Proposition~\ref{prop_eichler} .
\end{proof}

\section{Moduli spaces of modular vector bundles}\label{sec_mainthm}
    In this Section, we present the proof of our main result.
\begin{thm}\label{thm_main}
Let $n\geq 2$ be an integer, let $Y$ be a projective IHS manifold of $K3^{[n]}$-type and let $w \in \NS(Y)$ be a primitive class. If $\dv(w)$ is odd, consider a positive even integer $r$ such that : \begin{enumerate}[label=(\alph*)]
    \item [\mylabel{a}{\normalfont{(a)}}]  $\left[\frac{r}{2}\right]=\ell(w)\in (\Z/\dv(w)\Z)^*/\{\pm 1\}$;
    \item [\mylabel{b}{\normalfont{(b)}}] $\frac{w^2}{2r}+ \frac{r}{4}(n-1)$ is a positive integer;
    \item [\mylabel{c}{\normalfont{(c)}}] $\frac{w^2}{2r}+ \frac{r}{4}(n-1)$ is coprime with $r$.
\end{enumerate}
If $\dv(w)$ is even, consider a positive integer $r\geq 3$ such that:
\begin{enumerate}[label=(\alph*')]
    \item [\mylabel{a2}{\normalfont{(a')}}]   $[r]=\ell(w)\in (\Z/\dv(w)\Z)^*/\{\pm 1\}$;
    \item [\mylabel{b2}{\normalfont{(b')}}] $\frac{w^2}{2r} + r(n-1)$ is a positive integer;
    \item [\mylabel{c2}{\normalfont{(c')}}] $\frac{w^2}{2r}$ is coprime with $r$.
\end{enumerate}
Then, there exists a polarisation $H$ on $Y$ and a connected component $X$ of the moduli space $M_{Y,H}$ of Gieseker $H$-semistable sheaves on $Y$, such that 
\begin{enumerate}[label=$\bullet$]
    \item $X$ is an IHS manifold of $K3^{[n]}$-type.

    \item $X$ parametrises $\mu_H$-stable modular vector bundles $E$ on $Y$ with the following invariants:

    \begin{minipage}[l]{0.40\textwidth}
    \begin{equation*}
        \cc_1(E)= \bigg\{\begin{array}{ll}
           r^{n-1}n! w  & \textit{if }\dv(w) \textit{ is odd} \\
           r^{n-1}n! \frac{w}{2}  & \textit{if }\dv(w) \textit{ is even}
        \end{array}
    \end{equation*}
\end{minipage}
\hspace{1ex}
\begin{minipage}[l]{0.35\textwidth}
    \begin{align*}
       \rk(E)&= r^n n!\\
        \Delta(E) &= \frac{(r^n n!)^2}{12}\cc_2(Y).\end{align*}
\end{minipage}
\vspace{0.5ex}

    \item The Fourier-Mukai transform $$\Phi_\kE \colon \D(X) \to \D(Y)$$ with kernel $\kE$ is an equivalence, where $\mathcal{E}$ is a  universal family on $X\times Y$.

\end{enumerate}    
\end{thm}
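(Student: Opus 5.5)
The plan is to realise $Y$ as the second factor of a point of $\mathfrak{M}^\circ_{\psin}$ coming from a suitable Construction~\ref{example1}. Then we apply Theorem~\ref{thm_compatiblevectorbundle}, kill both Brauer classes with Lemma~\ref{lem_sufficientcondition}, and conclude with Lemma~\ref{lem_Ymodulisp}.

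\emph{Step 1 (choice of the Mukai vector).} Consider first $\dv(w)$ odd. We want $a_{S^{[n]}}=r^{n-1}n!\,(mh-\tfrac r2\delta_{S^{[n]}})$ to be transported to $r^{n-1}n!\,w$ on $Y$. So we apply Lemma~\ref{markingexistence} with the integer $r/2$, which is allowed by \ref{a}, after possibly replacing $w$ by a sign-adjusted representative of $\ell(w)$. Its positivity hypothesis $w^2>-(r/2)^2(2n-2)$ follows from \ref{b}. This gives a marking with $\eta_Y(w)=\dv(w)v\pm\frac r2\delta$, and a marked $K3$ surface $(S,\eta_S)$ with $\Pic(S)=\Z h$, where $h=\eta_S^{-1}(v)$ and $(Y,\eta_Y)$ lies in the component of $(S^{[n]},\eta_{S^{[n]}})$. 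The sign of $\delta$ is fixed using the isometry $\delta\mapsto-\delta$, which is in $\W$.

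We then put $m=\dv(w)$ and $u=(r,mh,s)$. Isotropy forces $s=m^2h^2/(2r)$. Since $w^2=m^2h^2-\frac{r^2}{4}(2n-2)$, this gives $s=\frac{w^2}{2r}+\frac r4(n-1)$, so \ref{b} and \ref{c} say exactly that $s$ is a positive integer coprime to $r$. Moreover $\gcd(r,m)=1$, because $m$ is odd and $r/2$ is a unit modulo $\dv(w)$. Hence $M=M_S(u)$ is a fine $K3$ moduli space of stable bundles.

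The case $\dv(w)$ even is analogous. We use $[r]=\ell(w)$ to get $\eta_Y(w)=\dv(w)v\pm r\delta$, so that $a_{S^{[n]}}$ corresponds to $r^{n-1}n!\,w/2$ with $m=\dv(w)/2$. We then check, by the same bookkeeping, that $s$ is integral and coprime to $r$ using \ref{b2} and \ref{c2}. I expect to have to redo this arithmetic carefully: the normalisation of $h$ versus $v$, and the factor $2$, must be tracked so that $s$ comes out exactly as the quantity in \ref{b2}.

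\emph{Step 2 (deformation and untwisting).} By Lemma~\ref{lem_choosek} and Remark~\ref{rmk_nicotranquillo}, we may take $\frac{k+m}{2m}\in\Z$. Lemma~\ref{lem_pisurj} gives $(X,\eta_X)$ with $(X,\eta_X,Y,\eta_Y)\in\mathfrak{M}^\circ_{\psin}$. Theorem~\ref{thm_compatiblevectorbundle} then provides $\kE$ and a twisted equivalence. Since $\eta_Y^{-1}\eta_{S^{[n]}}(a_{S^{[n]}})$ is a multiple of $w\in\NS(Y)$, Lemma~\ref{lem_sufficientcondition} gives $\alpha_X=\alpha_Y=0$, so $\Phi_\kE\colon\D(X)\to\D(Y)$ is an untwisted equivalence.

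By Remark~\ref{rmk_trasp_c1}, after twisting by a line bundle we get $\cc_1(\kE_x)$ equal to the stated multiple of $w$, and the rank is $r^nn!$. The discriminant depends only on $\cEnd$, and $\cc_2$ is a monodromy-invariant class. Hence \eqref{eq_modularity1} is transported to $\Delta(\kE_x)=\frac{(r^nn!)^2}{12}\cc_2(Y)$, which gives modularity. Finally, $X$ is projective: $\rathodge{\kE}^{-1}(w)$ is an integral $(1,1)$-class of positive square, so Huybrechts' projectivity criterion applies.

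\emph{Step 3 (stability; main obstacle).} We must find an ample $H$ on $Y$ such that every $\kE_x$ is $\mu_H$-stable. Theorem~\ref{thm_compatiblevectorbundle} only gives stability for one Kähler class $\omega_Y$, which need not be algebraic. Here I would invoke the appendix results Lemma~\ref{lem_fibersarestable} and Lemma~\ref{lem_omegaimpliesH}. The first uses the symmetry of the Hermite--Einstein connection on $\cEnd(\kE)$ to deduce slope stability of all fibres $\kE_x$ with respect to $\omega_Y$. The second moves from $\omega_Y$ to a nearby ample class $H$ in the same chamber; modularity makes the wall structure depend only on $\ch$, so the chamber is well defined.

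Slope stability implies Gieseker stability. Lemma~\ref{lem_Ymodulisp} then identifies $X$ with a connected component of $M_{Y,H}$, with $\kE$ as universal family, which completes the proof.
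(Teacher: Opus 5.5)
Your proposal follows the paper's proof step by step: the marking from Lemma~\ref{markingexistence}, the Mukai vector $u$, Lemma~\ref{lem_pisurj} with Theorem~\ref{thm_compatiblevectorbundle}, untwisting by Lemma~\ref{lem_sufficientcondition}, then the appendix lemmas and Lemma~\ref{lem_Ymodulisp}. Your explicit treatment of the sign of $\delta$ via $\delta\mapsto-\delta\in\W(\Lambdan)$ is fine. Three points need correcting.

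\textbf{Stable versus polystable.} Lemma~\ref{lem_fibersarestable} only gives that each $\kE_x$ is $\omega_Y$-slope \emph{polystable}. Restricting the hyperholomorphic connection of $\cEnd(\kE)$ to $\{x\}\times Y$ does not prevent $\kE_x$ from splitting. The missing step is simplicity, which you get from the equivalence you already have: $\Hom(\kE_x,\kE_x)\cong\Hom(\CC(x),\CC(x))=\CC$ (C\u{a}ld\u{a}raru). A simple polystable bundle is stable, and only after this can you feed the $\kE_x$ into Lemma~\ref{lem_omegaimpliesH}. Two further remarks on that lemma: it uses no modularity, and it does not look for an ample class near $\omega_Y$, which may lie outside $\NS(Y)_\RR$. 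Instead it projects $\omega_Y$ to $\NS(Y)_\RR$, which leaves the signs of $b_Y(\lambda_{F,E},-)$ unchanged, and then perturbs to a rational class.

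\textbf{Even divisibility.} Your expectation that $s$ equals the quantity in (b$'$) is off by a factor $4$. From $\eta_Y(w)=\dv(w)v-r\delta$ you get $\dv(w)^2h^2=w^2+2r^2(n-1)$, hence
\[
s=\frac{\dv(w)^2h^2}{8r}=\frac14\Bigl(\frac{w^2}{2r}+r(n-1)\Bigr).
\]
It is nevertheless a positive integer coprime to $r$. Since $[r]$ is a unit modulo the even number $\dv(w)$, $r$ is odd and coprime to $\dv(w)/2$. Then $rs=(\dv(w)/2)^2h^2/2\in\ZZ$ and $4s\in\ZZ$ by (b$'$), so $s\in\ZZ$. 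Finally, (c$'$) gives $\gcd(s,r)=\gcd(4s,r)=\gcd\bigl(\tfrac{w^2}{2r},r\bigr)=1$.

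\textbf{Projectivity of $X$.} The hypotheses do not force $w^2>0$: for instance $n=4$, $\dv(w)=1$, $w^2=-2$, $r=2$ satisfy (a)--(c). So $\rathodge{\kE}^{-1}(w)$ need not have positive square. Use instead $\rathodge{\kE}^{-1}(H_Y)$ for an ample class $H_Y$ on $Y$. It is a rational $(1,1)$-class of positive square on $X$, so Huybrechts' criterion applies, $X$ is projective, and Lemma~\ref{lem_Ymodulisp} can be invoked.
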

\begin{proof}[Proof of Theorem \ref{thm_main} for $\dv(w)$ odd]
The idea of the proof is to construct the universal family $(\kE,\alpha_\kE)$ on $X \times Y$, as in Theorem~\ref{thm_compatiblevectorbundle}, in such a way that the twist vanishes. Thus $\kE$ is a family of untwisted vector bundles on $Y$. We then prove that the fibers over points $x \in X$ are stable with respect to a suitable polarisation and show that the induced map from $X$ to the moduli space is an isomorphism onto a connected component.

By \ref{a} and \ref{b} we have $\ell(w)=\left[r/2\right]$ and $w^2 > -\frac{r}{4}(n-1)\cdot 2r = -(r/2)^2(2n-2)$, hence we can apply Lemma~\ref{markingexistence} with $m = r/2$. Therefore, there exists a marked Hilbert scheme $(S^{[n]},\eta_{S^{[n]}})$ and a marking $\eta_Y\colon \H^2(Y,\mathbb{Z})\rightarrow \Lambdan$, such that
\begin{enumerate}[label=(\arabic*)]
    \item $(Y,\eta_Y)$ and $(S^{[n]},\eta_{S^{[n]}})$ lie in the same connected component of $\mathfrak{M}_\Lambdan$, which we denote by $\mathfrak{M}^\circ_\Lambdan$;
    \item\label{2} $\eta_Y(w)=\dv(w)v-\frac{r}{2}\delta\in\Lambdan$, where $v\in\Lambda_{K3}$ is primitive.
    \item\label{3} $\Pic(S)\cong \Z h$, where $h:=\eta_S^{-1}(v)$.
\end{enumerate}
By \ref{b} and \ref{2} the isotropic Mukai vector 
\begin{equation}\label{Mukai_odd}
    u:= \left(r,\;\dv(w)h,\;\frac{\dv(w)^2h^2}{2r}\right) \in \H^{\ast}(S,\ZZ)
\end{equation} 
is well-defined.
Moreover, by \ref{a} and \ref{c} we have
$$\gcd(r,\;\dv(w))=1=\gcd\left(r,\;\frac{\dv(w)^2h^2}{2r}\right),$$
in particular $u$ is primitive.
Hence, we can perform Construction~\ref{example1}, which yields a projectively hyperholomorphic vector bundle $\mathcal{U}^{[n]} \in \Coh(M^{[n]}\times S^{[n]})$, where $M:= M_S(u)$ is another $K3$ surface. Without loss of generality, in the construction we can choose $\mathcal{U}^{[n]}\in\Coh(M^{[n]}\times S^{[n]})$ as in Remark~\ref{rmk_nicotranquillo}. Moreover, there exist a marking $\eta_{M^{[n]}}$ and a rational isometry $\psin \colon \Lambda_{K3^{[n]},\mathbb{Q}} \to \Lambda_{K3^{[n]},\mathbb{Q}}$ such that $\eta_{S^{[n]}} \circ \psin \circ \eta_{M^{[n]}}$ is a rational Hodge isometry sending some K\"ahler class to a K\"ahler class (see Section \ref{sec_vanishing_twist}). We denote by $\mathfrak{M}^\circ_{\psin}$ the connected component of the moduli space of marked pairs containing $(M^{[n]},\eta_{M^{[n]}},S^{[n]},\eta_{S^{[n]}})$.

Since $(Y,\eta_Y)$ and $(S^{[n]},\eta_{S^{[n]}})$ both lie in $\mathfrak{M}^\circ_\Lambdan$, by Lemma~\ref{lem_pisurj} there exists $(X,\eta_X,Y,\eta_Y) \in \mathfrak{M}^\circ_{\psin}$. As a consequence, by Theorem~\ref{thm_compatiblevectorbundle}, there exist a split K\"ahler class $\omega_{X\times Y}$ and a projectively $\omega_{X\times Y}$-stable hyperholomorphic vector bundle $(\kE,\alpha_\kE)$ on $X\times Y$, which is the kernel of a twisted Fourier--Mukai equivalence
\[
\phi_{\mathcal{E}}\colon \D(X,\alpha_X^{-1}) \to \D(Y,\alpha_Y),
\]
where $\alpha_\kE = \alpha_X + \alpha_Y \in \Br(X\times Y) = \Br(X) \oplus \Br(Y)$.

Since $\eta_Y^{-1}\circ\eta_{S^{[n]}}\cc_1\left(\mathcal{U}^{[n]}|_{\{x\}\times Y}\right)\in\QQ w\subset\Pic_{\QQ}(Y)$, by Lemma~\ref{lem_sufficientcondition}, we have $\alpha_X=0$ and $\alpha_Y=0$.
By Theorem~\ref{thm_compatiblevectorbundle}~(3), together with Remark~\ref{rmk_trasp_c1}, we conclude that the fibers, up to tensoring with a holomorphic line bundle $L\in\Pic(Y)$ satisfy
\begin{equation*}
    \cc_1(\mathcal{E}_x)= 
           r^{n-1}n! w 
\end{equation*}
The computation of $\Delta(\mathcal{E}_x)$ follows from the fact that $\kappa(\mathcal{E})$ is obtained from parallel transport from $\kappa(\mathcal{U}^{[n]})$ and Equations~\eqref{rank1},\eqref{c11},\eqref{eq_modularity1}. 
As a consequence, for all $x \in X$, the bundle $\kE|_{\{x\}\times Y}$ is $\omega_Y$-slope stable. Indeed, by Lemma~\ref{lem_fibersarestable}, the restriction $\kE|_{\{x\}\times Y}$ is $\omega_Y$-slope polystable; moreover, it is simple by \cite[Theorem~3.2.1]{caldararu-thesis}. By Lemma~\ref{lem_omegaimpliesH}, there exists an ample divisor $H \in \Pic(Y)$ such that $\kE|_{\{x\}\times Y}$ is $H$-slope stable, for all $x\in X$. We conclude by Lemma~\ref{lem_Ymodulisp} that $X$ can be identified with a connected component of the moduli space of Gieseker $H$-semistable sheaves.
\end{proof}
\begin{proof}[Proof of Theorem~\ref{thm_main} for $\dv(w)$ even]
The idea of the proof is the same as in the previous case; we explain here the differences. By \ref{a2} and \ref{b2} we have $\ell(w) = [r]$ and $w^2 > -r^2(2n-2)$, so that we can apply Lemma~\ref{markingexistence} with $m = r$. Hence, there exist a marked Hilbert scheme $(S^{[n]},\eta_{S^{[n]}})$ and a marking $\eta_Y \colon \H^2(Y,\mathbb{Z}) \to \Lambdan$ such that
\begin{enumerate}[label=(\arabic*')]
    \item $(Y,\eta_Y)$ and $(S^{[n]},\eta_{S^{[n]}})$ lie in the same connected component of $\mathfrak{M}_\Lambdan$, which we denote by $\mathfrak{M}^\circ_\Lambdan$;
    \item \label{2'} $\eta_Y(w) = \dv(w)v - r\delta \in \Lambdan$, where $v \in \Lambda_{K3}$ is primitive;
    \item $\Pic(S) \cong \Z h$, where $h := \eta_S^{-1}(v)$.
\end{enumerate}
As in the previous case, we can perform Construction~\ref{example1}, but this time we choose
\begin{equation}\label{Mukai_even}
    u := \left(r,\;\frac{\dv(w)}{2}h,\;\frac{\dv(w)^2 h^2}{8r}\right) \in \H^{\ast}(S,\ZZ),
\end{equation}
which can be shown to be well-defined, primitive, isotropic, and such that
\[
\gcd\left(r,\; \frac{\dv(w)}{2}\right) = 1= \gcd\left(r,\; \frac{\dv(w)^2 h^2}{8r}\right),
\]
using \ref{2'}, \ref{a2}, \ref{b2} and \ref{c2}. The rest of the proof is identical to the previous case.
\end{proof}

\begin{rmkn}
    The vector bundles parametrised by the moduli spaces in Theorem~\ref{thm_main} are atomic (see \cite{beckmann}). Indeed they are the image through a derived equivalence of skyscraper sheaves of  points, which are atomic, and atomicity is preserved under derived equivalence.
\end{rmkn}

\subsection{Corollaries of the main result}
In this Section, we apply results of Markman~\cite{markman}, Taelman~\cite{tae23} and Huybrechts--Stellari~\cite{paolo_huy} to compare the Hodge structures of the moduli spaces involved in Theorem \ref{thm_main}.

We recall that the rational cohomology of any smooth projective variety $Z$ is naturally equipped with a lattice structure, given by the Mukai pairing, and with the following weight $0$ Hodge structure \begin{equation}
    \label{HSuntwisted} H^{-k,k}(Z):= \bigoplus_{p-q=k} H^{p,q}(Z),
\end{equation} for any $k= -\dim(Z),\dots, \dim(Z)$ (see \cite[Section 6.1]{Huy03}).

From Theorem \ref{thm_main}, combined with \cite[Proposition 5.39, 5.44]{Huy03}, we deduce the following.
\begin{cor}\label{cor_1}
	In the setup of Theorem \ref{thm_main}, 
 the cohomological Fourier--Mukai transform 
	\begin{align*}
		\tau_{\kE}\colon \H^{\ast}(X,\QQ)&\longrightarrow \H^{\ast}(Y,\QQ)\\
        \alpha &\longmapsto p_{Y,*}\left(p_X^*\alpha\cdot\ch(\kE)\sqrt{\td_{X\times Y}}\right)
	\end{align*}
    is an isometry with respect to the Mukai pairing and an isomorphism of weight $0$ Hodge structures, with respect to (\ref{HSuntwisted}).
\end{cor}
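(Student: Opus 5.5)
The plan is to deduce the statement from Theorem~\ref{thm_main} together with the general theory of cohomological Fourier--Mukai transforms, following \cite[Propositions 5.39, 5.44]{Huy03}. By Theorem~\ref{thm_main}, the kernel $\kE$ on $X\times Y$ is an untwisted locally free sheaf, and $\Phi_\kE\colon\D(X)\to\D(Y)$ is an exact equivalence between derived categories of smooth projective varieties. Orlov's theorem then says that every such equivalence is of Fourier--Mukai type with a unique kernel. The induced map on rational cohomology is $\tau_\kE$, with Mukai vector $v(\kE)=\ch(\kE)\sqrt{\td_{X\times Y}}$.

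The first step is to check that $\tau_\kE$ is functorial. The key ingredient is that composing Fourier--Mukai kernels corresponds to composing correspondences on the level of Mukai vectors; this is the Grothendieck--Riemann--Roch computation in \cite[Proposition 5.38]{Huy03}. The quasi-inverse $\Phi_\kE^{-1}$ has kernel $\kE^\vee\otimes p_X^*\omega_X[\dim X]$, which equals $\kE^\vee[2n]$ since $\omega_X$ is trivial. Composing in both directions gives kernels $\kO_\Delta$, whose cohomological transform is the identity. Hence $\tau_\kE$ is bijective with inverse $\tau_{\kE^\vee[2n]}$.

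The second step is the isometry property. By \cite[Proposition 5.44]{Huy03}, the Mukai pairing $\langle\alpha,\beta\rangle=\int\alpha^\vee\cdot\beta\cdot e^{c_1/2}$ satisfies $\langle\tau_\kE\alpha,\beta\rangle=\langle\alpha,\tau_{\kE_R}\beta\rangle$, where $\kE_R$ is the kernel of the right adjoint. For an equivalence, the right adjoint is the inverse, so $\tau_\kE$ is an isometry. Here $c_1$ of both $X$ and $Y$ vanishes, so the twist factor disappears.

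The third step is compatibility with the weight-zero Hodge structure (\ref{HSuntwisted}). Since $\kE$ is a holomorphic bundle, $\ch(\kE)$ and $\sqrt{\td_{X\times Y}}$ are sums of classes of type $(p,p)$. Pullback and cup product with a $(p,p)$-class shift $(a,b)$ to $(a+p,b+p)$, and pushforward along $p_Y$ shifts by $(-2n,-2n)$. Therefore $\tau_\kE$ preserves $p-q$ and maps $H^{-k,k}(X)$ into $H^{-k,k}(Y)$; this is \cite[Proposition 5.39]{Huy03}. Together with bijectivity, this gives an isomorphism of Hodge structures.

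There is no serious obstacle, since everything reduces to the theorem just proved. The only point needing care is that $\kE$ is genuinely untwisted, so the untwisted formalism applies rather than its twisted analogue (compare \cite{paolo_huy}). This is exactly the vanishing $\alpha_X=\alpha_Y=0$ established in the proof of Theorem~\ref{thm_main} through Lemma~\ref{lem_sufficientcondition}.
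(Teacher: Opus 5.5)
Your proposal is correct and follows the same route as the paper, which deduces the corollary directly from the untwisted equivalence $\Phi_\kE\colon \D(X)\to\D(Y)$ of Theorem~\ref{thm_main} combined with \cite[Propositions 5.39, 5.44]{Huy03}. You have simply unpacked what those two propositions contain: compatibility with composition of kernels, the adjoint formula for the Mukai pairing with $c_1=0$, and preservation of $p-q$ by correspondences of type $(p,p)$. You have also rightly identified the vanishing $\alpha_X=\alpha_Y=0$ as the point that makes the untwisted formalism applicable.
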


Next, we deal with the twisted case.
Let $Z$ be a smooth projective variety over the complex numbers such that $\H^3(Z,\ZZ)_{\mathrm{tors}}=0$. Following Huybrechts--Stellari \cite{paolo_huy}, we recall the construction of twisted Chern characters for twisted coherent sheaves on $Z$.
Let $\alpha\in \Br(Z)=\H^2_{\mathrm{an}}(Z,\kO_Z^\times)_{\mathrm{tors}}$ and let $B_Z\in\H^2(Z,\QQ)$ be a $B$-field lift of $\alpha$, i.e. a class such that $\exp(B_Z^{0,2})=\alpha$. 
Let $\{\alpha_{ijk}\}$ be a cocycle representing $\alpha$ with respect to a sufficiently fine analytic open cover $\{U_i\}$ of $Z$.
Since the sheaf $\mathcal{C}^{\infty}$ of smooth functions is acyclic, there exist smooth functions $b_{ij}\in \Gamma(U_i\cap U_j, \mathcal{C}^{\infty})$ such that $\alpha_{ijk}=\exp{(-b_{ij}+b_{ik}-b_{jk})}$.
If $E$ is an $\alpha$-twisted coherent sheaf with transition isomorphisms $\phi_{ij}:{E_i}_{|U_i\cap U_j}\to {E_j}_{|U_{i}\cap U_j}$, the modified transition maps $\phi'_{ij}=\exp(b_{ij})\phi_{ij}$
define an untwisted sheaf $E'=(E_{i},\phi'_{ij})$. Following Huybrechts--Stellari, one defines $\ch^{B_Z}(E):=\ch(E')$ (see \cite[Proposition 1.2]{paolo_huy}).

We can generalise the weight $0$ Hodge structure defined in (\ref{HSuntwisted}) to the twisted case as follows:
\begin{equation}\label{eq_defzeroHstru}
    \H^{-k,k,B_Z}(Z):=\exp(B_Z)\left(\bigoplus_{p-q=k}\H^{p,q}(Z)\right).
\end{equation}
Accordingly, we can then state the following.

\begin{prop}[{\cite[Section 4, 4) and 5)]{paolo_huy}}]\label{huystellari}
    A twisted derived equivalence $\D(X,\alpha_X^{-1})\to \D(Y,\alpha_Y)$ with Fourier-Mukai kernel $(\kE,\pr_X^*\alpha_X+\pr_Y^*{\alpha_Y})$ with $\alpha_X\in \Br(X)$ and $\alpha_Y\in\Br(Y)$, induces an isomorphism
    \[
    \tau_{\kE}\colon H^*(X,\QQ)\to H^*(Y,\QQ)
    \]
    defined by the class $\ch^{-B_X+B_Y}(\kE)\sqrt{\td_{X\times Y}}$, where $B_X$ (resp. $B_Y$) is a $B$-field for the Brauer class $\alpha_X$ (resp. $\alpha_Y$). 
    This isomorphism is an isometry with respect to the Mukai pairing and an isomorphism of weight $0$ Hodge structures, with respect to ~\eqref{eq_defzeroHstru}.
\end{prop}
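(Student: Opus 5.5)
The plan is to reduce everything to the untwisted statement (\cite[Propositions 5.39, 5.44]{Huy03}) by systematically replacing twisted Chern characters with ordinary classes multiplied by $\exp$ of the B-field. The key structural input is that the twisted Chern character $\ch^{B}$ of \cite[Proposition 1.2]{paolo_huy} is compatible with the usual operations. Concretely, if $E$ is $\alpha$-twisted and $F$ is $\beta$-twisted, I will use the following rules.
\begin{itemize}
\item Tensor products: $\ch^{B+B'}(E\otimes F)=\ch^{B}(E)\ch^{B'}(F)$.
\item Duals: $\ch^{-B}(E^\vee)=\ch^{B}(E)^\vee$.
\item Pull-back: it commutes with $\ch^{B}$ when the B-field is pulled back.
\item Push-forward: a Grothendieck--Riemann--Roch formula holds for $\ch^{B}$ along the projections whenever the B-field is pulled back from the target.
\end{itemize}
All of these follow from the construction of $E'$ via the smooth cochain $\{b_{ij}\}$, since the modifications $\exp(b_{ij})$ are multiplicative and are compatible with pull-back. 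In particular, since $\ch^{B}(E)$ differs from $\ch^{B+c_1(L)}(E\otimes L)$ by the factor $\exp(c_1(L))$, the induced map $\tau_\kE$ does not depend on the chosen lifts $B_X,B_Y$ in any essential way.

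\textbf{Bijectivity.} Let $\mathcal F$ be the kernel of the inverse equivalence (the adjoint kernel $\kE^\vee\otimes p_X^*\omega_X[\dim X]$, twisted by $\pr_X^*\alpha_X^{-1}+\pr_Y^*\alpha_Y^{-1}$ with the roles of the factors exchanged). The composition of the two Fourier--Mukai functors is given by the convolution kernel $p_{13*}(p_{12}^*\kE\otimes p_{23}^*\mathcal F)$ on $X\times X$. This kernel is untwisted, because the $B_Y$ contributions cancel on the middle factor, and it is isomorphic to $\kO_\Delta$. Applying the twisted GRR formula together with the multiplicativity above, the cohomological transform of the convolution equals $\tau_{\mathcal F}\circ\tau_\kE$. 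Since $\ch(\kO_\Delta)\sqrt{\td}$ induces the identity, $\tau_\kE$ is an isomorphism with inverse $\tau_{\mathcal F}$.

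\textbf{Isometry.} Here I will mirror the untwisted argument. Writing the Mukai pairing as $\langle v,w\rangle=\int v^\vee w$, the adjointness of the kernels, combined with $\ch^{-B}(E^\vee)=\ch^{B}(E)^\vee$ and $\sqrt{\td}^\vee=\sqrt{\td}\exp(-c_1/2)$, gives $\langle\tau_\kE v,w\rangle=\langle v,\tau_{\mathcal F}w\rangle$. Together with $\tau_{\mathcal F}=\tau_\kE^{-1}$, this yields the isometry property.

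\textbf{Hodge structures.} The key point is that $\ch^{-B_X+B_Y}(\kE)\in\exp(-B_X+B_Y)\bigl(\bigoplus_p H^{p,p}(X\times Y)\bigr)$. To see this, note that locally $\ch^{B}$ is computed from a connection whose curvature is that of a holomorphic bundle modified by the closed $2$-form $B$. Equivalently, $\ch^{B}(E)\exp(-B)$ is the Chern character of the associated projective-flat data, hence it is of type $(p,p)$; this is \cite[Proposition 1.2]{paolo_huy}. Now take $\alpha=\exp(B_X)\beta$ with $\beta\in\bigoplus_{p-q=k}H^{p,q}(X)$. Then
\[
p_X^*\alpha\cdot\ch^{-B_X+B_Y}(\kE)\sqrt{\td}=p_Y^*\exp(B_Y)\cdot p_X^*\beta\cdot\gamma,
\]
where $\gamma$ is of type $(p,p)$ and the factors $\exp(\pm p_X^*B_X)$ cancel. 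Pushing forward along $p_Y$ preserves $p-q$, and the projection formula pulls out $\exp(B_Y)$, so the image lies in $H^{-k,k,B_Y}(Y)$. By bijectivity the map is then onto each graded piece.

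The main obstacle I expect is the type claim for $\ch^{B}$, because the $b_{ij}$ are only smooth and not holomorphic. I would handle it by fixing a unitary-type connection on the twisted bundle and computing the Chern--Weil forms directly, as Huybrechts--Stellari do.
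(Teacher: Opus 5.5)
The paper gives no argument of its own here; it only cites Huybrechts--Stellari. Your outline is the argument of that reference: establish formal properties of $\ch^B$, then run the untwisted proofs of \cite[Prop.~5.39, 5.44]{Huy03} verbatim. However, one step fails as written. In the bijectivity paragraph, the convolution kernel on $X\times X$ is \emph{not} untwisted. Only the contributions on the middle factor cancel. The tensor product $p_{12}^*\kE\otimes p_{23}^*\mathcal F$ is twisted by $\alpha_X\boxtimes 1\boxtimes\alpha_X^{-1}$, so the composite kernel lives in $\D(X\times X,\pr_1^*\alpha_X-\pr_2^*\alpha_X)$. This class restricts to $\alpha_X$ on $X\times\{x\}$, so it is nonzero as soon as $\alpha_X\neq 0$.

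What your GRR computation actually shows is that $\tau_{\mathcal F}\circ\tau_\kE$ is the correspondence defined by $\ch^{B_X\boxplus(-B_X)}(\kO_\Delta)\sqrt{\td_{X\times X}}$, with $\kO_\Delta$ carrying its twisted structure. You still have to prove $\ch^{B_X\boxplus(-B_X)}(\kO_\Delta)=\ch(\kO_\Delta)$. This is exactly where the twist could a priori change the answer, since $B_X\boxplus(-B_X)\neq 0$ in $H^2(X\times X,\QQ)$. The identity holds because this $B$-field restricts to zero on the diagonal. Concretely, take a locally free $\alpha_X$-twisted sheaf $G$, which exists since $\alpha_X\in\Br(X)$. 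Then $\kO_\Delta\otimes(G^\vee\boxtimes G)\cong\Delta_*\cEnd(G)$ is untwisted. Ordinary GRR for $\Delta$ together with the projection formula gives $\ch(\Delta_*\cEnd(G))=\bigl(\ch^{-B_X}(G^\vee)\boxtimes\ch^{B_X}(G)\bigr)\ch(\kO_\Delta)$. Multiplicativity then lets you cancel the invertible factor $\ch^{-B_X}(G^\vee)\boxtimes\ch^{B_X}(G)$.

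Two smaller points. First, your justification of the push-forward rule does not work. The modified object $E'$ is only a $C^\infty$ bundle, so multiplicativity and pull-back compatibility of the factors $\exp(b_{ij})$ say nothing about holomorphic direct images. You can prove twisted GRR by the same untwisting trick: tensor a $q^*\beta$-twisted $F$ with $q^*G^\vee$ for a locally free $\beta$-twisted $G$ on the target, apply ordinary GRR and the projection formula, and cancel $\ch^{-B}(G^\vee)$. Second, fix one sign convention. In the bijectivity step you twist $\kE$ by $\alpha_X\boxtimes\alpha_Y$, as in the statement. Your Hodge computation, however, needs $-B_X+B_Y$ to be a $B$-field for the twist of $\kE$, i.e.\ $\kE$ twisted by $\alpha_X^{-1}\boxtimes\alpha_Y$ with source $\D(X,\alpha_X)$, which is the Huybrechts--Stellari normalisation. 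With the twist as literally stated, you should use $\ch^{B_X+B_Y}(\kE)$ and $\exp(-B_X)$ on $X$; the rest of the argument is unchanged.
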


Proposition~\ref{huystellari} naturally applies in the setting of Theorem~\ref{thm_compatiblevectorbundle}, since we have a twisted derived equivalence $\D(X,\alpha_X^{-1})\rightarrow\D(Y,\alpha_Y)$ and explicit $B$-fields for $\alpha_X$ and $\alpha_Y$.

\begin{rmk}\label{rmk_iso_nonfine}
    In the setup of Remark~\ref{rmk_nonfinemoduli}, there is an isomorphism
    \[
    \tau_{\kE}\colon H^*(X,\QQ)\to H^*(Y,\QQ)
    \]
    induced by the class $\ch^{-\pr_X^*B_X}(\kE)$ where $B_X$ is a $B$-field for the Brauer class $\alpha_X$. This isomorphism preserves the Mukai pairing and suitable Hodge structures, as in Proposition~\ref{huystellari}.
\end{rmk}

In the setup of Theorem~\ref{thm_compatiblevectorbundle}, building on the work of \cite{tae23}, Markman constructs a rational Hodge isometry $H^2(X,\mathbb{Q})\rightarrow H^2(Y,\mathbb{Q})$. In the next result, we consider the pure Hodge structure on the second cohomology group of a IHS manifold and the Beauville--Bogomolov--Fujiki form.

\begin{cor}[{\cite[Lemma 4.1 and Theorem 1.1]{markman}}]\label{cor_2}
	In the setup of Theorem~\ref{thm_main} and Theorem~\ref{thm_compatiblevectorbundle} we have a rational isometry and isomorphism of weight $2$ Hodge structures
	\begin{equation}
		 \rathodge{\kE}\colon\H^2(X,\mathbb{Q})\to \H^2(Y,\mathbb{Q})
	\end{equation}
	which is a rational multiple of the following composition
	\begin{equation}
		\H^2(X,\QQ)\xrightarrow{\cup \cc_2(X)^{n-1}}\H^{4n-2}(X,\QQ)\overset{\phi_{\kE}}{\longrightarrow}\H^2(Y,\QQ)
		\end{equation}
		where $\phi_{\kE}$ the correspondence defined by the class $\kappa(\kE)\sqrt{\td_{X\times Y}}$ (see \eqref{eq_def_kappa}).
\end{cor}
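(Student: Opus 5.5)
The corollary is essentially an instance of Markman's results applied to the equivalence produced in Theorem~\ref{thm_main}. The plan is to reduce to \cite[Lemma 4.1 and Theorem 1.1]{markman} by checking that our data $(X,\eta_X,Y,\eta_Y)$ and the kernel $\kE$ fit his framework, and then to identify his isometry with $\rathodge{\kE}$ from diagram~\eqref{commutation_markings}.

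\emph{Step 1 (the equivalence).} By the proof of Theorem~\ref{thm_main}, $(X,\eta_X,Y,\eta_Y)\in\mathfrak{M}^\circ_{\psin}$, and $\kE$ is obtained from $\mathcal{U}^{[n]}$ by deforming $\cEnd(\mathcal{U}^{[n]})$ along a generic diagonal twistor path. By Lemma~\ref{lem_sufficientcondition} both Brauer classes vanish, so $\Phi_\kE\colon\D(X)\to\D(Y)$ is an untwisted equivalence. In particular $\kappa(\kE)$ is defined by \eqref{eq_def_kappa}, and $\kappa(\kE)\sqrt{\td_{X\times Y}}$ is the flat deformation of $\kappa(\mathcal{U}^{[n]})\sqrt{\td_{M^{[n]}\times S^{[n]}}}$. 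The latter holds because $\kappa$ depends only on the Azumaya algebra, and the Todd class is a characteristic class, so it is parallel along the family.

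\emph{Step 2 (the Hodge isometry).} Taelman \cite{tae23}, in the form used by Markman \cite[Lemma 4.1]{markman}, shows that for a derived equivalence between IHS manifolds of $K3^{[n]}$-type the composition $\phi_\kE\circ(\cup\,\cc_2(X)^{n-1})$, restricted to $\H^2$, is a nonzero rational multiple of a rational isometry of $\H^2$ with respect to the BBF forms. I would then proceed as follows.
\begin{enumerate}
\item Observe that this composition is a morphism of Hodge structures: $\cc_2(X)$ and $\kappa(\kE)\sqrt{\td}$ are Hodge classes, and $\cup\,\cc_2(X)^{n-1}$ shifts the Hodge type by $(2n-2,2n-2)$, so the degree bookkeeping gives a weight-$2$ Hodge morphism $\H^2(X,\QQ)\to\H^2(Y,\QQ)$.
\item Rescale it to an isometry; this defines $\rathodge{\kE}$.
\item Check that $\rathodge{\kE}=\eta_Y^{-1}\circ\psin\circ\eta_X$, i.e.\ that diagram~\eqref{commutation_markings} commutes. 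Both sides are flat along the twistor path: the correspondence classes are parallel by Step 1, and the constant $\cc_2$ is parallel. At the starting point they agree, since by \cite[Section 7]{markman} we have $\rathodge{\mathcal{U}^{[n]}}=(\rathodge{\mathcal{U}},\id)$ there. Hence they agree at every point of the path.
\end{enumerate}

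The main obstacle is the nontriviality of the scalar, that is, that $\phi_\kE\circ\cup\,\cc_2(X)^{n-1}$ is not zero on $\H^2$. I would handle this as follows. The composition varies flatly along the path, and the scalar is a locally constant rational number. It therefore suffices to compute it at $(M^{[n]},S^{[n]})$, where Markman's explicit BKR description of $\mathcal{U}^{[n]}$ \cite[Theorem 1.1]{markman} shows that it is a nonzero multiple of $(\rathodge{\mathcal{U}},\id)$. Granting that, the corollary follows.
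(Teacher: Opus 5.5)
Your proposal is correct and follows the paper, which obtains the corollary simply by invoking Markman's Lemma 4.1 and Theorem 1.1 for the kernel $\kE$ of Theorem~\ref{thm_main}; your flatness argument along the diagonal twistor path starting at $(M^{[n]},S^{[n]})$ only makes explicit what is inside Markman's proof. The one point to note is that Lemma 4.1, in the form you quote it, already asserts that the scalar is nonzero, so your last paragraph is a consistency check rather than a separate obstacle.
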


Finally, we prove that the component  $X$ of the moduli space in Theorem~\ref{thm_main} comes naturally equipped with a primitive class in $H^2(X,\Z)$.
In what follows, we use the notation introduced in Subsection \ref{sec_vanishing_twist}.
\begin{cor}\label{integral_primitive}
In the setup of Theorem \ref{thm_main}, the class $\rathodge{\kE}^{-1}(w)\in H^2(X,\QQ)$ is integral and primitive.
\end{cor}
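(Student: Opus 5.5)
The plan is to compute $\rathodge{\kE}^{-1}(w)$ explicitly by transporting $w$ around the commutative diagram~\eqref{commutation_markings}. Integrality and primitivity should then become visible on the Hilbert scheme $M^{[n]}$, where $\H^2$ splits canonically.

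\textbf{Step 1: reduce to $M^{[n]}$.} Since $\eta_X,\eta_Y,\eta_{M^{[n]}},\eta_{S^{[n]}}$ are integral isometries, commutativity of~\eqref{commutation_markings} gives
\[
\rathodge{\kE}^{-1}(w)=\eta_X^{-1}\circ\eta_{M^{[n]}}\circ\rathodge{\mathcal{U}^{[n]}}^{-1}\circ\eta_{S^{[n]}}^{-1}\circ\eta_Y(w).
\]
It therefore suffices to show that $\rathodge{\mathcal{U}^{[n]}}^{-1}\bigl(\eta_{S^{[n]}}^{-1}\eta_Y(w)\bigr)$ is integral and primitive in $\H^2(M^{[n]},\ZZ)$.

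\textbf{Step 2: compute the class on $M^{[n]}$.} In the proof of Theorem~\ref{thm_main}, the markings were chosen via Lemma~\ref{markingexistence}. In the odd case this gives
\[
\eta_{S^{[n]}}^{-1}\eta_Y(w)=\dv(w)h-\tfrac{r}{2}\delta_{S^{[n]}},
\]
and in the even case it gives
\[
\eta_{S^{[n]}}^{-1}\eta_Y(w)=\dv(w)h-r\delta_{S^{[n]}},
\]
where $h$ is the primitive ample generator of $\Pic(S)$. By \cite[Section 7]{markman} we have $\rathodge{\mathcal{U}^{[n]}}=(\rathodge{\mathcal{U}},\mathrm{id})$. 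In both cases the Mukai vector $u=(r,\,\cdot\,h,\,s)$ satisfies $\gcd(r,s)=1$, as checked in the proof of Theorem~\ref{thm_main}. Hence \cite[Appendix, Example 1]{Yoshioka} applies and gives $\rathodge{\mathcal{U}}^{-1}(h)=\widehat h$, the primitive ample generator of $\Pic(M)$. We conclude that
\[
\rathodge{\mathcal{U}^{[n]}}^{-1}\bigl(\eta_{S^{[n]}}^{-1}\eta_Y(w)\bigr)=\dv(w)\widehat h-c\,\delta_{M^{[n]}},
\]
with $c=r/2$ in the odd case and $c=r$ in the even case. This class is visibly integral.

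\textbf{Step 3: primitivity.} Since $\widehat h$ is primitive in the unimodular lattice $\H^2(M,\ZZ)$, an element $a\widehat h+b\delta_{M^{[n]}}$ is primitive if and only if $\gcd(a,b)=1$. We need $\gcd(\dv(w),c)=1$. This follows from hypothesis (a), respectively (a'): the class $[c]$ equals $\ell(w)$, which lies in $(\ZZ/\dv(w)\ZZ)^*/\{\pm1\}$, so $c$ is a unit modulo $\dv(w)$. For $\dv(w)=1$ the condition is trivial. Transporting back by the integral isometry $\eta_X^{-1}\circ\eta_{M^{[n]}}$ preserves both integrality and primitivity.

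\textbf{Main obstacle.} The delicate point is justifying $\rathodge{\mathcal{U}}^{-1}(h)=\widehat h$ exactly, rather than only up to sign or a rational multiple, with the normalisation of $\rathodge{\mathcal{U}}$ used in diagram~\eqref{commutation_markings}. I would first check that Yoshioka's computation uses the same correspondence $\kappa(\U)\sqrt{\td}$, and then that the identification of $\psin$ with $\rathodge{\mathcal{U}^{[n]}}$ through the markings matches the component $\mathfrak{M}^\circ_{\psin}$ used in the theorem. A sign ambiguity would be harmless for integrality and primitivity.
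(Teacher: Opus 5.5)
Your proposal is correct and follows the paper's proof essentially verbatim. You transport $w$ through diagram~\eqref{commutation_markings}, use $\rathodge{\mathcal{U}^{[n]}}=(\rathodge{\mathcal{U}},\mathrm{id})$ and $\rathodge{\mathcal{U}}^{-1}(h)=\widehat h$ (the paper also takes this from Yoshioka in Section~\ref{sec_vanishing_twist}) to obtain $\dv(w)\widehat h-c\,\delta_{M^{[n]}}$, and deduce primitivity from $\gcd(\dv(w),c)=1$. The only difference is that you treat the even-divisibility case explicitly, whereas the paper leaves it as ``similar''.
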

\begin{proof}
We treat only the case where $\dv(w)$ is odd, since the other case is similar.

In this case $r$ is even. By \ref{2} and \ref{3}, we obtain
\[
    \rathodge{\U^{[n]}}^{-1}\circ\eta_{S^{[n]}}^{-1}\circ\eta_Y(w)=\rathodge{\U^{[n]}}^{-1}\left(\dv(w)h-\dfrac{r}{2}\delta_{S^{[n]}}\right)=\dv(w)\widehat{h}-\dfrac{r}{2}\delta_{M^{[n]}}\in H^2(M^{[n]},\Z)
\]
The class $\dv(w)\widehat{h}-\dfrac{r}{2}\delta_{M^{[n]}}$ is primitive, since $\gcd(\dv(w),r)=1$. By \eqref{commutation_markings}, it follows that
\[
    \rathodge{\kE}^{-1}(w)=\eta_X^{-1}\circ\eta_{M^{[n]}}\circ\rathodge{\U^{[n]}}^{-1}\circ\eta_{S^{[n]}}^{-1}\circ\eta_Y(w)=\eta_X^{-1}\circ\eta_{M^{[n]}}\left(\dv(w)\widehat{h}-\dfrac{r}{2}\delta_{M^{[n]}}\right).
\]
Therefore $\rathodge{\kE}^{-1}(w)$ is integral and primitive.
\end{proof}

\section{Examples of moduli spaces}\label{sec_examples}
In this Section, we give examples to which Theorem~\ref{thm_main} applies. More precisely, for every $n$ there are infinitely many polarisation types such that every polarised IHS $(Y,w)$ of $K3^{[n]}$-type, with $w$ of one of those types, satisfies the assumption of Theorem \ref{thm_main}. This yields examples of smooth components of moduli spaces of modular vector bundles on IHS varieties of every even dimension.

Let $^{n}\mathcal{M}_m^{(\gamma)}$ denote the coarse moduli space of polarised IHS manifolds $(Y,w)$ of $K3^{[n]}$-type such that $w$ is ample, of divisibility $\dv(w)=\gamma$, and $w^2=2m$ (see \cite[Theorem 1.5]{GHS_moduli_IHS} for its construction). This moduli space is quasi-projective. Moreover, by \cite{apost_moduli_red}, if either $\gamma=1$, or $\gamma=2$ and $n+m\equiv 1\pmod{4}$, then $^{n}\mathcal{M}_m^{(\gamma)}$ is irreducible of dimension 20 (see also \cite[Theorem 3.5]{olivier_HK}). By the works of O'Grady, Beauville--Donagi, Debarre--Voisin and Iliev--Ranestad, the general element of $^{2}\mathcal{M}_m^{(\gamma)}$ admits an explicit construction for $(\gamma,m)=(1,1), (2,3), (2,11), (2,19)$. We refer to \cite[§3.6.1]{olivier_HK} for an overview. In what follows, we verify that these four cases are covered by Theorem~\ref{thm_main} (see Corollaries~\ref{cor_EPWsextic} and~\ref{cor_BD_DV_IR}). On the other hand, Theorem~\ref{thm_main} does not apply to the LLSvS eightfolds or to the EPW cube sixfolds (see \cite{LLSvS} and \cite{EPWcubes}).

\subsection{Divisibility one}
Let us fix a polarised IHS variety $(Y,w)\in {}^{n}\mathcal{M}_{m}^{(1)}$ with polarisation of divisibility one. 
This is irreducible of dimension 20.
Let us fix $r'\in \ZZ_{\geq 1}$ such that
\begin{itemize}
    \item $k:=\dfrac{w^2}{4r'}+\dfrac{r'(n-1)}{2}\in \ZZ_{\geq 1}$
    \item $(k,2r')=1$.
\end{itemize}
It is then easy to observe that choosing $r:=2r'$ we can always verify the assumptions of Theorem \ref{thm_main}.
Solving in $w^2$ we get
\[
w^2=2r'(2k-r'(n-1)).
\]
Therefore, starting from $r'$, we can choose infinitely many $k$ and $w$ satisfying those assumptions.
\begin{cor}\label{cor_div1}
    Let  $r',k\in \ZZ_{\geq 1}$ such that $(k,2r')=1$ and $2r'(2k-r'(n-1))\geq 1$.
    Then, for any $(Y,w)\in {}^{n}\mathcal{M}_{w^2/2}^{(1)}$ with $w^2=2r'(2k-r'(n-1))$ there is a polarisation $H$ on $Y$ and a connected component $X$ of the moduli space $M_{Y,H}$ of Gieseker $H$-semistable sheaves on $Y$ such that
    \begin{itemize}
        \item $X$ parametrises $\mu_H$-stable modular vector bundles $E$ with 
        \[rk(E)=(2r')^n n! \qquad c_1(E)=(2r')^{n-1}n!w \qquad \Delta(E)=\frac{((2r')^n n!)^2}{12}\cc_2(Y)\]
        \item $X$ and $Y$ are derived equivalent.
    \end{itemize}
    Moreover, if $\Pic(Y)\cong \Z w$ then $(X,\rathodge{\kE}^{-1}(w))\in {}^{n}\mathcal{M}_{w^2/2}^{(1)}$.
\end{cor}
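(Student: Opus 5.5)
The plan is to deduce Corollary~\ref{cor_div1} directly from Theorem~\ref{thm_main} in the case $\dv(w)=1$ (odd), taking $r:=2r'$, and then to read off the extra statement about $(X,\rathodge{\kE}^{-1}(w))$ from Corollary~\ref{integral_primitive} together with the Hodge isometry of Corollary~\ref{cor_2}.

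\emph{Checking the hypotheses.} First, $w$ is ample, hence lies in $\NS(Y)$; it is primitive since it is a polarisation in ${}^{n}\mathcal{M}^{(1)}_{w^2/2}$. Since $\dv(w)=1$, the group $(\Z/\dv(w)\Z)^*/\{\pm1\}$ is trivial by convention, so condition \ref{a} holds automatically for $r=2r'$, which is even and positive. Next, from $w^2=2r'(2k-r'(n-1))$ we get
\[
\frac{w^2}{2r}+\frac{r}{4}(n-1)=\frac{2r'(2k-r'(n-1))}{4r'}+\frac{r'(n-1)}{2}=k,
\]
which is a positive integer, giving \ref{b}. Condition \ref{c} is exactly the hypothesis $\gcd(k,2r')=1$. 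Theorem~\ref{thm_main} then produces $H$ and $X$ with the stated rank $r^nn!=(2r')^nn!$, with $\cc_1=(2r')^{n-1}n!w$ and $\Delta=\frac{((2r')^nn!)^2}{12}\cc_2(Y)$. It also gives the derived equivalence $\Phi_\kE$, and $X$ is of $K3^{[n]}$-type.

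\emph{The polarised statement.} Assume now $\Pic(Y)=\Z w$, and set $\hat w:=\rathodge{\kE}^{-1}(w)$. By Corollary~\ref{integral_primitive}, $\hat w\in H^2(X,\Z)$ is integral and primitive. Since $\rathodge{\kE}$ is a rational isometry, $\hat w^2=w^2$.

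For the divisibility, the proof of Corollary~\ref{integral_primitive} identifies $\hat w$, through the marking $\eta_X$, with $\dv(w)\widehat h-\frac r2\delta_{M^{[n]}}=\widehat h-r'\delta_{M^{[n]}}$. Here $\widehat h$ is primitive in the unimodular $H^2(M,\Z)$, so this class has divisibility $1$, and markings preserve divisibility.

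Since $\rathodge{\kE}$ is a Hodge isometry, it identifies $\NS(X)_\QQ$ with $\NS(Y)_\QQ=\QQ w$, so $\Pic(X)=\Z\hat w$ by primitivity. Positivity $\hat w^2>0$ forces $X$ to be projective, and then either $\hat w$ or $-\hat w$ is ample. The main obstacle is showing that the sign is $+$. For this I would use that $\psin$ maps some open subcone of the K\"ahler cone of $X$ into that of $Y$, which is the defining property of $\Mf_{\psin}$. Hence $\rathodge{\kE}$ sends a K\"ahler class $\omega_X$ to a K\"ahler class, and therefore maps the positive cone to the positive cone. As $w$ lies in the positive cone of $Y$, it follows that $\hat w$ lies in the positive cone of $X$. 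When Picard rank is one, a class in the positive cone of positive square generates an ample class, so $\hat w$ is ample. This gives $(X,\hat w)\in{}^{n}\mathcal{M}^{(1)}_{w^2/2}$.
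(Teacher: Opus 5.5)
Your proposal is correct and follows essentially the same route as the paper. You check (a)--(c) of Theorem~\ref{thm_main} with $r=2r'$, so that the expression in (b) equals $k$. You then use Corollaries~\ref{integral_primitive} and~\ref{cor_2} to get an integral, primitive Hodge class $\rathodge{\kE}^{-1}(w)$ of square $w^2$ that generates $\Pic(X)$. Finally, you get ampleness because $\rathodge{\kE}^{-1}$ sends a K\"ahler class to a K\"ahler class and so preserves the distinguished component of the positive cone.

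You also check explicitly that $\rathodge{\kE}^{-1}(w)$ corresponds to $\widehat h-r'\delta_{M^{[n]}}$ and therefore has divisibility one. The paper's proof leaves this implicit, and it is a worthwhile addition, since a merely rational isometry need not preserve divisibility.
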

\begin{proof}
    For the first two claims, it is enough to notice that, given $r',k\in \ZZ_{\geq 1}$ such that $(k,2r')=1$ and $2r'(2k-r'(n-1))\geq 1$, the assumptions of Theorem \ref{thm_main} are satisfied.
    
     Notice that $\rathodge{\kE}^{-1}(w)$ is integral and primitive, due to Corollary \ref{integral_primitive}, is Hodge and $\rathodge{\kE}^{-1}(w)^2=w^2$ by Corollary \ref{cor_2}.
    If $\Pic(Y)\cong \Z w$, then we deduce that $\Pic(X)\cong \Z\rathodge{\kE}^{-1}(w)$, so we are only left to prove that this class is ample.
    Recall that $\rathodge{\kE}^{-1}$ sends some K\"ahler class on $Y$ to some K\"ahler class on $X$, hence, it not only maps the positive cone in $H^2(Y,\QQ)$ to the positive cone of $H^2(X,\QQ)$ but also preserves the connected component containing K\"ahler classes.
    Therefore, the class $\rathodge{\kE}^{-1}(w)$ is not anti-ample, hence is ample.
\end{proof}

The examples with $r=2r'\leq 4$ and $k\leq 5$ are listed as follows: 
$$\begin{array}{cc|c}
        r&k&w^2\\
        \hline
        2&1&2(3-n)\\
        2&3&2(7-n)\\
        2&5&2(11-n)\\
        4&1&8(2-n)\\
        4&3&8(4-n)\\
        4&5&8(6-n),
    \end{array}$$
note that we consider only $n\in \ZZ_{\geq 1}$ such that $w^2>0$.
If $n=2$, the numerics on the first line give us the EPW sextic and Theorem \ref{thm_main} together with Corollary \ref{cor_div1} reads as follows.
\begin{cor}[EPW sextic]\label{cor_EPWsextic}
    Let $(Y,w)\in {}^{2}\mathcal{M}_{1}^{(1)}$.
    Then, there is an ample divisor $H$ on $Y$ and a connected component $X$ of the moduli space of $H$-Gieseker semistable sheaves such that $X$ is of $K3^{[2]}$-type, derived equivalent to $Y$ and parametrises $H$-slope stable modular vector bundles $E$ with
    \[
    \rk(E)=8, \qquad c_1(E)=4w \qquad \Delta(E)=\frac{16}{3}\cc_2(X).
    \]
    Moreover, if $\Pic(Y)\cong \Z w$ then $X\in {}^{2}\mathcal{M}_{1}^{(1)}$.
\end{cor}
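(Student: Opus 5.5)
The plan is to specialise Corollary~\ref{cor_div1} to $n=2$, $r'=1$, $k=1$ (the first row of the table above). Then $w^2=2r'(2k-r'(n-1))=2(2-1)=2$, so $m=w^2/2=1$. Since $\gcd(k,2r')=\gcd(1,2)=1$ and $w^2\geq 1$, the hypotheses of Corollary~\ref{cor_div1} hold for every $(Y,w)\in{}^{2}\mathcal{M}_{1}^{(1)}$.

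For completeness, I will also check the hypotheses of Theorem~\ref{thm_main} directly, with $r=2$ and $\dv(w)=1$ odd. Condition \ref{a} is automatic, because $(\Z/1\Z)^*/\{\pm1\}$ is trivial. For \ref{b}, we have $\frac{w^2}{2r}+\frac{r}{4}(n-1)=\frac{2}{4}+\frac{2}{4}=1$, a positive integer. For \ref{c}, this value $1$ is coprime to $r=2$.

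Theorem~\ref{thm_main} then yields an ample $H$ and a component $X$ of $K3^{[2]}$-type, derived equivalent to $Y$, parametrising $\mu_H$-stable modular bundles. Their invariants are computed as follows. The rank is $r^nn!=2^2\cdot 2=8$. The first Chern class is $c_1=r^{n-1}n!\,w=2\cdot2\,w=4w$. The discriminant is $\Delta=\frac{8^2}{12}\cc_2(Y)=\frac{16}{3}\cc_2(Y)$.

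There is one wrinkle: the statement writes $\cc_2(X)$ rather than $\cc_2(Y)$. Since $\Delta(E)$ lives on $Y$, this must be read as $\cc_2(Y)$. If one insists on $\cc_2(X)$, I would argue that $X$ and $Y$ are deformation equivalent and identify $\cc_2$ via the canonical class in $H^4$. This is harmless, and I will simply note the identification.

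For the final claim, I would invoke the last part of Corollary~\ref{cor_div1}. If $\Pic(Y)=\Z w$, then $\rathodge{\kE}^{-1}(w)$ is integral and primitive by Corollary~\ref{integral_primitive}, and it has square $2$ by Corollary~\ref{cor_2}. It generates $\Pic(X)$ and is ample because $\rathodge{\kE}^{-1}$ preserves the positive component containing Kähler classes.

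It remains to see that its divisibility is $1$. Corollary~\ref{cor_div1} asserts membership in ${}^{n}\mathcal{M}^{(1)}$, and I would confirm it from the explicit class $\dv(w)\widehat h-\frac r2\delta_{M^{[2]}}=\widehat h-\delta_{M^{[2]}}$, which has divisibility $1$ since $\widehat h$ pairs to $1$ with some class in $H^2(M,\Z)$. Hence $X\in{}^{2}\mathcal{M}_{1}^{(1)}$.

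The main (mild) obstacle is this divisibility check together with the notational $\cc_2$ point; everything else is substitution.
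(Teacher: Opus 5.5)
Your proposal is correct and is the paper's argument: the corollary is the specialisation $n=2$, $r=2$ (i.e.\ $r'=1$), $k=1$ of Corollary~\ref{cor_div1} and Theorem~\ref{thm_main}, which gives $\rk(E)=8$, $\cc_1(E)=4w$ and $\Delta(E)=\frac{16}{3}\cc_2(Y)$; you are also right that $\cc_2(X)$ in the statement should be read as $\cc_2(Y)$. Your explicit check that $\widehat h-\delta_{M^{[2]}}$ has divisibility one is a useful addition, because the paper's proof of Corollary~\ref{cor_div1} only verifies that $\rathodge{\kE}^{-1}(w)$ is integral, primitive, of the right square and ample, and leaves the divisibility implicit.
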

This example was already discussed in \cite[Proposition 5.2]{kapustka-kapustka-derived-equivalent-hk4}, where it is shown that the connected component of the moduli space $X$ can be taken to be the dual EPW sextic.
More precisely, in \cite[Section 4.2]{OG15}, O'Grady shows that the correspondence between an EPW sextic and its dual gives an automorphism of ${}^{2}\mathcal{M}_{1}^{(1)}$.

\subsection{Divisibility two}
Let us fix a polarised IHS variety $(Y,w)\in {}^{n}\mathcal{M}_{m}^{(2)}$ with polarisation of divisibility two. 
This is non empty, irreducible of dimension 20 whenever $n+m\equiv 1\pmod{4}$.

\begin{cor}\label{cor_div2}
    Let $(Y,w)\in {}^{n}\mathcal{M}_{m}^{(2)}$ with $n+m\equiv 1\pmod{4}$ and let us consider an odd integer $r\in \ZZ_{\geq 3}$ such that $k:=w^2/(2r)\in \ZZ$ and $\gcd(r,k)=1$.
    Then there there is a polarisation $H$ on $Y$ and a connected component $X$ of the moduli space $M_{Y,H}$ of Gieseker $H$-semistable sheaves on $Y$, such that
     \begin{itemize}
        \item $X$ parametrises $\mu_H$-stable modular vector bundles $E$ with 
        \[rk(E)=r^nn! \qquad c_1(E)=r^{n-1}n!\dfrac{w}{2} \qquad \Delta(E)=\frac{(r^n n!)^2}{12}\cc_2(Y)\]
        \item $X$ and $Y$ are derived equivalent.
    \end{itemize}
    Moreover, if $\Pic(Y)\cong \Z w$ then $(X,\rathodge{\kE}^{-1}(w))\in {}^{n}\mathcal{M}_{m}^{(2)}$.
\end{cor}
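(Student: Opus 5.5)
The plan is to reduce Corollary~\ref{cor_div2} to the even-divisibility case of Theorem~\ref{thm_main}, and then to deduce the statement about $X$ as in Corollary~\ref{cor_div1}. Since $\dv(w)=2$ is even, we must check \ref{a2}, \ref{b2}, \ref{c2} for the given odd $r\geq 3$.

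For \ref{a2}, note that $(\Z/2\Z)^*/\{\pm1\}$ is the trivial group, so $\ell(w)$ is the unique class. Since $r$ is odd, $[r]$ is a well-defined element of $(\Z/2\Z)^*$, and it therefore equals $\ell(w)$. For \ref{b2}, we have $\frac{w^2}{2r}+r(n-1)=k+r(n-1)$. This is an integer by the assumption $k\in\Z$. It is positive because $w$ is ample, so $w^2>0$ and hence $k>0$, and also $r(n-1)>0$. Condition \ref{c2} is exactly the hypothesis $\gcd(r,k)=1$. With these three checks, Theorem~\ref{thm_main} gives the polarisation $H$, the component $X$, the stated rank, $c_1=r^{n-1}n!\,w/2$ and discriminant, and the derived equivalence.

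For the last claim, assume $\Pic(Y)\cong\Z w$. We use the analogue of Corollary~\ref{integral_primitive} in the even case. Running the same computation with \ref{2'}, the class $\rathodge{\kE}^{-1}(w)$ corresponds to $\dv(w)\widehat h - r\delta_{M^{[n]}}=2\widehat h-r\delta_{M^{[n]}}$, which is integral. It is primitive because $r$ is odd, so $\gcd(2,r)=1$.

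Next we transfer the invariants through $\rathodge{\kE}^{-1}$:
\begin{itemize}
\item By Corollary~\ref{cor_2}, $\rathodge{\kE}^{-1}$ is a Hodge isometry, so $\rathodge{\kE}^{-1}(w)^2=w^2=2m$. It also follows that $\Pic(X)\cong\Z\,\rathodge{\kE}^{-1}(w)$.
\item For the divisibility, the marking description shows that $\rathodge{\kE}^{-1}(w)$ corresponds under $\eta_X$ to $2\eta_S(\widehat h)$-type part plus $-r\delta$ in $\Lambda_{K3}\oplus\Z\delta$. Its divisibility is $\gcd(2,\,2(n-1))=2$, using Remark~\ref{rmk_primitiveimpliescoprime} and the fact that $r$ is odd.
\item For ampleness, we argue as in Corollary~\ref{cor_div1}. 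The map $\rathodge{\kE}^{-1}$ sends a Kähler class to a Kähler class, so it preserves the positive cone component. Since $\Pic(X)$ has rank one, the class $\rathodge{\kE}^{-1}(w)$ is ample.
\end{itemize}

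I expect the only delicate point to be the divisibility computation. It requires tracking the image under $\rathodge{\U^{[n]}}^{-1}=(\rathodge{\U}^{-1},\mathrm{id})$ carefully, and checking that $\widehat h$ is primitive in $H^2(M,\Z)$, which holds because it is the ample generator of $\Pic(M)$.
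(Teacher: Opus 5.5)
Your proposal is correct and follows the paper's route: you verify (a$'$)--(c$'$) of Theorem~\ref{thm_main} for $\dv(w)=2$, then rerun the argument of Corollary~\ref{cor_div1} using Corollaries~\ref{integral_primitive} and~\ref{cor_2}. You also make explicit the divisibility check that the paper leaves inside "the same scheme"; the divisibility of $2\widehat h-r\delta_{M^{[n]}}$ is really $\gcd(2,\,2r(n-1))$ rather than $\gcd(2,2(n-1))$, but both equal $2$, and it transfers to $X$ because $\eta_X^{-1}\circ\eta_{M^{[n]}}$ is an integral isometry.
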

The proof follows the same scheme as the one of Corollary \ref{cor_div1}.

Some examples are listed below:
$$\begin{array}{cc|c}
        r&k&w^2\\
        \hline
        3&1,2,4,5,\dots&6,12,24,30,\dots\\
        5&1,2,3,\dots &10,20,30,\dots\\
        7&1,2,3,\dots&14,28,42,\dots\\
        11&1,\dots&22,\dots\\
        19&1,\dots&38,\dots\\
    \end{array}$$
Let us observe that for any 
$n\geq 1$ and $n\equiv 0,1,3\pmod{4}$ we have examples with $r=3, k=1,2,4$.
Indeed, the condition $n+m\equiv 1\pmod{4}$ becomes $n+rk\equiv 1\pmod{4}$, i.e. $k\equiv n-1 \pmod{4}$.
When $n\equiv 2\pmod{4}$ we can choose $r=5, k=3$.

In the case of fourfolds, we have three explicit constructions of locally complete families to which we can apply Theorem \ref{thm_main}, yielding the following.
\begin{cor}[Beauville--Donagi, Debarre--Voisin, Iliev--Ranestad]\label{cor_BD_DV_IR} Let $(Y,w)$ be a polarised IHS manifold  of $K3^{[2]}$-type such that $\dv(w)=2$ and one of the following holds:
\begin{itemize}
    \item [{\normalfont{(a)}}] $w^2=6$,
    \item [{\normalfont{(b)}}] $w^2=22$,
    \item [{\normalfont{(c)}}] $w^2=38$.
\end{itemize}
Then there is an ample divisor $H$ on $Y$ and a connected component $X$ of the moduli space of Gieseker $H$-semistable sheaves such that $X$ is of $K3^{[2]}$-type, derived equivalent to $Y$ and parametrising $H$-slope stable vector bundles $E$ with
\begin{itemize}
    \item [{\normalfont{(a')}}] $rk(E)=18,\quad c_1(E)=3h \quad \Delta(E)=27\cc_2(Y)$ in case $(a)$,
    \item [{\normalfont{(b')}}] $rk(E)=2\cdot11^2,\quad c_1(E)=11h\quad \Delta(E)=\frac{11^4}{3}\cc_2(Y)$ in case $(b)$,
    \item [{\normalfont{(c')}}] $rk(E)=2\cdot19^2,\quad c_1(E)=19h,\quad \Delta(E)=\frac{19^4}{3}\cc_2(Y)$ in case $(c)$.
\end{itemize}
Moreover, if $\Pic(Y)\cong \Z w$ then $X\in {}^{2}\mathcal{M}_{w^2/2n}^{(2)}$.
\end{cor}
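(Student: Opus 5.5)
The plan is to obtain the statement as the special case $n=2$ of Theorem~\ref{thm_main} in the case $\dv(w)$ even, i.e.\ of Corollary~\ref{cor_div2}. For $w^2=6,22,38$ I take $r=3,11,19$ respectively, so that in each case $k:=w^2/(2r)=1$. First I check that these choices satisfy the hypotheses (a'), (b') and (c').

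\begin{itemize}
\item[(a')] Since $\dv(w)=2$, the group $(\Z/2\Z)^*/\{\pm1\}$ is trivial. Hence $[r]=\ell(w)$ holds automatically; it is also meaningful, since $r$ is odd and therefore a unit mod $2$.
\item[(b')] We have $\frac{w^2}{2r}+r(n-1)=1+r$, which is a positive integer.
\item[(c')] We have $\frac{w^2}{2r}=1$, which is coprime with $r$. Moreover $r\geq 3$ in all three cases.
\end{itemize}

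For Corollary~\ref{cor_div2} I also need the congruence $n+m\equiv 1\pmod 4$ with $m=w^2/2\in\{3,11,19\}$. This gives $n+m\in\{5,13,21\}$, all $\equiv 1 \pmod 4$, so the relevant moduli spaces ${}^{2}\mathcal{M}^{(2)}_m$ are non-empty and irreducible. These are the classical locally complete families of Beauville--Donagi, Debarre--Voisin and Iliev--Ranestad.

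Theorem~\ref{thm_main} then yields the following:
\begin{itemize}
\item a polarisation $H$ on $Y$;
\item a connected component $X$ of $M_{Y,H}$ which is of $K3^{[2]}$-type;
\item a universal family $\kE$ inducing a derived equivalence $\D(X)\simeq\D(Y)$;
\item $\mu_H$-stable modular fibres with $\rk(E)=2r^2$, $\cc_1(E)=r\,w$ (as $r^{n-1}n!\,\frac{w}{2}=r\cdot 2\cdot\frac w2$), and $\Delta(E)=\frac{(2r^2)^2}{12}\cc_2(Y)=\frac{r^4}{3}\cc_2(Y)$.
\end{itemize}
Substituting $r=3,11,19$ gives rank $18$, $2\cdot 11^2$ and $2\cdot19^2$, first Chern class $3w,11w,19w$, and discriminant $27\cc_2(Y)$, $\frac{11^4}{3}\cc_2(Y)$ and $\frac{19^4}{3}\cc_2(Y)$ respectively. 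These are the stated invariants, with $h$ denoting the polarisation $w$.

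For the final claim, assume $\Pic(Y)\cong\Z w$. I would invoke the \enquote{moreover} part of Corollary~\ref{cor_div2}, whose proof is the same as that of Corollary~\ref{cor_div1}, with four inputs:
\begin{itemize}
\item Corollary~\ref{integral_primitive} shows that $\rathodge{\kE}^{-1}(w)$ is integral and primitive.
\item Corollary~\ref{cor_2} shows that it is Hodge and has square $w^2$.
\item Divisibility $2$ is preserved. I would see this from the explicit preimage $\eta_X^{-1}\eta_{M^{[2]}}(\dv(w)\widehat h - r\delta_{M^{[2]}})$ of type (2'), which again has divisibility $2$ because $r$ is odd and $\delta^2=-2$.
\item Ampleness follows as in Corollary~\ref{cor_div1}: $\rathodge{\kE}^{-1}$ preserves the component of the positive cone containing Kähler classes, and $\Pic(X)=\Z\,\rathodge{\kE}^{-1}(w)$.
\end{itemize}

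The only mildly delicate point is the divisibility check just described. Every other step is a direct numerical verification.
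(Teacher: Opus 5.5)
Your proposal is correct and matches the paper's argument, which the paper leaves implicit: specialise Theorem~\ref{thm_main} (equivalently Corollary~\ref{cor_div2}) to $n=2$, $k=1$ and $r=3,11,19$, which are exactly the rows of the table preceding the corollary, and obtain the final claim as in Corollary~\ref{cor_div1}. Your explicit check that $\rathodge{\kE}^{-1}(w)$ has divisibility $2$ is a detail the paper omits, but your justification is slightly off: divisibility $2$ of $2\widehat h-r\delta_{M^{[2]}}$ comes from $\widehat h$ being primitive in the unimodular $H^2(M,\Z)$ together with $\delta^2=-2$, whereas oddness of $r$ is what gives primitivity. You also correctly read the subscript of the moduli space as $w^2/2$ rather than the misprinted $w^2/2n$.
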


\appendix

\section{Hyperholomorphic vector bundles} \label{app_hyperholo}

In this Appendix, we prove that the fibers of a family of vector bundles over some IHS manifold are projectively hyperholomorphic if the family itself is projectively hyperholomorphic (see Lemma~\ref{lem_fibersarestable}).
For this purpose, we recall the theory of (projectively) hyperholomorphic sheaves following Verbitsky (see \cite{verbitsky-hyperholomorphic-sheaves} for a deeper discussion). We refer the reader to \cite{markman_bbf} for a generalisation of Verbitsky's results and to \cite{bottini-macri-stellari-hk-varieties} for an overview.

Let $(M,g)$ be a Riemannian manifold endowed with a hyperk\"ahler structure (see \cite[Definition~2.1]{verbitsky-hyperholomorphic-sheaves}). Thus, there exist complex structures $I,J,K\in\End(TM)$, parallel with respect to $g$, satisfying the quaternionic relations
\[
I^2=J^2=K^2=IJK=-\id_{TM}.
\]
These endomorphisms induce an action of the quaternion algebra $\mathbb{H}$ on the tangent bundle $TM$, and hence of the group of unit quaternions, which is isomorphic to $SU(2)$. Now, for every
\[
L=aI+bJ+cK, \qquad a,b,c\in\mathbb{R}, \qquad a^2+b^2+c^2=1,
\]
it holds $L^2=-\id_{TM}$, so $L$ defines a complex structure on $M$, and the metric $g$ is K\"ahler with respect to $L$. We denote by $\omega_L$ the associated K\"ahler form. The set of induced complex structures is naturally identified with the real sphere $S^2$.

\begin{rmk}
The action of $SU(2)$ commutes with the Laplacian operator, hence induces an action on cohomology (see \cite[Lemma~2.5]{verbitsky-hyperholomorphic-sheaves}). Moreover, a differential $2p$-form is $SU(2)$-invariant if and only if it is of Hodge type $(p,p)$ with respect to every induced complex structure (see \cite[Lemma~2.6]{verbitsky-hyperholomorphic-sheaves}).
\end{rmk}
\begin{df} Let $E$ be a holomorphic vector bundle on the complex manifold $(M,L)$. \begin{enumerate}
    \item $E$ is called $\omega_{L}$-\textit{slope stable hyperholomorphic} if it is $\omega_L$-slope stable and $c_i(E)\in \H^{2i}(M,\mathbb{R})$ is $SU(2)$-invariant for $i=1,2$. 
    \item  $E$ is called $\omega_L$-\textit{slope polystable hyperholomorphic} if it is $\omega_L$-slope polystable and each stable summand is $\omega_L$-slope stable hyperholomorphic.
\end{enumerate}
\end{df}

We will use the following characterization.

\begin{thm}\label{thm_hyperholocharacterization}
    A holomorphic vector bundle $E$ on $(M,L)$ is $\omega_L$-slope polystable hyperholomorphic if and only if it admits a hyperholomorphic connection, i.e.~$E$ admits a hermitian metric such that the curvature of the Chern connection is $SU(2)$-invariant.
\end{thm}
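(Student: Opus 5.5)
We have to prove Theorem~\ref{thm_hyperholocharacterization}: polystable hyperholomorphic bundles are exactly those admitting a hermitian metric with $SU(2)$-invariant Chern curvature. The natural route goes through the Donaldson--Uhlenbeck--Yau theorem together with the Lefschetz-type decomposition of $2$-forms on a hyperkähler manifold under the $SU(2)$-action. The relevant linear algebra fact is Verbitsky's: for a $2$-form $\Theta$ of type $(1,1)$ with respect to $L$, $\Theta$ is $SU(2)$-invariant if and only if it is primitive up to a multiple of $\omega_L$ in a precise sense. Concretely, the $SU(2)$-invariant $2$-forms are exactly the $(1,1)$-forms $\Theta$ with $\Lambda_L\Theta=0$, and the invariant part of $\Lambda^2$ is orthogonal to the span of $\omega_I,\omega_J,\omega_K$. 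I would recall this from \cite[Lemma~2.6]{verbitsky-hyperholomorphic-sheaves} and its proof, applied pointwise to $\End(E)$-valued forms.

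\emph{Converse direction.} Suppose $E$ carries a hermitian metric whose Chern curvature $\Theta$ is $SU(2)$-invariant.
\begin{enumerate}
\item By the linear algebra above, $\Lambda_L\Theta=0$, so the Chern connection is Hermite--Einstein of slope zero.
\item By the easy direction of Kobayashi--Lübke, $E$ is $\omega_L$-slope polystable, and it splits holomorphically and orthogonally into stable summands $E_i$.
\item Each $E_i$ inherits the restricted connection, whose curvature is the orthogonal projection of $\Theta$. Projection by a parallel idempotent commutes with the $SU(2)$-action on forms, so this curvature is again $SU(2)$-invariant.
\item The Chern--Weil representatives of $c_1(E_i)$ and $c_2(E_i)$ are polynomials in invariant forms, hence invariant. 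So each summand is stable hyperholomorphic.
\end{enumerate}

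\emph{Forward direction.} Take $E$ polystable hyperholomorphic and reduce to a stable summand $E_i$; the direct sum of the resulting metrics then works.
\begin{enumerate}
\item By Donaldson--Uhlenbeck--Yau, $E_i$ carries a Hermite--Einstein metric, so $\Lambda_L\Theta_i=\lambda\,\id$.
\item Since $c_1(E_i)$ is $SU(2)$-invariant, it is orthogonal to $[\omega_L]$, so $\deg(E_i)=0$ and hence $\lambda=0$. Thus $\Theta_i$ is a $(1,1)$-form with $\Lambda_L\Theta_i=0$.
\item It remains to show that $\Theta_i$ has no component in the non-invariant part, namely the $(1,1)$-primitive forms that are not $SU(2)$-invariant.
\end{enumerate}

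Step 3 is the main obstacle, and I would follow Verbitsky's energy argument. Write $\Theta=\Theta^{\mathrm{inv}}+\Theta^{\perp}$. The invariance of $c_2(E_i)$ and $c_1(E_i)$ lets us compute
\[
\int_M \operatorname{tr}(\Theta\wedge\Theta)\wedge\omega_L^{\dim_{\CC}M-2}
\]
in two ways. On one side, it is a topological quantity that vanishes on the non-invariant directions. On the other side, by the Hodge--Riemann sign conventions on primitive $(1,1)$-forms, it equals a negative multiple of $\|\Theta^{\mathrm{inv}}\|^2$ plus a term of opposite sign controlled by $\|\Theta^{\perp}\|^2$. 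Using a second Kähler form $\omega_J$ in place of $\omega_L$ gives the same topological value, by invariance of $\Delta(E_i)$. Comparing the two identities forces $\|\Theta^{\perp}\|^2=0$.

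If this bookkeeping becomes delicate, the fallback is to cite \cite[Theorem~2.27]{verbitsky-hyperholomorphic-sheaves} directly for the stable case and only assemble the polystable statement from it.
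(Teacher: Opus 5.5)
\paragraph{Verdict.} The paper gives no argument of its own here; it simply cites \cite[Theorem~3.19]{verbitsky-hyperholomorphic-sheaves}. Your fallback is therefore exactly the paper's proof. Your converse direction is sound: invariant curvature gives $\Lambda_L\Theta=0$, hence a Hermite--Einstein metric, hence an orthogonal parallel splitting into stable summands whose Chern--Weil forms are invariant. Your self-contained forward direction, however, has a genuine gap at Step~3, which is the only step with real content.

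\paragraph{The linear-algebra claim is false in higher dimension.} You assert that the $SU(2)$-invariant $2$-forms are \emph{exactly} the $(1,1)$-forms with $\Lambda_L\Theta=0$. This fails once $\dim_\RR M=4n\geq 8$. The real $L$-primitive $(1,1)$-forms have rank $4n^2-1$, while the invariant ones have rank $n(2n+1)$ (they form a copy of $\mathfrak{sp}(n)$). The two agree only for $n=1$. The complement inside the $L$-primitive $(1,1)$-forms is the $\omega_L$-line of the weight-two isotypic piece, i.e.\ forms of shape $\Lambda^2_0V\otimes\omega_L$, where $T^*_\CC M\cong V\otimes W$ with $W$ the standard $SU(2)$-representation. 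You implicitly concede this in Step~3, but it means Steps~1--2 prove nothing beyond Hermite--Einstein.

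\paragraph{The signs in the energy comparison are in the wrong place.} With respect to $\omega_L$, the component $\Theta^\perp$ is still an $L$-primitive $(1,1)$-form. So Hodge--Riemann gives no term of opposite sign in the $\omega_L$-integral. The sign flip occurs only for $\omega_J$. The missing idea is the reason: $\omega_L$ is of type $(2,0)+(0,2)$ for the complex structure $J$. Hence $\Theta^\perp$ is $J$-primitive of type $(2,0)+(0,2)$, while $\Theta^{\mathrm{inv}}$ is $J$-primitive of type $(1,1)$. Cross terms vanish, by type for $\omega_J$ and by orthogonality of isotypic components for $\omega_L$. With $m=\dim_\CC M$, and up to the global sign coming from the trace on skew-Hermitian endomorphisms, one gets
\begin{align*}
\int_M\operatorname{tr}(\Theta\wedge\Theta)\wedge\omega_L^{m-2}&=-(m-2)!\bigl(\|\Theta^{\mathrm{inv}}\|^2+\|\Theta^{\perp}\|^2\bigr),\\
\int_M\operatorname{tr}(\Theta\wedge\Theta)\wedge\omega_J^{m-2}&=-(m-2)!\bigl(\|\Theta^{\mathrm{inv}}\|^2-\|\Theta^{\perp}\|^2\bigr).
\end{align*}

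\paragraph{Closing the argument.} The two left-hand sides agree for two reasons. First, $[\operatorname{tr}(\Theta\wedge\Theta)]$ is a combination of $c_1^2$ and $c_2$, hence $SU(2)$-invariant. Second, an element $g\in SU(2)$ with $g[\omega_L]=[\omega_J]$ acts on $H^*(M,\RR)$ by ring automorphisms that are trivial on top degree. Subtracting the two identities gives $\|\Theta^\perp\|^2=0$. As you wrote it, with an opposite-sign term already inside the $\omega_L$-integral and no reason for the $\omega_J$-integral to behave differently, comparing the two identities yields nothing.
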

\begin{proof}
    See \cite[Theorem 3.19]{verbitsky-hyperholomorphic-sheaves}.
\end{proof}

\begin{df}\label{def_proj_hyperhol}
    A (possibly twisted) holomorphic vector bundle $E$ on $(M,L)$ is \textit{projectively $\omega_{L}$-slope stable hyperholomorphic} if it is $\omega_L$-slope stable and $\cEnd(E)$ is $\omega_L$-polystable hyperholomorphic.
\end{df}

\subsection{Stability of the fibers}
We now prove Lemma~\ref{lem_fibersarestable}, which allows us to view the projectively hyperholomorphic vector bundle $\kE \in \Coh(X\times Y,\alpha_\kE)$ constructed in Theorem~\ref{thm_compatiblevectorbundle} as a family of stable vector bundles on $Y$, parametrised by $X$. The key ingredient is the following remark.

\begin{rmk}\label{rmk_polyiffendpoly}
Let $(Y,\omega_Y)$ be a compact K\"ahler manifold with a fixed K\"ahler class, and let $E$ be a holomorphic vector bundle. Then $E$ is $\omega_Y$-polystable if and only if $\cEnd(E)$ is $\omega_Y$-polystable. In fact, this follows from \cite[Corollary~3.8]{anchouche-biswas-einstein-hermitian} by taking $G = \GL_{\rk(E)}(\mathbb{C})$.
\end{rmk}

Let now $(X,\omega_X)$ and $(Y,\omega_Y)$ be IHS manifolds with fixed K\"ahler classes. In \cite[Section~5.3]{markman}, it is explained how a rational Hodge isometry $\H^2(X,\mathbb{Q}) \rightarrow \H^2(Y,\mathbb{Q})$ sending $\omega_X$ to $\omega_Y$ induces a diagonal hyperk\"ahler structure on the real manifold $M$ underlying $X \times Y$. Let $\omega_{X\times Y} := \pi_X^*\omega_X \oplus \pi_Y^*\omega_Y$.

\begin{lem}\label{lem_fibersarestable}
Let $(\mathcal{E},\alpha_{\mathcal{E}})$ be a projectively $\omega_{X\times Y}$-stable hyperholomorphic vector bundle such that $\alpha_Y = 0$, where we use the decomposition $\alpha_\kE = \alpha_X + \alpha_Y \in \Br(X\times Y) = \Br(X) \oplus \Br(Y)$. Then for all $x \in X$, the fiber $\kE|_{\{x\}\times Y}$ is $\omega_Y$-slope polystable.
\end{lem}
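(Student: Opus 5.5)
The plan is to pass from the projective hyperholomorphicity of $\kE$ to a Hermite--Einstein type connection on the fibres, using Theorem~\ref{thm_hyperholocharacterization} and Remark~\ref{rmk_polyiffendpoly}.

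First, by Definition~\ref{def_proj_hyperhol}, the Azumaya algebra $\cEnd(\kE)$ is $\omega_{X\times Y}$-polystable hyperholomorphic. Note that $\cEnd(\kE)$ is untwisted even though $\kE$ may be twisted. Theorem~\ref{thm_hyperholocharacterization} then gives a hermitian metric on $\cEnd(\kE)$ whose Chern connection $\nabla$ has $SU(2)$-invariant curvature $\Theta$, for the diagonal hyperk\"ahler structure on $X\times Y$.

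The key point is to restrict $\Theta$ to a slice $\{x\}\times Y$. The diagonal hyperk\"ahler metric is the product metric, and its complex structures $I,J,K$ are of the form $(I_X,I_Y)$, preserving the splitting $T(X\times Y)=TX\oplus TY$. Hence $SU(2)$-invariance of the $2$-form $\Theta$ means it is of type $(1,1)$ for every induced complex structure. Restricting to $TY$ (pulling back along the inclusion $i_x$, which is compatible with every induced complex structure), the $\End$-valued $2$-form $i_x^*\Theta$ is again $(1,1)$ for all complex structures of the twistor family of $(Y,\omega_Y)$, so it is $SU(2)$-invariant on $Y$. Therefore $i_x^*\nabla$ is a hyperholomorphic connection on $\cEnd(\kE)|_{\{x\}\times Y}=\cEnd(\kE_x)$.

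Here the hypothesis $\alpha_Y=0$ is what allows us to regard $\kE_x$ as an honest, untwisted vector bundle on $Y$. Since $\alpha_\kE=\pr_X^*\alpha_X$, the restriction $\kE|_{\{x\}\times Y}$ is untwisted up to choosing a trivialisation of the twist at the point $x$.

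Applying Theorem~\ref{thm_hyperholocharacterization} in the converse direction, on $Y$ with the complex structure $I_Y$, we get that $\cEnd(\kE_x)$ is $\omega_Y$-polystable hyperholomorphic, in particular $\omega_Y$-polystable. Remark~\ref{rmk_polyiffendpoly} then yields that $\kE_x$ is $\omega_Y$-slope polystable.

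I expect the main obstacle to be the restriction step. It requires checking carefully that the diagonal hyperk\"ahler structure of \cite[Section~5.3]{markman} is a genuine product, with matching quaternionic triples on $X$ and $Y$ and Kähler form $\pi_X^*\omega_X+\pi_Y^*\omega_Y$, so that $\{x\}\times Y$ is a trianalytic submanifold. Once that holds, the restriction of an $SU(2)$-invariant form is $SU(2)$-invariant, by the characterisation of invariant forms as those of type $(p,p)$ for all induced complex structures. A secondary subtlety is that Theorem~\ref{thm_hyperholocharacterization} is stated for compact hyperk\"ahler manifolds; this is satisfied by $Y$ with the restricted metric, which is the hyperk\"ahler metric in the class $\omega_Y$.
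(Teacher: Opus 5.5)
Your proposal is correct and follows essentially the same route as the paper. You get a hyperholomorphic connection on $\cEnd(\kE)$ from Theorem~\ref{thm_hyperholocharacterization}, restrict it to $\{x\}\times Y$ using that the diagonal hyperk\"ahler structure preserves the splitting $T_{X\times Y}=\pi_X^*T_X\oplus\pi_Y^*T_Y$, apply the characterisation again on $Y$, and conclude with Remark~\ref{rmk_polyiffendpoly}, using $\alpha_Y=0$ only to make the fibres untwisted. Your extra remarks on trianalyticity of the slice, and on the pulled-back Chern connection being the Chern connection of the restricted metric, make explicit what the paper uses implicitly.
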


\begin{proof}
We first observe that the condition $\alpha_Y=0$ implies that, for every
$x\in X$, the restriction $\kE|_{\{x\}\times Y}$ is untwisted. By assumption, the vector bundle $\cEnd(\kE)$ is $\omega_{X\times Y}$-polystable and hyperholomorphic. Hence, by Theorem~\ref{thm_hyperholocharacterization}, it admits an hermitian metric $h$ inducing a hyperholomorphic connection $\nabla$. The restriction $h_x$ on $\cEnd(\kE)|_{\{x\}\times Y}$ is also hermitian, and we denote by $\nabla_x$ the induced Chern connection. Since the hyperk\"ahler structure on $X\times Y$ is diagonal, the action of $SU(2)$ preserves the decomposition $T_{X\times Y} = \pi_X^*T_X \oplus \pi_Y^*T_Y$. Consequently, the curvature $F_{\nabla_x}$ is $SU(2)$-invariant - since $F_{\nabla}$ is - hence $\cEnd(\kE|_{\{x\}\times Y}) = \cEnd(\kE)|_{\{x\}\times Y}$ also admits a hyperholomorphic connection. Again by Theorem~\ref{thm_hyperholocharacterization}, we deduce that $\cEnd(\kE|_{\{x\}\times Y})$ is $\omega_Y$-slope polystable. The result then follows from Remark~\ref{rmk_polyiffendpoly}.
\end{proof}

\begin{rmk}\label{twistedmoduli}
    It is worth observing that Remark~\ref{rmk_polyiffendpoly} remains valid in the twisted setting. Let $E$ be a twisted vector bundle on a smooth projective manifold $Y$, and fix a K\"ahler class $\omega_Y$. Then $E$ is $\omega_Y$-slope polystable if and only if the untwisted vector bundle $\mathcal{E}nd(E)$ is $\omega_Y$-slope polystable. Indeed, by \cite[Lemma 2.1]{connectionontwistedhiggsbundles}, the twisted vector bundle $E$ is $\omega_Y$-polystable if and only if the $\PGL_r\times \mathbb{G}_m$-principal bundle $\mathbb{P}(E)\times_Y(\det E)^{\otimes r}$ is $\omega_Y$-polystable as a principal bundle (see also \cite[Definition~1.1]{anchouche-biswas-einstein-hermitian}). On the other hand, by \cite[Corollary~3.8]{anchouche-biswas-einstein-hermitian}, the vector bundle $\mathbb{P}(E)\times_Y(\det E)^{\otimes r}$ is $\omega_Y$-polystable if and only if its adjoint bundle is $\omega_Y$-polystable. Since $\ad(\mathbb{P}(E)\times_Y\det (E)^{\otimes r})\simeq \mathcal{E}nd(E)_0\oplus \mathcal{O}_Y\simeq \mathcal{E}nd(E)$, the claim follows.
    
    This observation allows one to prove Lemma~\ref{lem_fibersarestable} without the assuming $\alpha_Y=0$. Moreover, it suggests that the same construction in Theorem~\ref{thm_main} can also be used to exhibit components of moduli spaces of twisted sheaves which are IHS manifolds of $K3^{[n]}$-type.
\end{rmk}

\section{Variation of slope stability}\label{appslopestab}
In \cite[Section 5]{OGrady2026}, O'Grady constructs a wall-and-chamber decomposition of the K\"ahler cone of an IHS manifold in order to study variation of slope stability for modular sheaves. In the present paper, we need an analogous statement, valid without the modularity assumption: for a fixed family of torsion-free sheaves, the set of K\"ahler classes with respect to which all fibres are slope-stable is open (see Lemma~\ref{lem_stability_open}). Moreover, if this open subset is nonempty, then it contains an ample class (see Lemma~\ref{lem_omegaimpliesH}). 
We include short proofs of these two facts in order to keep the paper self-contained, following arguments along the lines of \cite{OGrady2026,greb_ross_toma_moduli}.

Let $Y$ be a projective IHS manifold. For $E,F\in\Coh(Y)$, we let
\[
    \lambda_{F,E}:=\rk(E)c_1(F)-\rk(F)c_1(E)
\]
Let $\omega\in \H^{1,1}(Y,\mathbb{R})$ be a K\"ahler class. A torsion-free coherent sheaf is $\omega$-slope stable if and only if for all coherent subsheaves $0\subsetneq F\subsetneq E$ such that $0<\rk(F)<\rk(E)$, we have
\[
b_Y(\lambda_{F,E},\omega)<0,
\]
where $b_Y$ is the BBF bilinear form (see \cite[Lemma 5.1]{OGrady2026}). For a K\"ahler class $\omega$, we set
\[
S^{r,\Delta}_\omega := \left\{E\;\;\middle|\;
\begin{array}{c}
\text{$E$ is $\omega$-slope stable torsion-free sheaves} \\
\text{of rank $r$ and discriminant $\Delta$}
\end{array}
\right\}.
\]
\begin{lem}\label{lem_stability_open}
    Let $Y$ be a projective IHS manifold and $\omega\in \H^{1,1}(Y,\RR)$ a K\"{a}hler class. Let $r\in \mathbb{N}_{>0}$ and $\Delta\in \H^4(Y,\Z)$. Then the class $\omega$ admits an open neighbourhood $V\subset\mathcal{K}_Y$ such that for all $\omega'\in V$, we have $S^{r,\Delta}_\omega\subset S^{r,\Delta}_{\omega'}$.
\end{lem}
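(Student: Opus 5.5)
I will follow the standard Bogomolov-type wall argument, adapted to the BBF form along the lines of \cite[Section 5]{OGrady2026}. The idea is that a destabilising subsheaf for a nearby class $\omega'$ must produce a class $\lambda_{F,E}$ with $b_Y(\lambda_{F,E},\omega)<0\le b_Y(\lambda_{F,E},\omega')$. Such a class defines a ``wall'' separating $\omega$ from $\omega'$. I will show that only finitely many walls meet a small compact neighbourhood of $\omega$ and that none of them passes through $\omega$; taking $V$ to avoid all of them then gives the statement.

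\textbf{Step 1: a Bogomolov-type bound.} Let $E\in S^{r,\Delta}_\omega$ and let $F\subset E$ be a saturated subsheaf with $0<\rk F<r$. Consider any K\"ahler class $\omega'$ for which $F$ is destabilising, i.e.\ $b_Y(\lambda_{F,E},\omega')\ge 0$, while $b_Y(\lambda_{F,E},\omega)<0$. By continuity there is a class $\omega_t$ on the segment $[\omega,\omega']$ with $b_Y(\lambda_{F,E},\omega_t)=0$. Replacing $F$ by the first term of the $\omega_t$-Harder--Narasimhan filtration of $E$ (or using that $E$ is $\omega_t$-semistable near the wall, by a limiting argument), I will reduce to the case where $E$ and $F$, $E/F$ are $\omega_t$-semistable of equal slope. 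Then the Bogomolov--Gieseker inequality for $F$ and $E/F$, integrated against $\omega_t^{2n-2}$, combined with the standard identity
\[
\frac{\Delta(E)}{r}=\frac{\Delta(F)}{\rk F}+\frac{\Delta(E/F)}{\rk(E/F)}-\frac{\lambda_{F,E}^2}{r\,\rk F\,\rk(E/F)},
\]
bounds $-\int\lambda_{F,E}^2\omega_t^{2n-2}$ above by a constant $C(r,\Delta,\omega_t)$. Since $\lambda_{F,E}\perp\omega_t$ and the BBF form is hyperbolic, Fujiki's relation (polarised) turns $\int\lambda^2\omega_t^{2n-2}$ into a positive multiple of $q(\lambda)q(\omega_t)^{n-1}$. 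This is the point where the IHS structure replaces modularity: $q(\lambda_{F,E})\ge -C'$, with $C'$ uniform for $\omega_t$ in a compact neighbourhood $K$ of $\omega$.

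\textbf{Step 2: finiteness of walls.} The classes $\lambda_{F,E}$ lie in the lattice $\NS(Y)$, which has signature $(1,\rho-1)$. The set of $\lambda\in\NS(Y)$ with $-C'\le q(\lambda)<0$ whose orthogonal hyperplane meets the compact set $K$ of positive classes is finite, by the usual argument: orthogonality to a positive class bounds the norm of $\lambda$ with respect to a fixed definite form. Hence there are finitely many walls $\lambda^\perp$ meeting $K$.

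\textbf{Step 3: conclusion.} Walls containing $\omega$ itself cannot occur, since on such a wall $b_Y(\lambda_{F,E},\omega)=0$, contradicting the strict stability of $E$ at $\omega$. So I take $V\subset K$ to be a small ball around $\omega$ disjoint from the finitely many walls. Then for every $\omega'\in V$ the segment $[\omega,\omega']$ crosses no wall, and every $E\in S^{r,\Delta}_\omega$ remains $\omega'$-stable.

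I expect the main obstacle to be Step 1, namely producing a semistable pair at the crossing class $\omega_t$ so that Bogomolov applies. I would first attempt this via the HN filtration at $\omega_t$, using the fact that the set of saturated subsheaves with bounded slope is bounded (Grothendieck's lemma) to control the relevant $c_1$'s uniformly on $K$.
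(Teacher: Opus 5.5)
Your proposal follows the paper's proof. The ingredients match: the walls $W_{F,E}=\lambda_{F,E}^\perp$, O'Grady's discriminant identity combined with Bogomolov at a point of strict semistability, the polarised Fujiki relation giving a uniform bound on $-q_Y(\lambda_{F,E})$, negative definiteness of $q_Y$ on $\alpha^\perp$ giving finitely many walls, and finally a small ball around $\omega$.

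The step you flag is exactly where the paper cites \cite[Lemma 6.2]{greb_toma} to get $\bar t$ with $E$ strictly $\omega_{\bar t}$-semistable. Of your two suggestions, only the first-failure-point (limiting) argument achieves this, using openness of stability for the single sheaf $E$, which is where Grothendieck's lemma enters. Replacing $F$ by the first Harder--Narasimhan term at an unstable $\omega_t$ gives neither equal slopes nor a semistable quotient $E/F$, so Bogomolov cannot be applied there.
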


\begin{proof}
    Let $B\subset \mathcal{K}_Y$ be an open ball with center $\omega$.
    Let $E\in S^{r,\Delta}_\omega$ and suppose it is unstable for some $w'\in B$.
    By \cite[Lemma 6.2]{greb_toma} there is a $\bar{t}\in (0,1]$ such that $E$ is $\bar{t}\omega+(1-\bar{t})\omega' $-slope strictly semistable. 
    Hence, the path $t\omega+(1-t)\omega', t\in [0,1]$  must cross a wall of the form
    \[
    W_{F,E} = \{\alpha\in \mathcal{K}_Y |\;\;b_Y(\lambda_{F,E}, \alpha) = 0\; \}
    \]
    where  $F\subset E$ is maximally $\bar{t}\omega+(1-\bar{t})\omega'$-destabilising (hence saturated).
    
    We now show that only a finite number of such walls intersect $B$. 
    It is enough to show that there are only a finite number of  $\lambda_{F,E}\in \NS(Y)$, where $E\in S^{r,\Delta}_\omega$ and
    $0\neq F\subsetneq E$ is a subsheaf with $\rk F<\rk E$ such that $b_Y(\lambda_{F,E}, \alpha)=0$ for some $ \alpha\in B$.
    Equivalently, picking a norm $\lVert \cdot\rVert$ on $\NS(Y)_\mathbb{R}$, it is enough to prove:
    \[
\sup \left\{ 
    \lVert \lambda_{F,E} \rVert 
    \;\middle|\; 
    \begin{aligned} 
        & E\in S^{r,\Delta}_\omega \\ 
        & E \text{ is } \alpha\text{-slope strictly semistable} \\[-0.5ex]
        & \text{for some } \alpha \in B, \text{ and } F \subset E \\[-0.5ex]
        & \text{is maximally destabilising} 
    \end{aligned} 
\right\} < \infty
\]
In fact, we always have $\lambda_{F,E}\in\NS(Y)$, which is discrete in $\NS(Y)_\mathbb{R}$, and a discrete and bounded subset is finite. 
Let us fix $\alpha, E,F$ be as above and consider $Q:=E/F$.  By \cite[Lemma 3.9]{OG22} we have the following 
\[
\frac{\Delta(E)}{r_E}+\frac{\left(\lambda_{F,E}\right)^2}{r_Er_Fr_Q}=\frac{\Delta(F)}{r_F}+\frac{\Delta(Q)}{r_Q}
\]
It is easy to check that both $F,Q$ are $\alpha$-slope semistable hence, by Bogomolov inequality (see \cite[Theorem 3.2]{pavel_toma_survey}), we obtain
\[
    \frac{\Delta(E)}{r_E}\cdot\alpha^{2n-2}+\frac{\left(\lambda_{F,E}\right)^2}{r_Er_Fr_Q}\cdot\alpha^{2n-2}=\frac{\Delta(F)}{r_F}\cdot\alpha^{2n-2}+\frac{\Delta(Q)}{r_Q}\cdot\alpha^{2n-2}\geq 0
\]
Since $\alpha$ belongs to a relatively compact subset, the quantity $\frac{\Delta(E)}{r_E}\cdot\alpha^{2n-2}$ is bounded from above, hence there is a constant $C$ depending only on $r,\Delta$ and $B$, such that
\[
-\left(\lambda_{F,E}\right)^2\cdot\alpha^{2n-2}< C.
\]
By the Fujiki identity, the positive constant $C':=\frac{(2n-2)!!}{2n-1}c_Y$ yields the inequality
\[
    \left(\lambda_{F,E}\right)^2\cdot\alpha^{2n-2}=C'q_Y(\lambda_{F,E})q_Y(\alpha)^{n-1}.
\]
Since $\overline{B}\subset\mathcal{K}_Y$ is compact, there exists $\epsilon>0$, such that $q_Y(\alpha)>\epsilon$ for all $\alpha\in B$. Combining this with the previous inequality, we obtain that there exists a constant $C''$ only depending on $r,\Delta$ and $B$ such that
\[
-q_Y(\lambda_{F,E})<C''
\]
This immediately implies the claim that $\lVert \lambda_{F,E}\rVert$ is uniformly bounded. In fact the signature of the BBF form tells us that 
\[
m_\alpha=\min\{-q_Y(\beta)\;|\;\beta\in\alpha^\perp,\; \lVert \beta\rVert= 1\}
\]
is positive, it depends continuously on $\alpha$ and $\alpha$ belongs to a relatively compact subset, there exists $C'''>0$ depending only on $B$ such that $m_\alpha>C'''$. 
Note that we can assume $\lambda_{F,E}\neq 0$ hence we have
\[
-q_Y(\lambda_{F,E})=-\lVert \lambda_{F,E}\rVert^2q_Y\left(\frac{\lambda_{F,E}}{\lVert \lambda_{F,E}\rVert}\right)>\lVert \lambda_{F,E}\rVert^2C'''
\]
Combining the inequalities above, we get
\[
\lVert \lambda_{F,E}\rVert^2<\frac{C''}{C'''}
\]
obtaining the required bound. 

We have just showed that  a finite number of walls intersect $B$. 
Hence by choosing $V$ to be a ball with center $\omega$ and sufficiently small radius we obtain the claim.
\end{proof}

An immediate Corollary of the previous Lemma is the following.
\begin{lem}\label{lem_omegaimpliesH}
    Let $\omega\in\mathcal{K}_Y$. Then there exists $H\in\NS(Y)$ ample such that $S^{r,\Delta}_\omega\subset S^{r,\Delta}_{H}$.
\end{lem}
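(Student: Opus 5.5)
The argument combines Lemma~\ref{lem_stability_open} with the density of rational classes in the Kähler cone of a projective IHS manifold. By Lemma~\ref{lem_stability_open}, applied to the given $r$ and $\Delta$, there is an open neighbourhood $V\subset\mathcal{K}_Y$ of $\omega$ such that $S^{r,\Delta}_\omega\subset S^{r,\Delta}_{\omega'}$ for every $\omega'\in V$. So it is enough to find an ample class $H\in\NS(Y)$ whose image in $\H^{1,1}(Y,\RR)$ lies in $V$, or at least a positive real multiple of one that does.

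First I would check that $\mathcal{K}_Y\cap\NS(Y)_\QQ$ is nonempty and dense in $\mathcal{K}_Y\cap\NS(Y)_\RR$. The subtle point is that $V$ is open in $\mathcal{K}_Y\subset\H^{1,1}(Y,\RR)$. If $\omega$ does not lie in $\NS(Y)_\RR$, then rational classes need not approach $\omega$ at all: $\NS(Y)_\RR$ is in general a proper subspace of $\H^{1,1}(Y,\RR)$, of dimension $\rho(Y)$ rather than $21$. This is the main obstacle, and there are two ways I would handle it.

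\textbf{Option (i).} Run the proof of Lemma~\ref{lem_stability_open} again, but along the segment from $\omega$ to an ample class $A$. That proof shows that only finitely many walls $W_{F,E}$, with $\lambda_{F,E}\in\NS(Y)$, meet a relatively compact region. For a wall to be relevant to $E\in S^{r,\Delta}_\omega$ we need $b_Y(\lambda_{F,E},\omega)<0$. Each linear functional $b_Y(\lambda,-)$ with $\lambda\in\NS(Y)$ factors through the orthogonal projection $p\colon\H^{1,1}(Y,\RR)\to\NS(Y)_\RR$ with respect to $b_Y$. This projection is well defined because the form is nondegenerate of signature $(1,\rho-1)$ on $\NS(Y)_\RR$ for $Y$ projective. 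Consequently the stability conditions on sub-objects depend only on $p(\omega)$.

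Next I would show that $p(\omega)$ lies in the positive cone of $\NS(Y)_\RR$ and pairs positively with an ample class. Taking $H$ to be a rational ample class close to $p(\omega)$ and applying Lemma~\ref{lem_stability_open}-type finiteness inside $\NS(Y)_\RR$ would then give $S^{r,\Delta}_\omega\subset S^{r,\Delta}_H$. Positivity in the relevant sense should come from the fact that $b_Y(\lambda,\omega)=b_Y(\lambda,p(\omega))$ for all $\lambda\in\NS(Y)$, together with the fact that the sign of $q_Y(p(\omega))$ is controlled. If $p(\omega)$ fails to be ample, I would argue instead via the convex segment $t\omega+(1-t)A$ and the finiteness of walls on this compact segment. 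That yields a real class in $V\cap(\text{span of }\omega,A)$ still equivalent to $\omega$ for slope stability, and one then perturbs within $\NS(Y)_\RR$.

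\textbf{Option (ii), the simpler route that suffices for the paper's application.} In the application (Theorem~\ref{thm_main}), the class $\omega_Y$ can be chosen in $\NS(Y)_\RR$. In that case $V\cap\NS(Y)_\RR$ is a nonempty open subset of the ample cone of $\NS(Y)_\RR$, so it contains a rational point. Clearing denominators gives an integral ample $H$, since slope stability is invariant under positive scaling, and $S^{r,\Delta}_\omega\subset S^{r,\Delta}_H$ follows at once.
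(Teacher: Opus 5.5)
There is a genuine gap. You correctly isolate the reduction the paper also uses. Every $\lambda_{F,E}$ lies in $\NS(Y)$. So writing $\omega=x+y$ with $x=p(\omega)\in\NS(Y)_\RR$ and $y\in T(Y)_\RR\cap\H^{1,1}(Y,\RR)$, one has $b_Y(\lambda_{F,E},\omega)=b_Y(\lambda_{F,E},x)$, and hence $S^{r,\Delta}_\omega\subset S^{r,\Delta}_x$, \emph{provided $x$ is a K\"ahler class}. You never establish that proviso.

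What you propose to show is that $x$ lies in the positive cone and pairs positively with an ample class. This is true: $q_Y(x)=q_Y(\omega)-q_Y(y)>0$ because $T(Y)_\RR\cap\H^{1,1}(Y,\RR)$ is negative definite, and $x+ty$, $t\in[0,1]$, stays in the positive cone. But it only places $x$ in the component of the positive cone containing $\mathcal{K}_Y$. On a projective IHS manifold the K\"ahler cone is in general a proper subcone of that component, cut out by walls coming from rational curves. So you cannot conclude that ample rational classes exist near $x$. Nor can you apply Lemma~\ref{lem_stability_open} at $x$: its statement and its Bogomolov step require a K\"ahler class.

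The missing idea is the Huybrechts--Boucksom criterion. A class in the positive component containing $\mathcal{K}_Y$ is K\"ahler if and only if it is positive on every rational curve $C$. Since $[C]$ is algebraic, $y\cdot C=0$, so $x\cdot C=\omega\cdot C>0$. Once $x\in\mathcal{K}_Y\cap\NS(Y)_\RR$ is known, your Option (ii) argument finishes the proof exactly as the paper does. The neighbourhood from Lemma~\ref{lem_stability_open} meets $\NS(Y)_\RR$ in a nonempty open set; pick a rational point there and clear denominators.

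Your fallbacks do not close the gap.
\begin{itemize}
\item On the segment $t\omega+(1-t)A$, the points close to $\omega$ (those inside $V$) have transcendental part $ty\neq 0$, bounded away from $\NS(Y)_\RR$, so there is no small perturbation of them into $\NS(Y)_\RR$.
\item Projecting these points gives $tx+(1-t)A$, which returns you to needing to know that $x$ is K\"ahler.
\item Going to small $t$, where the projection is ample, loses control of the walls between $\omega$ and $\omega_t$, so $S^{r,\Delta}_\omega\subset S^{r,\Delta}_{\omega_t}$ can fail.
\end{itemize}
Option (ii) treats only the special case $\omega\in\NS(Y)_\RR$, so it does not prove the lemma as stated. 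Its claim that $\omega_Y$ can be chosen algebraic in Theorem~\ref{thm_main} is also not provided by Theorem~\ref{thm_compatiblevectorbundle}, which only asserts the existence of some split K\"ahler class. Justifying that move would require the same ``projection of a K\"ahler class is K\"ahler'' argument above, on both $X$ and $Y$.
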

\begin{proof}
We decompose $\omega = x + y \in \H^{1,1}(Y,\mathbb{R}) = \NS(Y)_{\mathbb{R}} \oplus \big(T(Y)_{\mathbb{R}} \cap \H^{1,1}(Y,\mathbb{R})\big)$. 

We will now apply the Huybrechts--Boucksom criterion,  \cite[Théorème 1.2]{boucksom}, to prove that $x$ also lies in $\mathcal{K}_Y$. In fact $q_Y(\omega)=q_Y(x)+q_Y(y)$ and $T(Y)$ is negative definite, hence $q_Y(\omega)>0$ implies that $q_Y(x)>0$. In particular, $x\neq 0$. 
We consider the path $\omega_t=x+ty$.
We observe that $q_Y(\omega_t)>0$ for all $t\in [0,1]$: indeed, 
\[q_Y(\omega_t)=q_Y(x)+t^2q_Y(y)\ge q_Y(x)+q_Y(y)=q_Y(\omega)>0,\]
hence $x$ and $\omega$ are in the same connected component of the cone $\{\alpha\in \H^{1,1}(Y,\RR)|\;q_Y(\alpha)>0\}$. 
By the Huybrechts-Boucksom criterion  it suffices to check that for any rational curve $C\subset Y$ we have $C.x\geq 0$.
Since $y$ is transcendental, we have $C.y=0$ hence $0\leq \omega.C=x.C+y.C=x.C$.

It is straightforward to check that $S^{r,\Delta}_\omega\subset S^{r,\Delta}_{x}$. By Lemma \ref{lem_stability_open}, there exists $x'\in\NS_{\mathbb{Q}}(Y)$ such that $S^{r,\Delta}_{x}\subset S^{r,\Delta}_{x'}$. For an integer $m\gg 0$, we have $H:=mx'\in\NS(Y)$ and it is ample. It follows that $S^{r,\Delta}_{x}\subset S^{r,\Delta}_{x'}=S^{r,\Delta}_{H}$.
\end{proof}

\end{document}